\def\arxivmode{1}
\def\fastmode{0}
\def\showauthornotes{1}
\def\showkeys{0}
\def\showdraftbox{1}
\def\showcolorlinks{1}
\def\showfixme{1}
\newtheorem{theorem}{Theorem}[section]
\newtheorem*{theorem*}{Theorem}
\newtheorem{proposition}[theorem]{Proposition}
\newtheorem*{proposition*}{Proposition}
\newtheorem{lemma}[theorem]{Lemma}
\newtheorem*{lemma*}{Lemma}
\newtheorem*{conjecture*}{Conjecture}
\newtheorem*{fact*}{Fact}
\newtheorem*{exercise*}{Exercise}
\newtheorem*{hypothesis*}{Hypothesis}
\theoremstyle{definition}
\newtheorem{definition}[theorem]{Definition}
\newtheorem*{question*}{Question}
\newtheorem{exercise-easy}[theorem]{Exercise}
\newtheorem{exercise-med}[theorem]{Exercise}
\newtheorem{exercise-hard}[theorem]{Exercise$^\star$}
\newtheorem*{claim*}{Claim}
\newtheorem{remark}[theorem]{Remark}
\newtheorem*{remark*}{Remark}
\newtheorem*{observation*}{Observation}
\let\mathbb\varmathbb
\definecolor{bleudefrance}{rgb}{0.01, 0.1, 1.0}
\definecolor{azure}{rgb}{0.0, 0.5, 1.0}
\newcommand{\savehyperref}[2]{\texorpdfstring{\hyperref[#1]{#2}}{#2}}
\newcommand{\Sref}[1]{\hyperref[#1]{\S\ref*{#1}}}
\newcommand{\mynotes}[1]{{\sffamily\small\color{teal}{#1}}\medskip}
\newcommand{\Authornote}[2]{{\sffamily\small\color{Maroon}{[#1: #2]}}\medskip}
\newcommand{\Authornotecolored}[3]{{\sffamily\small\color{#1}{[#2: #3]}}}
\newcommand{\Authorcomment}[2]{{\sffamily\small\color{gray}{[#1: #2]}}}
\newcommand{\Authorstartcomment}[1]{\sffamily\small\color{gray}[#1: }
\newcommand{\Authorfnote}[2]{\footnote{\color{red}{#1: #2}}}
\newcommand{\Authorfixme}[1]{\Authornote{#1}{\textbf{??}}}
\newcommand{\Authormarginmark}[1]{\marginpar{\textcolor{red}{\fbox{\Large #1:!}}}}
\newcommand{\myexplain}[1]{{\sffamily\small\color{red}{\noindent [Explanation:\medskip\newline \begin{quote}#1\hfill]\end{quote}}}\medskip}
\newcommand{\explain}[1]{{\sffamily\small\color{red}{#1}}\medskip}
\newcommand{\mynotes}[1]{}
\newcommand{\Authornote}[2]{}
\newcommand{\Authornotecolored}[3]{}
\newcommand{\Authorcomment}[2]{}
\newcommand{\Authorstartcomment}[1]{}
\newcommand{\Authorfnote}[2]{}
\newcommand{\Authorfixme}[1]{}
\newcommand{\Authormarginmark}[1]{}
\newcommand{\myexplain}[1]{}
\newcommand{\explain}[1]{}
\renewcommand{\myexplain}[1]{{\sffamily\small\color{red}{\noindent \begin{quote}{\bf Explanation:} \medskip\newline #1\end{quote}}}\medskip}
\newcommand{\abs}[1]{\lvert#1\rvert}
\newcommand{\Abs}[1]{\left\lvert#1\right\rvert}
\newcommand{\Norm}[1]{\left\lVert#1\right\rVert}
\newcommand{\E}{\mathbb{E}\xspace}
\renewcommand{\P}{\mathbb{P}\xspace}
\newcommand{\Var}{\mathbf{Var}\xspace}
\newcommand{\Ex}[1]{\E\left[#1 \right]}
\newcommand{\Px}[1]{\P\left(#1 \right)}
\newcommand{\textparen}[1]{\text{(#1)}}
\newcommand{\because}[1]{\textparen{because #1}}
\renewcommand{\because}[1]{\textparen{because #1}}
\newcommand\bdot\bullet
\def\UD{\mathsf{Undesirable}}
\newcommand{\R}{\mathbb R}
\newcommand{\cE}{\Even}
\newcommand{\cH}{\mathcal H}
\newcommand{\cL}{\mathcal L}
\newcommand{\cN}{\mathcal N}
\newcommand{\cO}{\Odd}
\newcommand{\cP}{\mathcal P}
\newcommand{\cQ}{\mathcal Q}
\newcommand{\cZ}{\mathcal Z}
\renewcommand{\leq}{\leqslant}
\renewcommand{\le}{\leqslant}
\renewcommand{\geq}{\geqslant}
\renewcommand{\ge}{\geqslant}
\let\epsilon=\varepsilon
\numberwithin{equation}{section}
\newcommand{\addreferencesection}{
  \phantomsection
\ifnum\stocmode=0
  \addcontentsline{toc}{section}{References}
\else
  \addcontentsline{toc}{section}{References \hspace*{1in} --------- End of extended abstract ---------}
\fi

}
\newcommand{\sse}{\subseteq}
\newcommand{\eps}{\epsilon}
\def\SP{\mathsf{SingletonPolymers}}
\newcommand{\Q}{Q_{d}}
\newcommand{\Qp}{Q_{d, p}}
\newcommand{\Cnt}{\cZ_{d, p}}
\newcommand{\Cntl}{\cZ_{d, p, \lam}}
\newcommand{\Cnts}{\widehat{\cZ_{d, p}}}
\newcommand{\Cntsl}{\widehat{\cZ_{d, p, \lam}}}
\newcommand{\Good}{\mathsf{Good}}
\newcommand{\Bad}{\mathsf{Bad}}
\newcommand{\Col}{\mathsf{Collide}}
\newcommand{\Rep}{\mathsf{Repeat}}
\newcommand{\Nbr}{\mathsf{Neighbor}}
\newcommand{\N}{N_p}
\newcommand{\Win}{\mathsf{Window}}
\newcommand{\Even}{\mathsf{Even}}
\newcommand{\Odd}{\mathsf{Odd}}
\newcommand{\EM}{\mathsf{EvenMinority}}
\newcommand{\OM}{\mathsf{OddMinority}}
\newcommand{\ESmall}{\mathsf{Even}_{\mathsf{Small}}}
\newcommand{\OSmall}{\mathsf{Odd}_{\mathsf{Small}}}
\renewcommand{\c}{\mathsf{c}}
\newcommand{\TV}[1]{\Norm{#1}_{\mathrm{TV}}}
\newcommand{\Nice}{\mathsf{Ideal}}
\newcommand{\NiceE}{\Nice^\Even}
\newcommand{\NiceO}{\Nice^\Odd}
\newcommand{\AS}{\mathsf{ApproxSampler}}
\newcommand{\US}{\mathsf{UniformSampler}}
\def\ASE{\AS^{\Even, \Good}}
\def\DS{\mathsf{DefectSize}}
\newcommand{\Ep}{\E\Phi}
\renewcommand{\hat}[1]{\widehat{#1}}
\newcommand{\Fr}[2]{\left(\frac{#1}{#2}\right)}
\newcommand{\fr}[2]{\frac{#1}{#2}}
\newcommand{\f}[2]{\frac{#1}{#2}}
\renewcommand{\bar}[1]{\overline{#1}}
\newcommand{\df}{\coloneq}
\newcommand{\Exp}[1]{\exp\left(#1\right)}
\newcommand{\lam}{\lambda}
\renewcommand{\t}{\theta}
\newcommand{\sig}{\sigma}
\newcommand{\g}{\gamma}
\newcommand{\del}{\delta}
\newcommand{\om}{\omega}
\newcommand{\ze}{\zeta}
\newcommand{\Ber}{\mathrm{Bernoulli}}
\newcommand{\Bin}{\mathrm{Binomial}}
\newcommand{\Pois}{\mathrm{Poisson}}
\newcommand{\Nor}[2]{\mathrm{Normal}\left(#1, #2\right)}
\newcommand{\Cov}{\mathbf{Cov}}
\newcommand{\weakto}{\xlongrightarrow{\mathrm{d}}}
\newcommand{\pto}{\xlongrightarrow{\mathbb{P}}}
\renewcommand{\Box}[1]{\left[#1\right]}
\newcommand{\Rnd}[1]{\left(#1\right)}
\newcommand{\Bra}[1]{\left\{#1\right\}}
\newcommand{\mnorm}[1]{{\left\vert\kern-0.25ex\left\vert\kern-0.25ex\left\vert #1 
		\right\vert\kern-0.25ex\right\vert\kern-0.25ex\right\vert}}
\newcommand{\1}[1]{\mathbf{1}_{\Bra{#1}}}
\begin{document}

\title[Independent sets of a percolated hypercube]{Gaussian to Log-Normal transition for independent sets in a percolated hypercube}

\author{Mriganka Basu Roy Chowdhury, Shirshendu Ganguly and Vilas Winstein}
\address{Mriganka Basu Roy Chowdhury\\ University of California, Berkeley}
\email{mriganka\_brc@berkeley.edu}
\address{Shirshendu Ganguly\\ University of California, Berkeley}
\email{sganguly@berkeley.edu}
\address{Vilas Winstein\\ University of California, Berkeley}
\email{vilas@berkeley.edu}

\begin{abstract} Independent sets in graphs, i.e., subsets of vertices where no two
	are adjacent, have long been studied, particularly as a model of hard-core
	gas in various settings such as lattices and random graphs. 
	The $d$-dimensional hypercube, $\{0,1\}^d$, with the nearest neighbor
	structure, has been a particularly appealing choice for the base graph, owing
	in part to its many symmetries, as well as its connections to binary codes.
	Results go back to the work of Korshunov and Sapozhenko \cite{sapozhenko} 
	who proved sharp
	results on the count of such sets as well as structure theorems for random
	samples drawn uniformly. This was later extended by Galvin \cite{g11} 
	to the hard-core model, where the probability of an independent
	set $I$ is proportional to $\lambda^{|I|}$ where $\lambda>0$ is the
	\emph{fugacity}.

	Of much interest is the behavior of such Gibbs measures in the presence of
	disorder. In this direction, Kronenberg and Spinka \cite{ks} initiated the study of the count and
	geometry of independent sets in a random subgraph of the hypercube obtained
	by considering an instance of bond percolation with probability $p.$
	Relying on tools from statistical mechanics, such as polymer decompositions
	and cluster expansions, they obtained a detailed understanding of the
	moments of the partition function, say $\cZ,$ of the hard-core model on such
	random graphs and consequently deduced certain fluctuation information, as
	well as posed a series of interesting questions. A particularly important
	outcome of their computations is that, in the uniform case, there is a
	natural phase transition at $p=\f 23$ where $\cZ$ transitions from being
	concentrated for $p>\f 23$ to not concentrated at $p=\f 23$ in the sense that
	the second moment transitions from being much smaller than the square of the expectation
	to being comparable at $p=\f23$, to being exponentially larger for $p>\f23.$
 
	In this article, developing a probabilistic framework, as well
	as relying on certain cluster expansion inputs from \cite{ks}, we present a
	detailed picture of both the fluctuations of $\cZ$ as well as the geometry of
	a randomly sampled independent set. In particular, we establish that $\cZ,$
	properly centered and scaled, converges to a standard Gaussian for $p>\f 23,$
	and to a sum of two i.i.d. log-normals at $p=\f 23.$ 
A particular step in the proof which could be of independent interest involves a non-uniform birthday problem for which
collisions emerge at $p=\f 23,$ establishing this as another natural transition point.
	Finally, our results also generalize
	to the hard-core model with parameter $\lambda,$ establishing a similar
	transition at $\frac{(1+\lambda)^2}{2\lambda(2+\lambda)}.$   
\end{abstract}

\maketitle

\setcounter{tocdepth}{1}
\tableofcontents

\section{Introduction}
An independent set in a graph $G=(V,E),$ is a subset of the vertices of which no two are neighbors. A canonical model of
interest in statistical mechanics, they arise as a model of hard-core lattice gas or in the study of glassy properties of
packed hard-spheres as well as a toy model for the antiferromagnetic Ising model. As is common in statistical mechanics,
often a useful framework to study such objects is by considering a suitable Gibbs measure defined on them. This is known
as the \emph{hard-core model} indexed by the \emph{fugacity} $\lambda >0$. Given
$G$, the hard-core model assigns a probability on any independent set $I\sse V$, is 
\begin{equation}\label{hardcoredef342}
\frac{\lambda^{|I|}}{\cZ},
\end{equation}
where $|I|$ denotes the size of $I$ and $\cZ$, the normalizing constant, is termed as the partition function. A central
question for any such Gibbs measure is to compute this partition function which is rich in information about the
structure of the underlying measure. Note that when $\lambda=1$, the partition function is simply the count of the
number of independent sets that the graph admits. While the problem of independent sets can be studied for any graph, of
particular interest in both mathematics and computer science  are the cases when the underlying graph is a lattice or a
random graph. The hard-core models in both these cases are known to exhibit a rich structure and phase transitions (often
multiple) as the fugacity $\lambda$  is varied. On the lattice, owing to its bipartite structure, as $\lambda$ increases
to infinity, the geometry of the independent set adopts a more checkerboard appearance approaching its ground state, see
for instance, \cite{g04}. The random graph setting falls in the paradigm of random constraint satisfaction problems (the
random instance given by the underlying graph). A remarkable phase transition in the geometry of the set of independent
sets has been established over the years exhibiting various phenomena ranging from the replica symmetric regime where
the Gibbs measure behaves as a product measure, to  condensation and shattering as the density of the independent set is
varied, where the set of all independent sets breaks up into various clusters. See, for instance, 
\cites{dingslysunregular, c15, b13} for state of the art results in this direction. The powerful hypergraph container method was introduced in the seminal works \cite{hypergraph_containers, balogh2015independent} to study independent sets in hypergraphs.

The particular case treated in this paper is a combination of the above two setups. The base graph considered is the
high dimensional lattice given by the hypercube $\Q\df\{0,1\}^d$, endowed with the nearest-neighbor graph structure,
where two vertices $u$ and $v$ are neighbors if and only if they differ at exactly one bit. Owing to the various
symmetries of $\Q$, the independent set problem in this setting has received quite a lot of attention over the years.
The articles \cites{j22,g11, perkins} and the references therein should provide a comprehensive account. It is also
related to the problem of binary codes at distance two, see for instance \cite{sapozhenko} making it of wider
interest.  As a consequence, the current understanding of the count of independents sets as well as their typical
structure is quite advanced which we review briefly next.

Results go back to the beautiful work of Korshunov and
Sapozhenko \cite{sapozhenko} who computed the leading behavior of the count of
independent sets, say $\cZ_{d}$, showing that as $d \to \infty$,
\begin{equation}\label{p1count}
    \cZ_{d} = 2^{2^{d - 1}} \cdot 2 \cdot \sqrt{e} \cdot (1 + o(1)),
\end{equation}
see also \cite{galvinind} for a more recent exposition of the proof. This was greatly generalized by Jenssen and Perkins
\cite{perkins}, who employed tools from statistical mechanics such as cluster expansions and \emph{theory of abstract polymer models}, extracting finer corrections to this count for arbitrary {fugacities} $\lam$ in the corresponding hard-core model.
For structural results, note that $\Q$ admits a natural
bipartition $\Q = \cE \cup \cO$ depending on whether the vertex has even or odd Hamming weight, respectively.
Kahn \cite{kahn}
showed that for constant $\lambda$, typical independent sets drawn from $\mu$ contain either mostly
even vertices or mostly odd vertices, and thus the hard-core model on $\Q$ exhibits a
kind of ‘phase coexistence’ as alluded to in the beginning of this discussion. 
In particular, for a uniformly chosen independent set, corresponding to $\lambda=1$, most vertices land on the
\emph{majority} side with the opposite side featuring as the \emph{minority} side.

While the story of the study of independent subsets of $\Q$ is significantly richer than the scope of the quick review
we have attempted here, this paper focuses on how such Gibbs measures behave in the presence of disorder or impurities.

This general program of studying the effect of disorder on Gibbs measures can be traced back to the study of spin glasses in the context of the Ising model (see \cite{panchenko2013sherrington} for instance).  For the independent set problem on $\Q$, a
natural way to introduce disorder is to alter the underlying geometry by considering an instance of bond percolation
where every edge is retained independently with probability $p.$ We will denote the random subgraph obtained this way as
$\Qp.$ The study of the independent set problem on $\Qp$ was initiated in the recent work of Kronenberg and Spinka
\cite{ks}. The primary object of interest then becomes $\Cnt$, the number of independent sets that $\Qp$ admits. {
	To understand $\Cnt$, a quenched quantity which is a function of the percolation instance, \cite{ks} considers the annealed model, i.e., obtained by averaging over over the percolation configuration. An analysis of the annealed model is then performed using ideas from statistical mechanics related to the ones appearing in the aforementioned article \cite{perkins}. The observation that the annealed model admits tractable cluster expansions allows them to compute moments of $\Cnt$ via computing the partition functions
	for increasingly complex polymer models.
	Relying on this, they prove the following result, analogous to \eqref{p1count}:
}

\begin{align}\label{expectation12}
    \E \Cnt = 2^{2^{d - 1}} \cdot 2 \cdot \Exp{\f{1}{2} (2 - p)^d \cdot (1 + o(1))}.
\end{align}
Further, their control on the moments also allow them to deduce certain fluctuation statements such as the following.
\begin{theorem*}\cite{ks}*{Theorem 1.2} For any fixed $p \geq \f 23$,
    $$\Cnt= 2^{2^{d - 1}} \cdot 2 \cdot \Exp{\f{1}{2} (2 - p)^d}\cdot\left(1+\Rnd{2-\frac{3p}{2}}^{d/2}X_d \right),$$
where $X_d$ is tight as $d\to \infty.$  Further, $X_d,$ does not converge to zero. 
\end{theorem*}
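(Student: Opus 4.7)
My plan is to prove the theorem by a direct second- and fourth-moment analysis of $\Cnt$. Let $\mu_d := 2 \cdot 2^{2^{d-1}} \exp(\tfrac{1}{2}(2-p)^d)$ denote the prefactor appearing in the statement, which by \eqref{expectation12} agrees with $\E \Cnt$ up to a $(1+o(1))$ multiplicative factor, and set $X_d := (2 - \tfrac{3p}{2})^{-d/2}(\Cnt / \mu_d - 1)$. The theorem reduces to the two moment estimates (i) $\sup_d \E X_d^2 < \infty$, and (ii) $\liminf_d \E X_d^2 > 0$ together with $\sup_d \E X_d^4 < \infty$. Indeed, (i) gives tightness of $X_d$ by Chebyshev, while (ii) combined with the Paley--Zygmund inequality
\[
\P\!\big(X_d^2 \geq \tfrac{1}{2} \E X_d^2\big) \;\geq\; \frac{(\E X_d^2)^2}{4\, \E X_d^4}
\]
forces $\liminf_d \P(|X_d| \geq \varepsilon) > 0$ for some $\varepsilon > 0$, ruling out $X_d \to 0$ in probability.

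The computational heart is the second moment. Averaging over the percolation yields
\[
\E \Cnt^2 \;=\; \sum_{I, J \subseteq V(\Q)} (1-p)^{|E(I) \cup E(J)|},
\]
where $E(S)$ denotes the set of edges of $\Q$ with both endpoints in $S$. I would expand this sum around the four ground-state pairs in which each of $I, J$ is contained in one of the bipartite classes $\cE, \cO$. The pure ground-state contribution factorizes as $(\E \Cnt)^2 (1+o(1))$; the relevant correction comes from \emph{defects}, i.e., minority-side vertices included in $I$ or $J$. The dominant interaction is a \emph{shared} defect vertex $v$ appearing in both $I$ and $J$ within the same ground state: summing over the two unknown neighbor subsets $S, T \subseteq N(v)$ gives
\[
\sum_{S, T \subseteq N(v)} (1-p)^{|S \cup T|} \;=\; (4-3p)^d,
\]
and combining with the $2^{d-1}$ choices for $v$ and the $2^{2d}$ normalization of the unrestricted neighbors produces an extra contribution of order $(2 - \tfrac{3p}{2})^d \cdot (\E \Cnt)^2$. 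Invoking the polymer/cluster expansion of \cite{ks} applied to a pair-polymer model whose polymer weights encode joint defect configurations of $(I, J)$, one absorbs the higher-order defect interactions into a convergent series valid for $p \geq \tfrac{2}{3}$, yielding
\[
\frac{\Var \Cnt}{(\E \Cnt)^2} \;=\; c_p\, (2 - \tfrac{3p}{2})^d\, (1 + o(1))
\]
for an explicit $c_p > 0$. This proves (i) and the lower bound on $\E X_d^2$ in (ii).

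For the fourth-moment bound in (ii), I would run the analogous expansion on a quadruple polymer model encoding joint defect configurations of four independent sets. The dominant contributions to $\E(\Cnt - \mu_d)^4$ are Wick-type pairings of the shared-defect interactions identified above, giving $\E(\Cnt - \mu_d)^4 = O\!\big((2-\tfrac{3p}{2})^{2d}\, \mu_d^4\big)$ and hence $\sup_d \E X_d^4 < \infty$. The main obstacle in the argument is establishing convergence of these cluster expansions uniformly in $d$: this boils down to verifying a Koteck\'y--Preiss-type condition on the pair and quadruple polymer models, which holds precisely because a single shared defect carries weight of order $(2 - \tfrac{3p}{2})^d \leq 1$ in the regime $p \geq \tfrac{2}{3}$. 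The threshold $p = \tfrac{2}{3}$ thus emerges intrinsically as the value at which shared defects cease to be rare, matching the non-uniform birthday-problem heuristic emphasized in the abstract.
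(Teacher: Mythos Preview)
The statement you are proving is Theorem 1.2 of \cite{ks}, which this paper \emph{quotes} rather than proves. The present paper instead establishes the much sharper Theorem~\ref{thm:main}, which identifies the distributional limit of $X_d$ (Gaussian for $p>\tfrac23$, a shifted sum of log-normals at $p=\tfrac23$) and in particular immediately implies both tightness and non-degeneracy. So there is no ``paper's own proof'' of this exact statement to compare against beyond noting that it is subsumed by Theorem~\ref{thm:main}.

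Your moment-based route is essentially the original approach of \cite{ks}, not the approach of this paper. The paper deliberately avoids computing higher moments of $\Cnt$ directly: instead it reduces $\Cnt$ (up to negligible error) to $2^{2^{d-1}}\bigl(\exp(\Phi_\Even)+\exp(\Phi_\Odd)\bigr)$ via cluster-expansion input (Lemma~\ref{lem:iop_polymer}) and the birthday-problem analysis (Lemma~\ref{lem:iop_birthday}), and then proves a joint CLT for $(\Phi_\Even,\Phi_\Odd)$ by Stein's method (Lemma~\ref{lem:jointclt}). This buys an explicit limit law, whereas your second/fourth-moment plus Paley--Zygmund argument only yields tightness and non-degeneracy. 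Conversely, your approach is more self-contained and bypasses the birthday-problem machinery.

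One genuine point to tighten in your sketch: you center by $\mu_d = 2\cdot 2^{2^{d-1}}\exp\!\bigl(\tfrac12(2-p)^d\bigr)$ rather than by $\E\Cnt$, and these differ by a $1+o(1)$ factor, so $\E X_d \neq 0$. Since $\E X_d^2 = \Var(\Cnt)/\mu_d^2 \cdot (2-\tfrac{3p}{2})^{-d} + (\E\Cnt/\mu_d - 1)^2(2-\tfrac{3p}{2})^{-d}$, bounding $\E X_d^2$ also requires $\E\Cnt/\mu_d - 1 = O\bigl((2-\tfrac{3p}{2})^{d/2}\bigr)$. This is true---the leading correction to $\log\E\Cnt$ beyond $\tfrac12(2-p)^d$ comes from size-$2$ polymers and is $O\bigl(d^{O(1)} 2^d(1-p/2)^{2d}\bigr)$, which one checks is $o\bigl((2-\tfrac{3p}{2})^{d/2}\bigr)$ for all $p\ge\tfrac23$---but it is a separate estimate that your outline does not mention and that does not follow from the variance computation alone.
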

While \cite{ks} also deals with $p$ depending on $d,$ we will simply quote their results for the fixed $p$ case.  \\

\begin{figure}[ht]
\centering
\includegraphics[scale=.08]{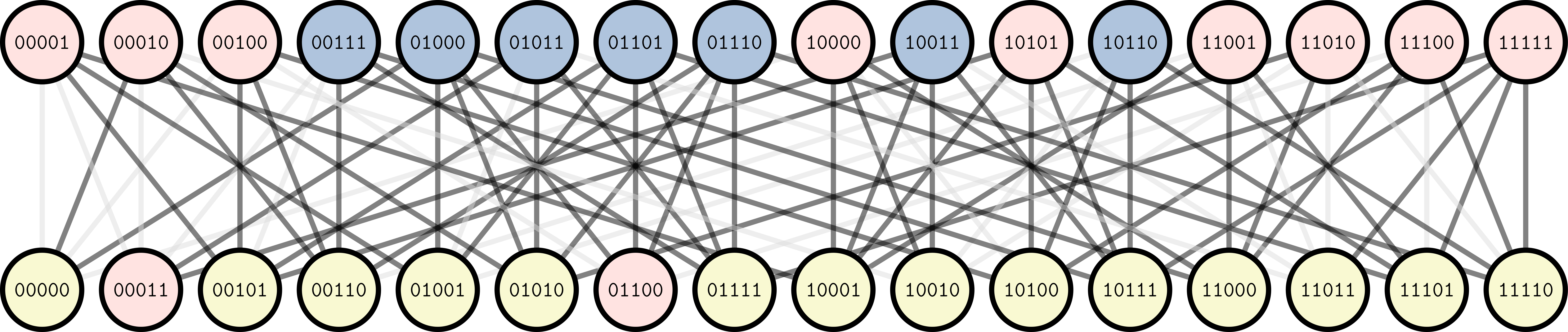}
\caption{An illustration of $\Qp$ (open edges in black) with $\Even$ (yellow) and $\Odd$ (blue) sides. Vertices in pink form an independent
	set.
}
\label{illus1234}
\end{figure}

Note that the above implies a natural phase transition at $p=\f 23,$ where for
$p> \f 23$, since $(2-\f{3p}{2})<1,$ we see that $\Cnt$ is concentrated, 
while for $p=\f23$ it is not. Further, they prove that the variance
transitions from being much smaller than the square of the expectation for
$p>\f23$ (also implying concentration), to being comparable at $p=\f23$, to being
much larger than, for $p<\f23.$ 

While it turns out that moment estimates of $\Cnt,$ in general, will not be particularly useful to capture fluctuations or
typical behavior, they indeed suffice to yield surprisingly sharp results when $p$ is particularly close to $1$ \emph{as a function
of $d$}. In this vein, they had established a Gaussian fluctuation result of the following kind.

\begin{theorem*}\cite{ks}*{Theorem 1.3}\label{clt1} For $p=1-o(1)$ such that $p\le 1-2^{-d/3+\omega(1)},$
$$\frac{\Cnt-\E\Cnt}{\sqrt{\Var(\Cnt)}}\weakto \Nor{0}{1},$$
where $\weakto$ denotes convergence in distribution.
\end{theorem*}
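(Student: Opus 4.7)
The plan is to write $\Cnt - \E\Cnt$ as a polynomial chaos in the edge variables of $\Qp$, show that the linear part captures essentially all of the variance, and conclude via the classical CLT applied to the linear part. Let $X_e := \mathbf{1}[e \in \Qp]$ and $Z_e := p - X_e$, so that $(Z_e)_{e \in E(\Q)}$ is a family of independent centered random variables with common variance $pq$, where $q := 1-p$. For any $I \subseteq V(\Q)$, $\mathbf{1}[I \text{ indep in } \Qp] = \prod_{e \in E(\Q[I])}(q + Z_e)$; expanding and summing over $I$ yields
\begin{align*}
\Cnt - \E\Cnt \;=\; \sum_{\emptyset \neq S \subseteq E(\Q)} C_S \prod_{e \in S} Z_e, \qquad C_S \;:=\; \sum_{I \supseteq V(S)} q^{e(I) - |S|},
\end{align*}
where $e(I) := |E(\Q[I])|$ and $V(S)$ is the set of vertices spanned by $S$. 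By pairwise $L^2$-orthogonality of the monomials $\prod_{e \in S} Z_e$, $\Var(\Cnt) = \sum_{\emptyset \neq S} C_S^2 (pq)^{|S|}$.

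Let $L := \sum_e C_e Z_e$ denote the order-one part. Because $\Q$ is edge-transitive, $C_e$ is the same for every edge; denote this common value by $C_\star$, so $L = C_\star \sum_e Z_e$. Under $q \ge 2^{-d/3 + \omega(1)}$ we have $N \cdot pq \to \infty$ with $N := |E(\Q)| = d \cdot 2^{d-1}$, and the classical CLT for sums of i.i.d.\ bounded centered random variables yields $L / \sqrt{\Var(L)} \weakto \Nor{0}{1}$. The theorem therefore reduces to the variance comparison
\begin{align*}
\sum_{|S| \ge 2} C_S^2 (pq)^{|S|} \;=\; o\!\Paren{pq \sum_{e \in E(\Q)} C_e^2},
\end{align*}
i.e., the assertion that the linear chaos captures the variance asymptotically.

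This variance reduction is the main obstacle. The strategy is a combinatorial estimate on $C_S$: for $q$ small the dominant contribution to $C_S$ comes from $I \supseteq V(S)$ with $e(I)$ equal (or very close) to $|S|$, i.e., $I = V(S) \cup J$ with $J$ an independent set in the sub-hypercube $\Q \setminus (V(S) \cup N(V(S)))$. Exploiting that adjacent vertices of $\Q$ have disjoint neighborhoods, together with Korshunov--Sapozhenko asymptotics applied to such sub-hypercubes, I expect for well-spread $S$ of size $k$ a bound $C_S \lesssim 2^{2^{d-1} - kd + O(k)}$, which makes the order-$k$ chaos contribution an $\Paren{O(qd \cdot 2^{-d})}^{k-1}$ multiple of $\Var(L)$; summing in $k$ then closes the loop. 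The delicate case is that of $S$ whose edges share vertices or form dense clusters (so $|V(S)| < 2|S|$), where the combinatorics are subtler and the precise threshold $q \ge 2^{-d/3 + \omega(1)}$ enters; to obtain a uniform bound here I would import the polymer-model and cluster-expansion estimates developed in \cite{ks}, which provide exactly the control on overlapping configurations needed to make the geometric sum convergent.
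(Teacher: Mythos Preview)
This theorem is not proved in the present paper; it is quoted verbatim from Kronenberg--Spinka \cite{ks} as background. The paper explicitly says that their ``control on the moments \dots\ allow them to deduce certain fluctuation statements'' and that ``moment estimates of $\Cnt$ \dots\ suffice to yield surprisingly sharp results when $p$ is particularly close to $1$.'' So the approach in \cite{ks} is the method of moments: compute $\E[(\Cnt - \E\Cnt)^k]$ for each fixed $k$ via the cluster expansion for the $k$-replica polymer model, and verify that (after normalization) these match the Gaussian moments. Your chaos/Hoeffding decomposition is a genuinely different route.

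That said, your proposal has real gaps rather than just a different flavor. First, the variance comparison $\sum_{|S|\ge 2} C_S^2 (pq)^{|S|} = o(\Var(L))$ is the entire content of the theorem, and you do not prove it: the bound $C_S \lesssim 2^{2^{d-1} - kd + O(k)}$ is asserted only heuristically for ``well-spread'' $S$, and for clustered $S$ you explicitly punt back to ``importing the polymer-model and cluster-expansion estimates developed in \cite{ks}.'' At that point you are no longer giving an alternative argument---you are assuming the hard combinatorial input of the original proof and repackaging it. Second, your explanation of where the threshold $q \ge 2^{-d/3 + \omega(1)}$ enters is wrong: you attribute it to the Lindeberg-type requirement $N \cdot pq \to \infty$, but with $N = d\,2^{d-1}$ that condition already holds for $q$ as small as $2^{-(1-\eps)d}$. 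The genuine role of $2^{-d/3}$ must lie in the variance comparison itself (in \cite{ks}, it is where the higher moments cease to be Gaussian), and your sketch gives no mechanism producing exactly that exponent. Without pinning down that mechanism, the proposal is a plan rather than a proof.
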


Their primary method relies on the observation that the moments of $\Cnt$ correspond to the partition function of a
related \emph{positive temperature} model allowing them to employ techniques such as polymer representation and cluster
expansion, see \cite{velenik}*{Chapter 5}, some of which will also serve as key inputs in our arguments. Finally, given the above findings,
several natural questions arise, some of which were already recorded in \cite{ks}.  We list a few of them pertinent to this article, below.\\

\noindent
$1.$ What is the fluctuation theory of $\Cnt$ for various values of $p?$ \\
$2.$ For what values of $p$ does a CLT analogous to the above theorem hold? \\
$3.$ What happens at $p=\f23?$ \\
$4.$ How does a uniformly chosen independent set in $\Qp$ look?\\
$5.$
 What about the general hard-core model?\\

\emph{In this article, we adopt a more probabilistic perspective to develop a comprehensive understanding of the count and geometry of 
independent sets in $\Qp$, and in the process answering all the above questions.}\\

First, we introduce two quantities $\mu_p \coloneq \mu_{d, p} \coloneq \fr{1}{2}(2 - p)^d$ and
$\sig_p^2 \coloneq \sig^2_{d, p} \coloneq \fr{1}{2}\Fr{4-3p}{2}^d$ that will feature
prominently in our arguments. Their centrality will be apparent soon, but
momentarily, without any further elaboration on their relevance, we proceed to
the statements of our results.\\

Define the scaled count
\begin{align}
    \label{eq:scaledcnt}
    \Cnts \df \fr{\Cnt}{2\cdot 2^{2^{d - 1}} \exp(\mu_p)}.
\end{align}

Then, our first main result is the following.

\begin{theorem}
    \label{thm:main}
  For $p \in (\f23,1)$, 
    \[
        \fr{\Cnts - 1}{2^{-{\f 12}} \cdot \sigma_{p}} \weakto \Nor{0}{1}.
    \]
    Whereas, for $p = \f23$, the limiting distribution is a sum of log-normals, i.e.,
    \[
        \Cnts \weakto \fr{e^{-{\f 14}}}{2^{\f 32}}
            \Rnd{\Exp{\fr{W_1}{\sqrt{2}}} + \Exp{\fr{W_2}{\sqrt{2}}}}
    \]
    where $W_1, W_2 \overset{\mathrm{i.i.d.}}{\sim} \Nor{0}{1}$.
\end{theorem}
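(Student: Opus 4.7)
The overall strategy exploits the bipartite structure $\Q = \cE \cup \cO$ together with a Kahn-type phase coexistence result for percolated hypercubes to decompose
\[
    \cZ_{d, p} \;=\; \cZ^{\cE}_{d, p} \;+\; \cZ^{\cO}_{d, p} \;+\; (\text{negligible}),
\]
where $\cZ^{\cE}_{d, p}/2^{|\cE|} = \sum_{D \sse \cO,\, |D| \le K} 2^{-|N_p(D)|}$ aggregates independent sets whose majority lies on the even side (parameterized by the small odd-side defect $D$), and symmetrically for $\cZ^{\cO}_{d,p}$; the error comes from atypical configurations in which neither side dominates, and has exponentially small mass. I would then invoke the cluster expansion from \cite{ks} to write
\[
    \log\bigparen{\cZ^{\cE}_{d,p}/2^{|\cE|}} \;=\; \Sing^{\cO} + R^{\cO}_{d,p},
    \qquad \Sing^{\cO} \;\df\; \sum_{v \in \cO} 2^{-\deg_p(v)},
\]
where $\Sing^{\cO}$ is the singleton (single-vertex-defect) contribution and $R^{\cO}_{d,p}$ collects multi-polymer clusters; exploiting the geometric decay of polymer weights in the regime $p \ge \tfrac 2 3$, the remainder reduces to a deterministic constant $c_p + o(1)$ with residual random fluctuations negligible relative to $\sigma_p$.

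The probabilistic heart is a CLT for $\Sing^{\cO}$. The crucial observation is that since $\Q$ is bipartite, every edge has exactly one even and one odd endpoint, so the edges incident to distinct odd vertices are pairwise disjoint. Consequently $\set{\deg_p(v)}_{v \in \cO}$ is a family of independent $\Bin(d, p)$ random variables, and $\Sing^{\cO}$ is an i.i.d.\ sum of $2^{d-1}$ bounded summands with mean $\mu_p$ and variance asymptotic to $\sigma_p^2$. The Lindeberg condition is routine and gives $(\Sing^{\cO} - \mu_p)/\sigma_p \weakto \Nor{0}{1}$, and analogously for $\Sing^{\cE}$. A direct covariance calculation shows $\Cov(\Sing^{\cE}, \Sing^{\cO})$ is fed only by adjacent pairs $(u, v) \in \cE \times \cO$ and equals $O\bigparen{d \cdot \bigparen{(2-p)^2/2}^d} = o(\sigma_p^2)$ throughout $p \ge \tfrac 2 3$; combined with the marginal CLTs this yields joint convergence of $(\Sing^{\cE} - \mu_p, \Sing^{\cO} - \mu_p)/\sigma_p$ to a pair of independent standard Gaussians.

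Combining these ingredients gives $\Cnts \approx (e^{c_p}/2)\paren{\exp(\Sing^{\cE} - \mu_p) + \exp(\Sing^{\cO} - \mu_p)}$. For $p > \tfrac 2 3$, $\sigma_p \to 0$, so $\Sing^{\cE} - \mu_p$ and $\Sing^{\cO} - \mu_p$ both tend to $0$ in probability; a first-order Taylor expansion of the exponentials reduces $\Cnts - 1$ to $\tfrac 1 2 \bigparen{(\Sing^{\cE} - \mu_p) + (\Sing^{\cO} - \mu_p)}$, and the joint CLT then delivers $(\Cnts - 1)/(2^{-1/2}\sigma_p) \weakto \Nor{0}{1}$. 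At $p = \tfrac 2 3$, $\sigma_p$ is of constant order and no linearization is possible; the exponentials instead converge in distribution to i.i.d.\ log-normals $\exp(W_i/\sqrt 2)$, and careful bookkeeping of the deterministic prefactor --- including the $\E\exp$ shift of a centered Gaussian of variance $\sigma_p^2 = \tfrac 1 2$ --- yields the claimed $e^{-1/4}/2^{3/2}$ constant.

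The main technical hurdle is the quantitative control of $R^{\cO}_{d,p}$ at the critical point $p = \tfrac 2 3$. The geometric factors controlling multi-polymer weights are just marginal there, so pair-polymer contributions lie precisely on the boundary between ``absorbed into $c_p$ as a deterministic shift'' and ``contributing extra random fluctuations at the CLT scale.'' This is the non-uniform birthday problem highlighted in the introduction --- collisions between defect neighborhoods on the two sides become non-negligible exactly at $p = \tfrac 2 3$ --- and sharp moment bounds on these collision counts are needed to conclude that the leading-order stochastic behavior is indeed captured by the singleton sum alone.
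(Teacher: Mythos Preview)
Your outline is essentially the same strategy as the paper's: decompose by majority side, approximate each half-partition function by $\exp(\Phi_{\cE})$ (resp.\ $\exp(\Phi_{\cO})$) where $\Phi_{\cH}=\sum_{v\in\cH}2^{-\deg_p(v)}$, prove a joint CLT for $(\Phi_{\cE},\Phi_{\cO})$ exploiting that the summands on each side are i.i.d.\ and the cross-covariance is $o(\sigma_p^2)$, and then linearize for $p>\tfrac23$ while taking the exponential limit directly at $p=\tfrac23$. The paper packages the joint CLT via Stein's method with a dependency graph rather than Lindeberg plus a covariance bound, but either route works here.

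There is one genuine confusion to flag. You attribute the $e^{-1/4}$ prefactor at $p=\tfrac23$ to ``the $\E\exp$ shift of a centered Gaussian of variance $\sigma_p^2=\tfrac12$.'' That is not the mechanism. The constant is \emph{not} produced by moment matching; it is the second-order (repeated-singleton) term in what you call $R^{\cO}$, namely $-\tfrac12\sum_{v}\varphi_v^2$, which concentrates at $-\tfrac14$ precisely when $p=\tfrac23$ (equivalently, in the paper's language, it is the no-collision probability in the non-uniform birthday problem, computed via a Stein--Chen Poisson approximation for the number of repeats). The numerical coincidence $-\tfrac12\sum_v\varphi_v^2\approx -\sigma_p^2/2$ is real, but it is the \emph{source} of the constant, not a centering trick one can read off a Gaussian moment generating function. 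Relatedly, your phrase ``geometric decay of polymer weights in the regime $p\ge\tfrac23$'' is slightly misaimed: genuine polymers of size $\ge 2$ are already negligible throughout $p>2-\sqrt{2}$; what is special about $p=\tfrac23$ is that the repeated-singleton clusters (the birthday collisions) stop being $o(1)$ and contribute the finite shift $-\tfrac14$. For $p>\tfrac23$ you also need the check that $\sum_v\varphi_v^2=O_{\mathbb P}(\sigma_p^2)=o_{\mathbb P}(\sigma_p)$, so that this same correction is negligible at the CLT scale; this is routine but should be stated. Once you replace the ``$\E\exp$ shift'' heuristic with the explicit computation of the repeated-singleton term (or the birthday no-collision probability), your argument goes through and matches the paper's.
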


Thus, the above theorem answers the first three questions above for all $p\in[\f23 ,1).$ Note that we have excluded the case $p=1$. This is deliberate since this has already been addressed in several previous works. Thus, refraining from including this case in our results, we encourage the interested reader to review some of the relevant articles mentioned in the introduction and the references therein. Moreover, while as in \cite{ks} one may
also consider $p$ depending on $d,$ we have not pursued this to ensure that the arguments remain the most concise.  
Our
arguments also allow us to generalize the above result to the case of a general hard-core model defined in \eqref{hardcoredef342} with a fixed fugacity $\lambda$ as long as it is greater than $\sqrt 2-1.$ The source of this constraint will be made clear shortly once we present the counterpart results. In this
setting, the exact same statement holds with \begin{align}\label{modconstant123}
\mu_{p, \lam}\coloneq    \mu_{d,  p, \lam} &\df \fr{\lam}{2} \Rnd{2 - \fr{2\lam p}{1 + \lam}}^d,\\
    \nonumber
  \sigma_{p,\lam}^2 \coloneq  \sig_{d, p \lam}^2 &\df \fr{\lam^2}{2}\Rnd{2\Box{1 - p + \fr{p}{(1 + \lam)^2}}}^d,
\end{align}
and, 
\begin{align}
    \label{eq:scaledcntlambda}
    {\Cntsl} \df \fr{\Cntl}{2\cdot (1+\lambda)^{2^{d - 1}}\exp{(\mu_{p,\lambda})}
}. \end{align}

The proof of Theorem \ref{thm:main}, essentially as is, implies that in this setting, for $p > \fr{(1 + \lam)^2}{2\lam (2
+ \lam)}$ (note that as expected this equals $\f 23$ at $\lambda=1$),
\begin{align}\label{hard12}
		\fr{\Cntsl - 1}{2^{-\f12} \cdot \sig_{p, \lam}} \weakto \Nor{0}{1},
\end{align}
    whereas, for $p = \fr{(1 + \lam)^2}{2\lam (2 + \lam)}$, the limiting distribution is a sum of log-normals, i.e.,
\begin{align}\label{hard13}
	\Cntsl \weakto \fr{e^{-\fr{\lam^2}{4}}}{2^{\f32}}
            \Rnd{\Exp{\fr{\lam W_1}{\sqrt{2}}} + \Exp{\fr{\lam W_2}{\sqrt{2}}}}
\end{align}
	where $W_1, W_2 \overset{\text{i.i.d.}}{\sim} \Nor{0}{1}$. Observe that the constraint $\lambda > \sqrt{2}-1$ arises
	naturally from the requirement that the location of the transition $\fr{(1 + \lam)^2}{2\lam (2+ \lam)}$ lies in
	$[0,1).$ It is worth remarking that the threshold $\sqrt 2 - 1$ has made an appearance in earlier works as well; see
	for instance \cite{g11}. 

Our next result addresses the structure of an independent sample uniformly, or more generally from the hard-core model.
Note that some care is needed to state the results, since, as is common for random Gibbs measures, there are two layers
of randomness, one from the underlying percolation, and then given the latter, the associated Gibbs measure. Often a
more desirable statement is a quenched one, i.e., one conditioned on the percolation environment and this is what we
establish next. As above, we will state the result in the special case of $\lambda=1$ for notational simplicity, but again
the result extends to the general case which we will comment on in Section \ref{hardcore3456}. 

\begin{theorem}
    \label{thm:geometry}  Assume $p\ge \f23$ and let  $\US$ be the uniform distribution on independent sets under a quenched realization of the percolation, and let
    $\DS$ be the distribution of $\min(|S \cap \Even|, |S \cap \Odd|)$ for $S \sim \US$. Then, 
    \[
        \TV{\DS - \Pois(\mu_p)} \pto 0,
    \]
    where $\TV{\cdot - \cdot}$ is the total-variation distance. Note that $\pto 0$ implies that with high probability the percolation configuration is such that $\TV{\DS - \Pois(\mu_p)} $ is close to zero.
\end{theorem}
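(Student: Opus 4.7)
The plan is to identify the minority side of $S\sim\US$ as a \emph{defect set} in an abstract polymer model, reduce its quenched law to one where defect vertices are nearly independent Bernoullis, and apply a refined Poisson approximation. By the bit-flip symmetry of $\Qp$, the conditional distribution of $\DS$ given that $\Odd$ is the minority side equals that given $\Even$ is, so it suffices to work under the former conditioning (the rare tie event is handled similarly). Under this conditioning $\DS = |D|$ with $D \df S\cap\Odd$, and since $D$ determines $S$ up to an unconstrained choice of subset of $\Even\setminus N_{\Qp}(D)$, the quenched law is
\[
P_{\Qp}(D=A)\;\propto\;2^{|\Even\setminus N_{\Qp}(A)|}\;\propto\;w(A)\df 2^{-|N_{\Qp}(A)|},
\]
with the minority constraint automatic for $A$ of non-negligible weight. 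Setting $\lambda(v)\df w(\{v\}) = 2^{-|N_p(v)|}$, one has $w(A) = \prod_{v\in A}\lambda(v)$ exactly when no two vertices of $A$ share a $\Qp$-neighbor.

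The crux is to show that for $p\geq\tfrac23$ the partition function $Z\df\sum_A w(A)$ equals $(1+o_\bbP(1))\prod_v(1+\lambda(v))$, so that the quenched law of $D$ is TV-close to the product $\prod_v\Ber(\lambda(v)/(1+\lambda(v)))$. The excess $w(A)-\prod_{v\in A}\lambda(v)$ is nonzero only when $A$ contains two vertices sharing a $\Qp$-neighbor; for a pair $u,v$ at Hamming distance $2$ a direct computation yields
\[
\E\bigl[w(\{u,v\})-\lambda(u)\lambda(v)\bigr] \;=\; \bigl[(1-p+p^2/2)^2-(1-p/2)^4\bigr](1-p/2)^{2(d-2)},
\]
which at $p=\tfrac23$ equals $\tfrac{9}{16}(4/9)^d$. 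Summed over the $\Theta(d^2 2^d)$ such pairs this totals $O(d^2(8/9)^d)\to 0$, and higher-order clusters decay even faster via the cluster-expansion machinery of \cite{ks}; a matching variance bound upgrades this to convergence in probability. The refined Barbour--Hall inequality
\[
\TV{\textstyle\sum_v X_v-\Pois(\Phi)}\;\leq\;\min(1,1/\Phi)\sum_v p_v^2,\qquad p_v\df\lambda(v)/(1+\lambda(v)),\ \Phi\df\sum_v p_v,
\]
then closes out the Poisson step: at $p=\tfrac23$, $\E\sum_v\lambda(v)^2 = 2^{d-1}(1-3p/4)^d = \tfrac12 = O(1)$ while $\Phi$ concentrates around $\mu_p\to\infty$, giving the bound $O_\bbP(1/\mu_p)\to 0$ (and exponentially smaller for $p>\tfrac23$).

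Finally, since $\E\Phi = \mu_p + o(1)$ and $\Var\Phi = O(\sigma_p^2) = O(1)$ for $p\geq\tfrac23$, one has $|\Phi - \mu_p| = O_\bbP(1) = o_\bbP(\sqrt{\mu_p})$, and the elementary estimate $\TV{\Pois(\Phi) - \Pois(\mu_p)} \lesssim |\Phi - \mu_p|/\sqrt{\mu_p}$ completes the proof. The main obstacle is the second paragraph: the cluster-expansion bounds of \cite{ks} control moments of $Z$ in expectation, but here we need a \emph{quenched} TV bound on the entire defect distribution, not just on its normalization. I would address this by computing the quenched factorial moments $\E[\binom{|D|}{k}\mid\Qp]$ directly and matching them to the Poisson values $\Phi^k/k!$ up to the same polymer-interaction corrections, uniformly in $k$ via a standard tail bound on $|D|$.
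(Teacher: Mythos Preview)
Your outline is essentially the paper's approach in disguise: the product-Bernoulli law with $p_v=\lambda(v)/(1+\lambda(v))$ is exactly the paper's $\AS$ sampler (Definition~\ref{def:sampling}), and the reduction ``quenched defect law $\approx$ product Bernoulli $\approx$ Poisson'' is precisely Proposition~\ref{lem:sampling2}. The gap you flag at the end---that matching partition functions in expectation does not yield a quenched TV bound on the whole law---is real, and the paper closes it not via factorial moments but by a direct set-by-set argument: on the class $\Nice$ of independent sets whose minority side lies in $\Good$, the weight $w(A)$ equals $\prod_{v\in A}\lambda(v)$ \emph{exactly}, so $\AS$ restricted to $\Nice$ is \emph{exactly} uniform (equation~\eqref{eq:unifapprox}). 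One then only needs $\US(\Nice^\c)\pto 0$ (from the polymer estimates of Section~\ref{sec:polymer}) and $\AS(\Nice^\c)\pto 0$ (three elementary first/second-moment bounds on the Bernoulli sample), plus a check that the mixing weights for the two sides agree. Your pair computation is the $\AS$ side of this; the $\US$ side is where the cluster-expansion input from \cite{ks} actually enters.

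Two smaller points. First, your opening symmetry claim is not a quenched statement: for a fixed realization of $\Qp$ there is no bit-flip symmetry, so the conditional law of $\DS$ given $\Even$-minority need not equal that given $\Odd$-minority. What is true (and sufficient) is that each conditional law is separately close to $\Pois(\mu_p)$, since $\Phi_\Even$ and $\Phi_\Odd$ both concentrate near $\mu_p$; the paper handles the asymmetric mixing explicitly at $p=\tfrac23$ via weights $e^{\Phi_\Even}/(e^{\Phi_\Even}+e^{\Phi_\Odd})$. Second, your Barbour--Hall step is a clean alternative to the paper's route, which instead passes through the birthday-problem analysis of Section~\ref{sec:birthday_full} to match $\cP(m)$ with $\zeta_p\Phi^m/m!$ term-by-term; your way is arguably more direct for this particular deduction, though the paper's machinery is doing double duty since it is already needed for Theorem~\ref{thm:main}.
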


Using the well known fact that a Poisson random variable with a large mean (since it is a sum of independent Poissons
with mean $1$ plus some error) is close to a Gaussian, as an immediate corollary, we get a CLT for the size of the
defect side.

A key input in the proof of the above result is the following sampling procedure
producing an approximately uniform independent set, which could be of broader interest.
To state the algorithm, let $\N(S)$ denote the number of neighbors in $\Qp$
of a set $S$. Sometimes to avoid introducing further notation, we will also use $\N(S)$ to denote  the {\it set} of neighbors. The context of the usage will be apparent and will not present a scope for confusion. For the
 full hypercube, we will drop the $p=1$ subscript and simply use $N(S)$. Finally, for a single vertex, we use $\N(v)$ for $\N(\{v\})$.

\begin{definition}[Approximate sampler]
    \label{def:sampling}
    Given a realization of the percolation $\Qp$, we sample an independent set $S \sim \AS$ as follows:
\begin{enumerate}
    \item Choose a side $\cH = \Even$ or $\Odd$ with probability ${\f 12}$.

	\item For each vertex $v \in \cH$, pick it independently with probability $\fr{2^{-N_p(v)}}{1 + 2^{-N_p(v)}}$, and collect these vertices in the set $S_1$.
    \item For each vertex $u$ in the other side $\cH^\c$ not in $N_p(S_1)$, i.e., for every $u \in \cH^\c \setminus
        N_p(S_1)$, pick it independently with probability ${\f 12}$. Collect these vertices in the set $S_2$.
    \item Output $S = S_1 \cup S_2$.
\end{enumerate}
\end{definition}

Our final result then states that the output of the aforementioned sampling procedure is not very
different from a uniformly chosen independent set in the case $p>\f23$. 

\begin{proposition}[Properties of $\AS$]
    \label{lem:sampling1} For $p>\f23$, the total-variation distance between $\AS$ and $\US$ vanishes asymptotically in probability,
    i.e.,
    \[
        \TV{\AS - \US} \pto 0.
    \]
\end{proposition}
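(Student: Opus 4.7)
The plan is to exhibit a class $\Nice = \NiceE \sqcup \NiceO$ of ``ideal'' independent sets, split by minority side, on which both $\AS$ and $\US$ are essentially uniform, and to show that both measures place mass $1 - o(1)$ on $\Nice$. Unpacking Definition~\ref{def:sampling}, one computes that for any independent set $I$, writing $A_\cH \coloneq I \cap \cH$,
\begin{align*}
\AS(I) \;=\; \fr{2^{-2^{d-1}}}{2} \sum_{\cH \in \{\Even, \Odd\}} C_\cH \cdot 2^{|N_p(A_\cH)| - \sum_{v \in A_\cH} N_p(v)},
\end{align*}
where $C_\cH \coloneq \prod_{v \in \cH}(1 + 2^{-N_p(v)})^{-1}$. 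The exponent of the second factor is nonpositive, vanishing precisely when the $\Qp$-neighborhoods of the vertices of $A_\cH$ are pairwise disjoint --- call such $A_\cH$ \emph{collision-free}. Define $\NiceE$ to be the collection of independent sets $I$ with $|I \cap \Even| \leq L$ for some $L$ slightly above $\mu_p$, with $I \cap \Even$ collision-free, and with $|I \cap \Odd|$ large enough that the $\cH = \Odd$ summand is superexponentially smaller than the $\cH = \Even$ one; define $\NiceO$ symmetrically.

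To show $\US(\Nice^\c) \pto 0$, use \rthm{thm:geometry} (the minority-side size is approximately $\Pois(\mu_p)$) together with a \emph{non-uniform birthday} estimate showing that among the $\sim \mu_p$ minority vertices, the expected number of pairs at hypercube-distance~$2$ with intersecting $\Qp$-neighborhoods is $o(1)$. For $\AS(\Nice^\c) \pto 0$, a parallel computation applies: $|S_1| = \sum_{v \in \cH} \mathrm{Bern}(q_v)$ with $q_v = 2^{-N_p(v)}/(1+2^{-N_p(v)})$ concentrates near $\mu_p$, while the expected number of collisions in $S_1$ --- obtained by evaluating $\bbE[\mathbf{1}\{v_1, v_2 \in S_1, \; N_p(v_1)\cap N_p(v_2) \neq \emptyset\}]$ edge-by-edge over the percolation, since the weights $q_v$ and the collision events share the same edges --- is vanishing. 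On $I \in \NiceE$ the displayed formula collapses to $\AS(I) = (C_\Even/2) \cdot 2^{-2^{d-1}}(1 + o(1))$, a percolation-measurable constant independent of $I$; likewise on $\NiceO$ with $C_\Odd$. Since $\US(I) = 1/\Cnt$ is also constant, matching total masses via $\sum_I \AS(I) = \sum_I \US(I) = 1$ and $\AS(\Nice), \US(\Nice) = 1 - o(1)$ yields $\AS(\NiceE) \approx C_\Even |\NiceE| / (C_\Even |\NiceE| + C_\Odd |\NiceO|)$, which converges to $1/2$ --- matching $\US(\NiceE) \to 1/2$ from quenched Kahn-type phase coexistence for $\Qp$ --- provided that both $C_\Even / C_\Odd \to 1$ and $|\NiceE|/|\NiceO| \to 1$ in probability. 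The first follows from the Taylor expansion $-\log C_\cH = \sum_{v \in \cH} 2^{-N_p(v)} + O(\sum_v 4^{-N_p(v)}) = \mu_p + Z_\cH + o(1)$, with $Z_\cH$ centered of variance $\sigma_p^2 = o(1)$ for $p > 2/3$; the second follows from the concentration of $\Cnt$ proved in \cite{ks}*{Theorem 1.2}, valid exactly in this regime. Combining these, $\AS(I)/\US(I) \to 1$ uniformly on $\Nice$, so $\TV{\AS - \US} \pto 0$.

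The main technical obstacle is the control of $\AS(\Nice^\c)$: the sampling weights $q_v$ depend on the very percolation edges that determine the birthday collision event $N_p(v_1) \cap N_p(v_2) \neq \emptyset$, so the joint expectations cannot be decoupled and must be computed carefully. The threshold $p = 2/3$ surfaces both in the non-uniform birthday estimate and --- more saliently --- in the concentration $\sigma_p \to 0$ that forces $C_\Even / C_\Odd \to 1$; at $p = 2/3$ this ratio becomes a nontrivial random variable, which is the source of the log-normal limit in \rthm{thm:main}.
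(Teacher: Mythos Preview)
Your overall architecture --- define $\Nice = \NiceE \sqcup \NiceO$, show both measures concentrate there, and show $\AS$ is essentially uniform on $\Nice$ --- matches the paper's. But two of your justifications are problematic.

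\textbf{Circularity.} To establish $\US(\Nice^\c) \pto 0$ you invoke \prettyref{thm:geometry}. In the paper's logical order, \prettyref{thm:geometry} is \emph{derived from} Proposition~\ref{lem:sampling2} (the generalization of the present statement); see the proof at the end of Section~\ref{sec:geometry}. So you cannot use it here. The paper instead obtains $\US(\Nice^\c) \pto 0$ from the cluster-expansion estimates of Section~\ref{sec:polymer}, specifically \eqref{structureinput2}, which shows $1 - |\SP|/\Cnt \pto 0$ directly via Lemmas~\ref{lem:nonpolysmall}, \ref{lem:bigsmallprim}, \ref{notwopoly}. That is the input you should be citing.

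\textbf{The ratio $|\NiceE|/|\NiceO|$.} You assert this converges to $1$ in probability ``from the concentration of $\Cnt$'', but concentration of a sum does not imply the ratio of its two (identically distributed but correlated) summands tends to $1$. Likewise, ``quenched Kahn-type phase coexistence for $\Qp$'' is not an available citable statement. The paper gets around this by proving the sharper $\frac{|\NiceE|}{2^{2^{d-1}}\zeta_p \exp(\Phi_\cE)} \pto 1$ via Lemmas~\ref{lem:iop_polymer1} and~\ref{lem:iop_birthday234}. More to the point, this entire detour is \emph{unnecessary}: once you know $C_\Even/C_\Odd \pto 1$ (which you argue correctly), $\AS$ is approximately uniform on all of $\Nice$, not just on each half separately; since $\US$ is exactly uniform on $\Nice$ and both measures put mass $1-o(1)$ there, the elementary bound \eqref{TVest675} finishes the proof without ever needing $|\NiceE|/|\NiceO| \to 1$ or $\US(\NiceE) \to 1/2$.
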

The case $p=\f23$ is a bit different and the sides $\cE$ and $\cO$ are asymmetrically biased and hence $\cH$ will need to be sampled according to a random distribution. The precise statement in this case needs a few further definitions and is deferred to Section \ref{sec:geometry} where it will be proved along with the $p>\f23$ case.\\

Having stated our main results, we now move on to a discussion of the important ideas in our arguments as well lay down
the key intermediate results we prove en route. In this section we also finish the proof of Theorem \ref{thm:main}
modulo these intermediate results, whose proofs occupy the rest of the paper.

\section{Idea and proof of main theorem}
\label{sec:idea}

\subsection{Overview}
As already mentioned, the hypercube is bipartite with the parts denoted by $\Even$ or $\Odd$. Consequently every
subset of $\Q$ has an even side and an odd side, obtained by the intersections with $\Even$ and $\Odd$ respectively.
Further, recall that for any $v \in \Q$, $\N(v)$  is the (random) number of neighbors
of $v$ in $\Qp$ and 
similarly, for a subset $S \sse \Q$, we define $\N(S)$ to be the number of 
vertices which neighbor any vertex of $S$ in $\Qp$.
This will exclusively be used for sets $S$ which are contained in either the 
even or the odd side of $\Q$.

Now, for any $S \sse \Even$, the number of independent sets with even side $S$
is exactly
\begin{align*}
    2^{2^{d-1} - \N(S)},
\end{align*}
since there are $2^{d-1} - \N(S)$ vertices in the odd side which are not 
excluded from being in an independent set by the inclusion of $S$, and we can
take an arbitrary subset of them to obtain an independent set.
A similar formula holds for the odd side with $S \sse \Odd$.

The key phenomenon that drives much of the argument in the paper is that most independent sets have a clear majority and
a minority side, i.e, either the even side is much bigger than the odd side or vice versa. We will often term the
minority side as the \emph{defect} side following previous work. 

Most of the analysis in the paper is then devoted to analyzing the contributions of the various defect side possibilities.
One possibility is that the defect side solely consists of singletons that are well separated. Another possibility is
that there are elements on the defect side which are $2-$neighbors of each other and form components of size larger than
one,  which we term in the paper as being
$2-$linked. 

In the most basic case where the defect side has cardinality two, a quick back of the
envelope computation reveals that the \emph{expected} number of independent sets with two well separated points is much
larger than those with two points which are $2-$linked. However, the various possibilities of defect sets quickly grow
with its size and an efficient systematic way to deal with them is using cluster expansion where $2-$linked  components
are termed as polymers. This is what the main contribution of \cite{ks} is. However, the inputs from \cite{ks} only allow
access to expected quantities. The fact that we rely on at this point is that $\sigma^2_p=\fr{1}{2}\Fr{4-3p}{2}^d$ is
exponentially small for $p>\f 23$ suggesting strong concentration of $\Cnts$ allowing us to replace the random $\Cnts$ by
its expectation. Thus, we seek to show that the expected number of independent sets with any one of the following
undesirable properties is negligible compared to $\E\Cnts$, following which a simple application of Markov's inequality
suffices:\\
 
 \noindent
$1.$ The sizes of the two sides are not too different.\\
\noindent
$2.$ The size of the minority side itself is not too small in a sense made precise in Section \ref{sec:polymer}.\\
\noindent
$3.$ The minority side has $2-$linked components of size bigger than one.\\

The concentration phenomenon fails to hold for $p\le \f23.$ While the critical case $p=\f23$ can be analyzed using the
arguments of this paper, new arguments are needed when $p<\f23$. This will be the subject of a forthcoming work. We remark
further on this later in the article (see Remark \ref{below23}).

The above discussion indicates that presently we can restrict our analysis to sets where the majority side is rather
large, and the minority or the defect
side  is small and well-separated, meaning that no two 
vertices share a common neighbor on the opposite side.
We denote by $\Good^\Even$ the collection of such possible defect sets on the even side and similarly $\Good^\Odd$ (the
class of all such sets will sometimes be termed simply as $\Good$ for brevity.)
Thus putting the above together, the following approximate equality holds.
\begin{align}\label{polyinput}
    \Cnt \approx 2^{2^{d-1}} \Rnd{\sum_{S \in \Good^\Even} 2^{-\N(S)} + \sum_{S \in \Good^\Odd} 2^{-\N(S)}}.
\end{align}
Notice that for $S \in \Good$, the fact that $S$ is well-separated
implies that $\N(S) = \sum_{v \in S} \N(v)$.
{This important feature allows us to factor the summands in the above sum and
obtain a more tractable expression for $\Cnt$.}
Indeed, we have
\begin{align}\label{goodset32}
    \sum_{S \in \Good^\Even} 2^{-\N(S)} 
    = \sum_{S \in \Good^\Even} \prod_{v \in S} 2^{-\N(v)}.
\end{align}
In the remainder of this section as well as throughout the rest of article, it will be convenient to let, for any $v \in
\Q$, \begin{equation}\label{brevity12}
\varphi_v := 2^{- \N(v)}.
\end{equation}
Note then that, in \eqref{goodset32}, the sum over all $S$ with $|S| = m$ also 
appears in the expansion of
\begin{align}
    \label{eq:expsummand}
    \fr{1}{m!} \Rnd{\sum_{v \in \Even} \varphi_v}^m,
\end{align}
which tempts us to approximate the quantity in \eqref{goodset32} by
\begin{align}\label{expapproximation12}
        \Exp{\sum_{v \in \Even} \varphi_v}.
\end{align}
However, for such a strategy to work, the remaining terms in the expansion of $\Rnd{\sum_{v \in \Even} \varphi_v}^m$, i.e.,
of the form $\varphi_{v_1}\varphi_{v_2}\ldots \varphi_{v_m}$ where the $v_{i}$s are not separated (this includes both when $v_i$
is equal to or a $2-$neighbor of $v_j$ for some $i\neq j$), must form a negligible fraction. 

At this point, it is convenient to recast this in terms of the following non-uniform birthday problem: the fraction
which we are aiming to show is negligible is exactly the collision probability (where a collision refers to $v_i$ being
equal to or a $2-$neighbor of $v_j$ for some $i\neq j$) {when $m$ samples are drawn independently with probability of $v$
proportional to $\varphi_v.$} Perhaps somewhat unexpectedly at first glance, $p=\f 23$ also turns out to be the point where
collisions start becoming likely and hence for $p> \f 23,$ this shows that indeed the collision terms can be ignored. 

Nonetheless, recall that Theorem \ref{thm:main} also covers the $p=\f23$ case. In this case
the undesirable terms account for a nontrivial
proportion of \eqref{eq:expsummand}.
This is addressed relying on an application of the Stein-Chen method. A wonderful survey can be found in \cite{ross}. Using the latter, we show that the number
of collisions resembles a Poisson distribution whose mean does not change much as the number of samples $m$ is varied in
a suitable window of interest. This allows us to calculate the limiting proportion of desirable terms (the probability
that the corresponding Poisson variable is zero), and we find that in the $p=\fr{2}{3}$ case this is $e^{-{\f 14}}$
which leads to replacing \eqref{eq:expsummand} by $e^{-{\f 14}}\fr{1}{m!} \Rnd{\sum_{v \in \Even} \varphi_v}^m$ and hence
the approximation
\begin{align*}
    \sum_{S \in \Good^\Even} 2^{-\N(S)} \approx
        e^{-{\f 14}} \Exp{\sum_{v \in \Even} \varphi_v}.
\end{align*}
Of course, all of the above analogous approximations in both cases also hold if we replace
$\Even$ by $\Odd$.
Putting this all together, we have the approximation
\begin{align}
    \label{eq:mainapprox}
    \fr{\Cnt}{2^{2^{d-1}}} \approx 
    \zeta_p \Rnd{\exp(\Phi_\Even) + \exp(\Phi_\Odd)},
\end{align}
where we have defined $\zeta_p = 1$ for $p > \fr{2}{3}$ and 
$\zeta_{\f 23} = e^{-{\f 14}}$, and 
\begin{align}
    \label{eq:psidef}
    \Phi_\Even \coloneqq \sum_{v \in \Even} \varphi_v
    \qquad \text{and} \qquad
    \Phi_\Odd \coloneqq \sum_{v \in \Odd} \varphi_v. 
\end{align}
Now it turns out that $\E(\Phi_{\cE})=\mu_p$ and $\Var(\Phi_{\Even})=(1+o(1))\sigma_p^2$ which explains their appearance
in Theorem \ref{thm:main} (see Lemma \ref{lem:exvarcov} later where this is worked out). Thus, writing 
\begin{align}\label{iidsum}
\Phi_{\cE}=\mu_p+\sigma_p W_{\cE},\,\,\text{and}\,\,\,\, \Phi_{\cO}=\mu_p+\sigma_p W_{\cO},
\end{align}
we get 
\begin{align}
\fr{\Cnt}{2^{2^{d-1}}} &\approx 
    \zeta_p \Rnd{\exp(\Phi_\Even) + \exp(\Phi_\Odd)}\\
    &\approx \zeta_p \exp(\mu_p)\left[\exp(\sigma_pW_{\cE})+\exp(\sigma_p W_{\cO})\right].
\end{align}
At this point note that when $p>\f23,$ $\sigma_p$ decays to zero exponentially fast while for $p=\f 23$, $\sigma_p=\f1{\sqrt 2}.$
Hence, in the former case, we further have the approximation  
\begin{equation}\label{linear}
\exp(\sigma_pW_{\cE})+\exp(\sigma_p W_{\cO})\approx 2+\sigma_{p}(W_{\cE}+W_{\cO}).
\end{equation}

Given both $\Phi_{\cE}$ and $\Phi_{\cO}$ are sums of independent random variables $\varphi_v$ with $v\in \cE$ and $v\in \cO$ the plausibility of Theorem \ref{thm:main} is now as apparent as that of a central limit theorem for $\Phi_{\cE}$ and $\Phi_{\cO}.$ However, a potential issue is posed by the fact that while marginally a sum of i.i.d variables, $\Phi_{\cE}$ and $\Phi_{\cE}$ share the same randomness and hence are not independent of each other. Nonetheless, the dependency can be shown to be weak enough. While multiple approaches may work, we rely again on an application of Stein's method to prove a joint CLT for the pair $(\Phi_\Even, \Phi_\Odd)$ which suffices for our deductions (this is recorded as Lemma 
\ref{lem:iop_jointclt} below). \\

We conclude this discussion with a brief sketch of the argument leading to Theorem \ref{thm:geometry}. \\

\noindent
\textbf{Structure of independent sets.} It follows, say from \eqref{eq:expsummand}, that the fraction of independent
sets  whose minority side is $\cE$ and has size $m$ is approximately proportional to $\fr{\Phi^m_{\Even}}{m!}$ (and
similarly for $\cO$). Thus, conditioned on $\Even$ being the minority side, the size of the same is approximately
distributed as a Poisson variable with mean $\Phi_{\Even}.$ The concentration of the latter around its mean $\mu_p$
allows us to replace $\Pois(\Phi_{\cE})$ by $\Pois(\mu_p).$ Our description also leads to a natural algorithm to sample
independent sets which we don't elaborate further here in the interest of brevity. \\

\begin{remark}\label{below23} We end with a brief commentary on what one may expect for $p<\f23$. Two notable
	distinctions occur. First, $\sigma_p$ grows exponentially. Second, the birthday problem starts seeing exponentially
	many collisions. Nonetheless, the cluster expansion estimates still indicate that polymers of size $2$ or more
	continue to be rare all the way down to $p > 2-\sqrt 2.$ Thus, \eqref{eq:mainapprox}, which no longer stays valid,
	must be replaced by suitable pre-factors  to take into account the non-trivial effect of collisions. For instance, the approximation $(1+x)\approx e^x$ for $x$ small which turns out to be enough for this paper must be replaced by an approximation of the form $(1+x)\approx e^{x-\frac{x^2}{2}}.$ This general approach also allow us to treat the more interesting regime beyond $2-\sqrt 2$ where dimers (polymers of size two) appear.
	All of this will be the subject of a forthcoming work \cite{bgw2}.
\end{remark}

The remainder of this section outlines the key results we prove towards implementing the above strategy, and then
finishes the proof of Theorem \ref{thm:main} in Section \ref{sec:mainproof} assuming those. The consequence of the joint
CLT result  we require appears as Lemma \ref{lem:iop_jointclt} in Section \ref{jointCLT}.   The cluster expansion input
delivers Lemma \ref{lem:iop_polymer} which justifies \eqref{polyinput}. Lemma \ref{lem:iop_birthday} addresses the
birthday problem and states what the collision probability is.  The rest of the paper then is devoted towards
establishing these lemmas (the structure of the paper is recorded in Section \ref{org34}).

\subsection{Joint CLT for $(\Phi_\Even, \Phi_\Odd)$}\label{jointCLT}
While we will prove the following in Section
\ref{sec:jointclt}, 
\begin{align}\label{jointclt12}
    \Rnd{\fr{\Phi_\Even - \mu_p}{\sigma_p},\fr{\Phi_\Odd - \mu_p}{\sigma_p}}
    \weakto \Nor{0}{1} \otimes \Nor{0}{1},
\end{align}
we record here the straightforward consequence that we will need
for Theorem \ref{thm:main}.
\begin{lemma}
\label{lem:iop_jointclt}
If $p>\fr{2}{3}$, then 
\begin{align*}
    \fr{\exp(\Phi_\Even) + \exp(\Phi_\Odd) - 2 \exp(\mu_p)}
    {\sqrt{2} \cdot \sigma_p \exp(\mu_p)}
    \weakto \Nor{0}{1}.
\end{align*}
If $p=\fr{2}{3}$, then 
\begin{align*}
    \fr{\exp(\Phi_\Even) + \exp(\Phi_\Odd)}{\exp(\mu_p)} 
    \weakto \Exp{\fr{W_1}{\sqrt{2}}} + \Exp{\fr{W_2}{\sqrt{2}}},
\end{align*}
where $W_1$ and $W_2$ are i.i.d.\ $\Nor{0}{1}$.
\end{lemma}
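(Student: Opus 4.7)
The plan is to deduce both statements from the input joint CLT \eqref{jointclt12} by writing $\Phi_\Even = \mu_p + \sigma_p W_\Even$ and $\Phi_\Odd = \mu_p + \sigma_p W_\Odd$, as in \eqref{iidsum}, so that
\begin{align*}
\fr{\exp(\Phi_\Even) + \exp(\Phi_\Odd)}{\exp(\mu_p)} = \exp(\sig_p W_\Even) + \exp(\sig_p W_\Odd),
\end{align*}
and then treating the two cases according to how $\sig_p$ behaves.

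For $p = \fr{2}{3}$, the formula for $\sig_p^2 = \fr{1}{2}\Rnd{\fr{4-3p}{2}}^d$ gives $\sig_p = \fr{1}{\sqrt{2}}$. So the displayed ratio equals $\exp(W_\Even/\sqrt{2}) + \exp(W_\Odd/\sqrt{2})$ exactly, and the statement follows by applying the continuous mapping theorem to the continuous function $(x,y)\mapsto e^{x/\sqrt{2}}+e^{y/\sqrt{2}}$ together with \eqref{jointclt12}.

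For $p > \fr{2}{3}$, the key point is that $\sig_p \to 0$ exponentially. Writing out the target centered-and-scaled quantity gives
\begin{align*}
\fr{\exp(\Phi_\Even) + \exp(\Phi_\Odd) - 2\exp(\mu_p)}{\sqrt{2}\sig_p \exp(\mu_p)} = \fr{(e^{\sig_p W_\Even}-1) + (e^{\sig_p W_\Odd}-1)}{\sqrt{2}\sig_p}.
\end{align*}
I would Taylor expand using the remainder bound $\abs{e^{x}-1-x}\le \tfrac{x^2}{2}e^{\abs{x}}$, so that $e^{\sig_p W_\cH}-1 = \sig_p W_\cH + R_\cH$ with $\abs{R_\cH} \le \tfrac{\sig_p^2 W_\cH^2}{2} e^{\sig_p\abs{W_\cH}}$. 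Since \eqref{jointclt12} implies $(W_\Even,W_\Odd)$ is tight, and $\sig_p \to 0$, we have $\sig_p W_\cH \pto 0$, hence $e^{\sig_p\abs{W_\cH}} = 1 + o_p(1)$ and $R_\cH/\sig_p = O_p(\sig_p W_\cH^2) = o_p(1)$. Therefore the right-hand side equals $\fr{W_\Even + W_\Odd}{\sqrt{2}} + o_p(1)$, and Slutsky combined with \eqref{jointclt12} — which gives $\fr{W_\Even+W_\Odd}{\sqrt{2}} \weakto \Nor{0}{1}$ — yields the claim.

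The only delicate point is controlling the Taylor remainder, but since the input \eqref{jointclt12} already provides tightness of $(W_\Even, W_\Odd)$, the exponential decay of $\sig_p$ for $p > \fr{2}{3}$ makes this routine. Thus, conditional on the joint CLT \eqref{jointclt12} (which is proved separately via Stein's method in Section~\ref{sec:jointclt}), the present lemma reduces to elementary manipulations.
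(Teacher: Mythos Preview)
Your proposal is correct and follows essentially the same approach as the paper: both deduce the lemma from the joint CLT \eqref{jointclt12} by writing $\Phi_\cH = \mu_p + \sigma_p W_\cH$, invoking the continuous mapping theorem for $p=\tfrac{2}{3}$ (where $\sigma_p = 1/\sqrt{2}$), and for $p>\tfrac{2}{3}$ controlling the Taylor remainder $e^{\sigma_p W_\cH}-1-\sigma_p W_\cH$ via the bound $|e^x-1-x|\le \tfrac{x^2}{2}e^{|x|}$ together with tightness of $W_\cH$ and $\sigma_p\to 0$, before applying Slutsky.
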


\subsection{Reduction to $\Good$ defect side via cluster expansion results}
Here we state the bound, which says that for the purposes of proving Theorem \ref{thm:main} as well as Theorem \ref{thm:geometry}, it will suffice to simply consider independent sets whose one side is smaller than $\frac{2^{d}}{d^2}$ as well as separated without any $2-$neighbors. These defect sets will be termed $\Good^\Even$ and $\Good^\Odd$
respectively depending on them being on the even or the odd side. A formal definition appears in Section \ref{sec:polymer} where the result will be proven.

\begin{lemma}
	\label{lem:iop_polymer} For $p \geq \f{2}{3}$,
\begin{align*}
    \fr{\Cnt - 2^{2^{d-1}} \Rnd{\sum_{S \in \Good^\Even} 2^{-\N(S)}
        + \sum_{S \in \Good^\Odd} 2^{-\N(S)}}}
        {2^{2^{d-1}} \sigma_p \exp(\mu_p)} \pto 0.
\end{align*}
\end{lemma}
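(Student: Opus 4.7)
The plan is to partition the independent sets of $\Qp$ by whether each of their two sides lies in $\Good$ or in $\Bad^{\cH} \df 2^{\cH} \setminus \Good^{\cH}$ and then control the two ``bad-both'' and ``good-both'' tails separately. Since every independent set is enumerated exactly once by its even side via $\Cnt = \sum_{S \sse \Even} 2^{2^{d-1} - \N(S)}$, the two Good sums in the statement count, respectively, the independent sets with $I_\Even \in \Good^\Even$ and with $I_\Odd \in \Good^\Odd$, and a short inclusion--exclusion gives
\begin{align*}
    \Cnt - 2^{2^{d-1}} \Bigl( \sum_{S \in \Good^\Even} 2^{-\N(S)} + \sum_{T \in \Good^\Odd} 2^{-\N(T)} \Bigr)
    = |(\Bad, \Bad)| - |(\Good, \Good)|,
\end{align*}
where $|(\Good, \Good)|$ and $|(\Bad, \Bad)|$ count the independent sets with both sides, respectively neither side, in $\Good$. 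So it suffices to show each of these is $o_{\bbP}(2^{2^{d-1}} \sigma_p \exp(\mu_p))$.

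For $|(\Good, \Good)|$, a purely deterministic bound will do: every such independent set has cardinality at most $2 \cdot 2^d/d^2$, so
\begin{align*}
    |(\Good, \Good)| \leq \sum_{k \leq 2^{d+1}/d^2} \binom{2^d}{k} = 2^{O(2^d \log d / d^2)}.
\end{align*}
Since $2^{2^{d-1}} \sigma_p \exp(\mu_p) \geq 2^{2^{d-1} - O(d)}$ for $p \geq \f 23$, this term is smaller than the normalizer by a factor of $2^{-2^{d-1}(1 - o(1))}$ and hence vanishes after scaling, with no use of randomness at all.

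For $|(\Bad, \Bad)|$ I would pass to expectations and apply Markov's inequality. Since $|(\Bad, \Bad)| \leq \sum_{S \in \Bad^\Even} 2^{2^{d-1} - \N(S)}$, the target reduces to
\begin{align*}
    \E\Bigl[ \sum_{S \in \Bad^\Even} 2^{-\N(S)} \Bigr] = o\bigl( \sigma_p \exp(\mu_p) \bigr).
\end{align*}
I would split $\Bad^\Even$ into (i) sets with $|S| > 2^d/d^2$ and (ii) sets with $|S| \leq 2^d/d^2$ containing a $2$-linked component of size at least $2$; these cover $\Bad^\Even$. For (i), once $|S|$ exceeds $2^d/d^2$ the neighborhood $\N(S)$ is with overwhelming probability close to all of $\Odd$, so $\E[2^{-\N(S)}]$ decays extremely fast in $|S|$ and the summed contribution is super-exponentially smaller than $\exp(\mu_p)$ -- a Sapozhenko--Galvin container-type estimate, already adapted to $\Qp$ in the moment bounds of \cite{ks}. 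For (ii), the leading contribution comes from a single $2$-linked pair $\{u, v\}$: direct computation gives $\E\bigl[2^{-\N(\{u,v\})}\bigr] - \E[\varphi_u \varphi_v] = O\bigl((1 - p/2)^{2d}\bigr)$, and summing over the $\Theta(2^{d-1} d^2)$ such pairs on $\Even$ yields a total of order $d^2 (2 - 2p + p^2/2)^d \exp(\mu_p)$. A short arithmetic check shows $(2 - 2p + p^2/2)^d / \sigma_p \to 0$ exponentially for every $p \in [\f 23, 1)$ (at $p = \f 23$, $(\f 89)^d$ against the positive constant $\sigma_p = \f 1{\sqrt 2}$; at $p > \f 23$, the strict inequality $(2 - 2p + p^2/2)^2 < (4 - 3p)/2$ supplies the margin), so even after the $d^2$ prefactor this is $o(\sigma_p \exp(\mu_p))$.

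The hard part will be extending the step (ii) estimate beyond the single-pair leading term to higher-order $2$-linked clusters (polymers of size $\geq 3$ and interactions between multiple polymers). For this I would invoke the convergent cluster expansion developed in \cite{ks} for the annealed partition function: the weights of longer clusters assemble into a geometric series driven by the small single-pair weight, so the full expected contribution from $\Bad^\Even$ sets of type (ii) matches the leading single-pair estimate up to a $(1 + o(1))$ multiplicative factor, preserving the bound $o(\sigma_p \exp(\mu_p))$.
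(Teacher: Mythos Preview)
Your inclusion–exclusion identity $\Cnt - (\text{Good sums}) = |(\Bad,\Bad)| - |(\Good,\Good)|$ is correct and clean, and the deterministic bound on $|(\Good,\Good)|$ is fine. The gap is in your treatment of $|(\Bad,\Bad)|$. You bound it by $\sum_{S \in \Bad^\Even} 2^{2^{d-1}-\N(S)}$ and then claim, for case (i), that $\E\bigl[\sum_{|S|>2^d/d^2} 2^{-\N(S)}\bigr] = o(\sigma_p \exp(\mu_p))$. This is false. That sum counts every independent set whose even side exceeds $2^d/d^2$, which includes essentially all the independent sets whose \emph{odd} side is the minority --- roughly half of $\Cnt$. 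Hence $\E\bigl[\sum_{|S|>2^d/d^2} 2^{-\N(S)}\bigr]$ is of order $\exp(\mu_p)$, not $o(\sigma_p \exp(\mu_p))$ (recall $\sigma_p \le 1$ for $p \ge \tfrac23$). Your heuristic ``$\N(S)$ is close to all of $\Odd$ once $|S|>2^d/d^2$'' is also wrong: a set of size $2^d/d^2$ has at most $2^d/d$ neighbors in $\Q$. The upshot is that by dropping the constraint $I_\Odd \in \Bad^\Odd$ you have lost exactly the information that makes $(\Bad,\Bad)$ rare.

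The paper avoids this by never trying to control $\sum_{|S|>2^d/d^2} 2^{-\N(S)}$ directly. Instead it keeps track of both sides simultaneously via the classes $\EM$, $\OM$, $\UD$ and $\NPolyEnc$: sets with both sides large are handled by the \cite{ks} bounds on $\NPolyEnc$ and $\UD$, while the crude one–sided bound $\sum_S \E[2^{-\N(S)}]$ is only ever applied to $S$ with $|S|\le 2^d/d^2$ containing a non-singleton polymer (your case (ii), Lemma~\ref{notwopoly}). Your argument is repairable along the same lines: split $(\Bad,\Bad)$ according to whether both sides exceed $2^d/d^2$ (then cite \cite{ks} for $\UD \cup \NPolyEnc$), or the smaller side is at most $2^d/d^2$ and contains a $2$-linked pair (then your case (ii) estimate, which is correct, applies to that side). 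But as written, case (i) does not go through.
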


\subsection{Reduction to exponentials via birthday problem estimates}

For this subsection, the joint behavior of $(\Phi_\Even, \Phi_\Odd)$ will not
be important, and so we will denote $\Phi \coloneqq \Phi_\Even$.
Everything stated here will also hold for $\Phi_\Odd$.
When expanding
\begin{align*}
    \Phi^m = \Rnd{\sum_{v \in \Even} \varphi_v}^m,
\end{align*}
each term corresponding to a set $S \in \Good^\Even$ with $|S| = m$ is counted
$m!$ times, but there are also additional terms which contain duplicate factors,
or factors corresponding to vertices which share a neighbor in $\Odd$.

Each term in the expansion can be interpreted as an (unnormalized)
probability of seeing a specific sample when sampling $m$ vertices from $\Even$
with replacement, according to a (random) probability distribution $\pi$, which
is given by $\pi(v) \propto \varphi_v$.
The sum of terms which don't correspond to any $S \in \Good^\Even$ is 
then the (unnormalized) probability that there is a collision in this sample,
meaning that two sampled vertices are equal or share a neighbor.
We denote this event by $\Col_m$.
Additionally, the normalization constant for $\pi$ is $\Phi$, and so
in symbols, we have
\begin{align*}
    m! \sum_{\substack{S \in \Good^\Even \\ |S| = m}} \prod_{v \in S} 2^{-\N(v)}
    &= \Phi^m \cdot \Rnd{1 - \pi(\Col_m)}.
\end{align*}
Relying on the strategy already outlined, 
we prove the following lemma, in Section \ref{sec:proof_of_iop_birthday}.
\begin{lemma}
    \label{lem:iop_birthday}
    Let $\zeta_p = 1$ for $p > \fr{2}{3}$ and $\zeta_{\f 23} = e^{-{\f 14}}$.
    We have
    \begin{align*}
        \fr{\sum_{S \in \Good^\Even} 2^{-\N(S)} - \zeta_p \exp(\Phi_\Even)}
        {\sigma_p \exp(\mu_p)} \pto 0,
    \end{align*}
    and the same holds if we replace $\Even$ by $\Odd$.
\end{lemma}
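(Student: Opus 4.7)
My plan is to rewrite $\sum_{S\in\Good^\Even}2^{-\N(S)}$ via the identity stated just before the lemma as a Poisson-averaged collision probability, and then to estimate this averaged probability by Markov's inequality when $p>2/3$ and by a Chen--Stein Poisson approximation when $p=2/3$. Throughout, let $X\sim\Pois(\Phi)$ be an auxiliary Poisson random variable conditional on the percolation $\Qp$. Summing the identity over $0\le m\le 2^d/d^2$, and using that the tail $m>2^d/d^2$ of $\Pois(\Phi)$ is super-polynomially small relative to $\exp(\mu_p)$ (since $\Phi$ concentrates around $\mu_p\ll 2^d/d^2$), one obtains
\[
\sum_{S\in\Good^\Even}2^{-\N(S)} \;=\; \exp(\Phi)\cdot\E_X\bigl[1-\pi(\Col_X)\bigm|\Qp\bigr] \;+\; \text{(negligible)}.
\]
By Lemma~\ref{lem:iop_jointclt}, $\exp(\Phi)/\exp(\mu_p) = \exp(\Phi-\mu_p) = \Theta_P(1)$ in both regimes, so the lemma reduces to showing $\E_X[\pi(\Col_X)\mid\Qp] - (1-\zeta_p) = o_P(\sigma_p)$.

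Next I isolate the dominant collision statistics. The pair-collision probability under $\pi$ decomposes as $q=(\Psi+T)/\Phi^2$, with $\Psi := \sum_{v\in\Even}\varphi_v^2$ handling $v=v'$ events and $T := \sum_{v\neq v'}\varphi_v\varphi_{v'}\1{v\sim_2 v'}$ handling pairs in $\Even$ that share an open $\Odd$-neighbor in $\Qp$ (the event $v\sim_2 v'$). Since $\N(v)\sim\Bin(d,p)$, a direct edge-count gives $\E[\Psi] = 2^{d-1}(1-3p/4)^d = \sigma_p^2$; an analogous count---tracking the two length-$2$ paths between each pair at Hamming distance $2$---yields $\E[T] = O(d^2\cdot 2^d p^2(1-p/2)^{2d})$, which is exponentially smaller than $\sigma_p^2$ for any fixed $p\in[2/3,1)$. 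A second-moment calculation leveraging independence of $\varphi_v,\varphi_{v'}$ at Hamming distance $>2$ gives $\Var(\Psi) = o(\sigma_p^4)$ at $p=2/3$, so $\Psi\to 1/2$ in probability there; for $p>2/3$, Markov suffices and yields $\Psi+T = O_P(\sigma_p^2)$.

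For $p>2/3$ the desired bound is immediate. Combining the union bound $\pi(\Col_m)\le\Lambda_m := \binom{m}{2}q$ with the Poisson identity $\E_X[\binom{X}{2}] = \Phi^2/2$ gives
\[
\E_X[\pi(\Col_X)\mid\Qp] \;\le\; \tfrac12\,\Phi^2 q \;=\; \tfrac12(\Psi+T) \;=\; O_P(\sigma_p^2) \;=\; o_P(\sigma_p),
\]
which is what is needed since $\zeta_p=1$.

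For $p=2/3$ a genuine Poisson approximation is required ($\sigma_p = 1/\sqrt{2}$ is constant and $\zeta_p = e^{-1/4}$). Restricting to the main range $|m-\mu_p|\le\mu_p^{3/4}$, the concentration of $\Psi$, $\Phi$, and negligibility of $T$ give $\Lambda_m = \binom{m}{2}(\Psi+T)/\Phi^2 \to 1/4$ in probability, uniformly over this range. I then apply the Arratia--Goldstein--Gordon Chen--Stein bound (as reviewed in \cite{ross}) to $C_m := \sum_{1\le i<j\le m}B_{ij}$ with $B_{ij} := \1{v_i\sim v_j}$: pairs are independent unless they share an index, so the local bound becomes
\[
\TV{C_m - \Pois(\Lambda_m)} \;\le\; O\bigl(m^3 q^2\bigr) + O\bigl(m^3\,\E_v[r(v)^2]\bigr),
\]
where $r(v) := \pi(v) + \sum_{v'\sim_2 v}\pi(v')$ is the conditional collision probability given $v_i=v$. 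The first term is $O(1/\mu_p)\to 0$. For the second, the identity $\E[\sum_v\varphi_v^3] = 2^{d-1}((8-7p)/8)^d = O((5/6)^d)$ shows $m^3\sum\pi^3 = o_P(1)$, while a parallel triple-edge-count over $(v,v',v'')$ with $v\sim_2 v'$ and $v\sim_2 v''$ (taking full advantage of the $\Bin$ structure on the remaining independent coordinates) shows the $S(v)^2$ contribution is also $o_P(1)$ once normalized by $\Phi^3\sim\mu_p^3$. Hence $\pi(\Col_m) = 1-e^{-\Lambda_m}+o_P(1)\to 1-e^{-1/4}$ uniformly in the main range, and standard Poisson tails for $X$ outside this range together with dominated convergence finish the argument. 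The principal obstacle is precisely this Chen--Stein step: naive bounds (e.g., using $\max_v S(v)\le\binom{d}{2}/\Phi$) give only $O(d^2)$ rather than $o(1)$, and the genuine $o(1)$ rests on the exponential decay-in-expectation of the $\sum\varphi_v^3$ and triple-collision counts available at $p=2/3$.
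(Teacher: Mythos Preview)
Your approach is essentially the same as the paper's: both restrict to a window of $m$ around $\mu_p$ (you package this as a $\Pois(\Phi)$ average, the paper uses an explicit window $\Win$), both use a union bound for $p>2/3$, and both apply Chen--Stein Poisson approximation for $p=2/3$ (the paper first peels off the neighbor count $N_{\Nbr}$ by a first-moment bound and applies Chen--Stein only to the repeat count $N_{\Rep}$, whereas you apply it to the full collision count, but this is cosmetic). One correction is needed: the relation $v\sim_2 v'$ defining $\Good^\Even$ and $\Col_m$ refers to $2$-neighbors in the \emph{full} hypercube $Q_d$, not to sharing an open neighbor in $\Qp$; your $T$ should therefore sum over deterministic $Q_d$-$2$-neighbor pairs, giving $\E[T]=O\bigl(d^2\,2^d(1-p/2)^{2d}\bigr)$ without the $p^2$ factor, which is still exponentially smaller than $\sigma_p^2$ for all $p\in[2/3,1)$, so the remainder of your argument is unaffected.
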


As promised, we now finish off the proof of Theorem 
\ref{thm:main}
\subsection{Proof of main theorem}
\label{sec:mainproof}

We will write $\fr{\Cnt}{2^{2^{d-1}} \sigma_p \exp(\mu_p)}$ as a small telescoping
sum, ending in a term whose distributional limit we understand, with all the
intermediate terms tending to zero in probability and consequently having no bearing on the distributional limit.
Specifically, we have

\begin{align*} 
    \fr{\Cnt}{2^{2^{d-1}} \sigma_p \exp(\mu_p)}
    &= \fr{\Cnt - 2^{2^{d-1}}
        \Rnd{\sum_{S \in \Good^\Even} 2^{-\N(S)}
        + \sum_{S \in \Good^\Odd} 2^{-\N(S)}}}
        {2^{2^{d-1}} \sigma_p \exp(\mu_p)}  \\
    \intertext{\hfill (converges to $0$ in probability by Lemma \ref{lem:iop_polymer})}
    &\qquad + \fr{\sum_{S \in \Good^\Even} 2^{-\N(S)} - \zeta_p \exp(\Phi_\Even)}
        {\sigma_p \exp(\mu_p)}
        + \fr{\sum_{S \in \Good^\Odd} 2^{-\N(S)} - \zeta_p \exp(\Phi_\Odd)}
        {\sigma_p \exp(\mu_p)} \\
    \intertext{\hfill (both terms converge to $0$ in probability by Lemma \ref{lem:iop_birthday})}
    &\qquad + \zeta_p \fr{\exp(\Phi_\Even) + \exp(\Phi_\Odd)}{\sigma_p \exp(\mu_p)},
\end{align*}
and the last expression has a distributional limit which is described in 
Lemma \ref{lem:iop_jointclt}.
For $p > \fr{2}{3}$, we $\zeta_p = 1$, and Lemma \ref{lem:iop_jointclt} tells us that
\begin{align*}
    \fr{\exp(\Phi_\Even) + \exp(\Phi_\Odd) - 2 \exp(\mu_p)}
    {\sqrt{2} \cdot \sigma_p \exp(\mu_p)}
    \weakto \Nor{0}{1}.
\end{align*}
Hence, for  $p>\fr{2}{3}$
\begin{align*}
    \fr{\Cnt - 2^{2^{d-1}} \cdot 2 \exp \mu_p}
    {2^{2^{d-1}} \cdot \sqrt{2} \cdot \sigma_p \exp(\mu_p)}
    \weakto \Nor{0}{1}.
\end{align*}
As for $p = \fr{2}{3}$, now we have $\zeta_p = e^{-{\f 14}}$ and
$\sigma_p = \fr{1}{\sqrt{2}}$.
Lemma \ref{lem:iop_jointclt} tells us that
\begin{align*}
    \fr{\exp(\Phi_\Even) + \exp(\Phi_\Odd)}{\exp(\mu_p)}
    \weakto \Exp{\fr{W_1}{\sqrt{2}}} + \Exp{\fr{W_2}{\sqrt{2}}},
\end{align*}
where $W_1, W_2 \overset{\text{i.i.d.}}{\sim} \Nor{0}{1}$, which finishes the proof.
\qed

\subsection{Organization of the article}\label{org34}
For the ease of navigation, we record here the general structure of the rest of the paper. In the upcoming Section \ref{sec:jointclt}, we prove the joint CLT result for $(\Phi_{\cE}, \Phi_{\cO}).$ In Section \ref{sec:polymer}, we include all the inputs from \cite{ks} relying on cluster expansion estimates and deliver the proof of the key approximation result, Lemma \ref{lem:iop_polymer}. In Section \ref{sec:birthday_full}, we address the alluded to birthday problem and prove Lemma \ref{lem:iop_birthday}. Section \ref{sec:geometry} is devoted to proving the structural result Theorem \ref{thm:geometry} and the validity of the sampling procedure outlined in Proposition \ref{lem:sampling1} and its counterpart for the case $p=\f23.$
Section \ref{hardcore3456} outlines how the arguments of the paper almost as is, with certain key estimates replaced by
their natural $\lambda-$counterpart, carries over to the general hard-core model. We end with the Appendix, where certain simple lemmas are proved for completeness.

\subsection{Acknowledgements} MBRC and SG were partially supported by NSF career grant 1945172.
VW was partially supported by the NSF Graduate Research Fellowship grant DGE 2146752.
SG learnt about this problem from Gal Kronenberg's talk on \cite{ks} while attending the workshop
titled `Bootstrap Percolation and its Applications' at Banff in April 2024. He thanks the speaker
as well as the organizers.

\section{A joint CLT result for singleton polymers via Stein's method}\label{sec:jointclt}
In this section we prove Lemma \ref{lem:iop_jointclt}.\\

Recall that the sums
\begin{align*}
    \Phi_\Even = \sum_{v \in \Even} \varphi_v
    \qquad \text{and} \qquad
    \Phi_\Odd = \sum_{v \in \Odd} \varphi_v
\end{align*}
 are not independent, because deleting one edge affects the neighborhood
of one odd vertex and one even vertex.
However, their dependence is sufficiently weak that we will be able to prove a joint
CLT for these two variables, which immediately implies Lemma \ref{lem:iop_jointclt}.

\begin{lemma}
\label{lem:jointclt}
For $\Phi_\Even$ and $\Phi_\Odd$ defined as above, $\mu_p = \fr{1}{2}(2-p)^d$, and
$\sigma_p^2 = \fr{1}{2}\Fr{4-3p}{2}^d$, we have
\begin{align*}
    \Rnd{
        \fr{\Phi_\Even - \mu_p}{\sigma_p}, \fr{\Phi_\Odd - \mu_p}{\sigma_p}
    } \weakto \Nor{0}{1} \otimes \Nor{0}{1}.
\end{align*}
\end{lemma}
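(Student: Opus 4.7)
The plan is to apply Stein's method via the Cram\'er--Wold device. First I compute the relevant moments. Each $\varphi_v = 2^{-N_p(v)}$ with $N_p(v) \sim \Bin(d, p)$ satisfies $\E\varphi_v^k = ((2^k - (2^k-1)p)/2^k)^d$; in particular $\E\varphi_v = ((2-p)/2)^d$ and $\Var(\varphi_v) = ((4-3p)/4)^d - ((2-p)/2)^{2d}$. Crucially, because distinct vertices on a single bipartite side have disjoint edge neighborhoods, each of the families $\{\varphi_v\}_{v \in \cE}$ and $\{\varphi_v\}_{v \in \cO}$ is \emph{mutually independent}, so summation immediately delivers $\E\Phi_{\cE} = \E\Phi_{\cO} = \mu_p$ and $\Var(\Phi_{\cE}) = \Var(\Phi_{\cO}) = \sigma_p^2(1 + o(1))$. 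For the cross-covariance, only neighboring pairs $u \sim v$ yield a nonzero contribution, and conditioning on the status of the shared edge gives $\Cov(\varphi_u, \varphi_v) = \fr{p(1-p)}{4}\cdot((2-p)/2)^{2(d-1)}$; summing over the $d \cdot 2^{d-1}$ edges yields
\[
\fr{\Cov(\Phi_\cE, \Phi_\cO)}{\sigma_p^2} \;=\; O(d) \cdot \Rnd{\fr{(2-p)^2}{4-3p}}^{d} \;=\; o(1),
\]
since $(2-p)^2 < 4 - 3p$ throughout $p \in (0, 1)$.

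By the Cram\'er--Wold device, it suffices to show that for each fixed $(a, b) \in \R^2$, the scalar
\[
W \;\df\; \fr{a(\Phi_\cE - \mu_p) + b(\Phi_\cO - \mu_p)}{\sigma_p}
\]
converges in distribution to $\Nor{0}{a^2 + b^2}$. Writing $W = \sigma_p^{-1} \sum_{v \in \Q} X_v$ with $X_v = c_v (\varphi_v - \E\varphi_v)$, $c_v = a$ for $v \in \cE$ and $c_v = b$ for $v \in \cO$, the covariance computation above ensures $\Var(W) \to a^2 + b^2$. The $2^d$ summands have dependency graph exactly equal to $\Q$ itself (since $\varphi_u$ and $\varphi_v$ depend on disjoint edges whenever $u \not\sim v$), with maximum degree $d$. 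Applying Stein's method for sums with dependency neighborhoods (e.g., Theorem 3.6 of \cite{ross}) produces a Wasserstein bound of the schematic form
\[
d_W(W, Z) \;\lesssim\; \fr{d^2}{\sigma_p^3} \sum_{v \in \Q} \E\abs{X_v}^3 \;+\; \fr{d^{3/2}}{\sigma_p^2} \sqrt{\sum_{v \in \Q} \E X_v^4}.
\]

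Substituting $\sigma_p^2 = \fr{1}{2}((4-3p)/2)^d$ and using the $L^3$, $L^4$ moment formulas above, the first term reduces to $\poly(d) \cdot \Rnd{(8-7p)^2/(2(4-3p)^3)}^{d/2}$ and the second to $\poly(d) \cdot \Rnd{(16-15p)/(2(4-3p)^2)}^{d/2}$. \textbf{The main obstacle} is verifying that the base of each geometric factor is strictly less than $1$ uniformly for $p \in [2/3, 1)$---a delicate matter precisely because $\sigma_p \to 0$ exponentially fast when $p > 2/3$, so the $\sigma_p^{-3}$ blow-up in the Stein bound threatens to overwhelm the decay of the $L^3$ and $L^4$ sums. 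Nonetheless, both required inequalities $(8-7p)^2 < 2(4-3p)^3$ and $16 - 15p < 2(4-3p)^2$ can be checked by direct polynomial algebra throughout $p \in [0, 1]$; the latter, for instance, is equivalent to $18p^2 - 33p + 16 > 0$, which holds for every real $p$ since its discriminant $33^2 - 4 \cdot 18 \cdot 16 = -63$ is negative. This delivers the desired scalar convergence, completing the Cram\'er--Wold argument.
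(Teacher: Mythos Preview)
Your proof is correct and follows essentially the same route as the paper: Cram\'er--Wold reduction to a scalar sum, then Stein's method with the hypercube as dependency graph, bounding the third and fourth absolute moments of the centered summands by $\E\varphi_v^3$ and $\E\varphi_v^4$, and finally verifying the two polynomial inequalities $(8-7p)^2 < 2(4-3p)^3$ and $16-15p < 2(4-3p)^2$ to ensure exponential decay of the Wasserstein bound. The paper dismisses the last step as ``a slightly tedious but routine calculation,'' whereas you helpfully carry out the discriminant check for the second inequality.
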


While there might be multiple ways the above might be approached, it seems that Stein's method to prove CLT for random
variables with a sparse dependency graph (a graph with the random variables as the vertex set and an edge set such that
any vertex is independent of the set of all variables outside its neighborhood) is particularly well suited to our
situation. Towards this, we will invoke
{\cite{ross}*{Section 3.2} which was adapted from the main result of \cite{stein}.}

\begin{theorem}{\cite{ross}*{Section 3.2}}
\label{thm:dependentstein}
Let $Y_{n,1}, \dotsc, Y_{n,n}$ form a triangular array of random variables with $\E[{Y_{n,i}}^4] < \infty$,
$\E[Y_{n,i}] = 0$, $\sigma^2 = \Var(\sum_{i=1}^n Y_{n,i})$, and define $\widetilde{W} = \sum_{i=1}^n Y_{n,i} / \sigma$. 
Let the collection $(Y_{n,1}, \dotsc, Y_{n,n})$ have dependency neighborhoods $N_i$,
$i=1,\dotsc,n$, with $D \coloneqq \max_{1 \leq i \leq n} |N_i|$.
Then $\widetilde{W} \weakto W$ as $n\to \infty$ where {$W$ is a  standard normal random variable} if 
\begin{align}
    \label{eq:wassersteinbound}
     \fr{D^2}{\sigma^3} \sum_{i=1}^n \E|Y_{n,i}|^3
        + \fr{D^{3/2}}{\sigma^2} 
        \sqrt{\sum_{i=1}^n \E[Y_{n,i}^4]} \to 0.
\end{align}
\end{theorem}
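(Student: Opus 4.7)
My plan is to prove this via the classical Stein's method for locally dependent sums, working in the Wasserstein metric. I would start by invoking the Stein characterization: to every $1$-Lipschitz test function $h$ one associates a solution $f_h$ of the Stein equation $f_h'(w) - w f_h(w) = h(w) - \E h(Z)$, where $Z \sim \Nor{0}{1}$, whose regularity is well known and in particular satisfies $\|f_h''\|_\infty \leq 2$. Kantorovich--Rubinstein duality then gives $d_W(\widetilde{W}, Z) \leq \sup_h |\E[f_h'(\widetilde{W}) - \widetilde{W} f_h(\widetilde{W})]|$, and the job reduces to showing this Stein quantity is $o(1)$ uniformly in $h$ under the hypothesis \eqref{eq:wassersteinbound}.

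The engine of the proof is a decomposition that exploits the dependency structure. Writing $\xi_i = Y_{n,i}/\sigma$ and $T_i = \sum_{j \in N_i} \xi_j$, the residual $\widetilde{W} - T_i$ is measurable with respect to $\{Y_{n,j}: j \notin N_i\}$ and hence, by the defining property of $N_i$, independent of the pair $(\xi_i, T_i)$. Using $\E\xi_i = 0$ together with this independence, I would rewrite
\begin{align*}
\E[\widetilde{W} f(\widetilde{W})] = \sum_{i=1}^n \E\bigl[\xi_i\bigl(f(\widetilde{W}) - f(\widetilde{W} - T_i)\bigr)\bigr].
\end{align*}
A second-order Taylor expansion of $f(\widetilde{W}) = f((\widetilde{W} - T_i) + T_i)$ about $\widetilde{W} - T_i$ would then produce a principal term $\sum_i \E[\xi_i T_i]\,\E[f'(\widetilde{W} - T_i)]$ (after using the independence to split the expectation) plus a remainder bounded by $\tfrac{1}{2}\|f''\|_\infty \sum_i \E[|\xi_i|T_i^2]$. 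Since $\sum_i \E[\xi_i T_i] = \Var(\widetilde{W}) = 1$, the principal term can be matched against $\E[f'(\widetilde{W})]$ by one further first-order Taylor step applied to $f'$ between $\widetilde{W} - T_i$ and $\widetilde{W}$, yielding a second remainder controlled by $\|f''\|_\infty \sum_i \E[|\xi_i T_i|]\,\E[|T_i|]$.

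What remains is to convert these two remainders into the two summands of \eqref{eq:wassersteinbound}. For the $\sum_i \E[|\xi_i|T_i^2]$ piece, I would expand $T_i^2 = \sum_{j,k \in N_i}\xi_j \xi_k$ and apply the symmetric AM--GM inequality $|\xi_i\xi_j\xi_k| \leq \tfrac{1}{3}(|\xi_i|^3+|\xi_j|^3+|\xi_k|^3)$ together with $|N_i|\leq D$; after summing in $i$ this gives a bound of the form $D^2 \sum_i \E|\xi_i|^3 = D^2 \sigma^{-3}\sum_i \E|Y_{n,i}|^3$, matching the first summand of \eqref{eq:wassersteinbound}. For the $\sum_i \E[|\xi_i T_i|]\,\E[|T_i|]$ piece, two applications of Cauchy--Schwarz (once inside each summand, once across the outer sum over $i$), combined with the power-mean bound $T_i^4 \leq D^3 \sum_{j\in N_i}\xi_j^4$ and the normalization $\sum_i \E\xi_i^2 = 1$, would reproduce a bound of the form $D^{3/2}\sigma^{-2}\sqrt{\sum_i \E[Y_{n,i}^4]}$, matching the second summand. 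The hypothesis then forces the Stein quantity to zero uniformly in $h$, hence $d_W(\widetilde{W}, Z) \to 0$ and $\widetilde{W} \weakto W$.

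The main obstacle will be the bookkeeping of the second Taylor step, where the non-obvious factor $\sum_i\E[|\xi_i T_i|]\,\E[|T_i|]$ enters: the factors $\xi_i$, $T_i$, and the independent copy of $f'$ at $\widetilde{W} - T_i$ have to be tracked carefully so that every error is expressible purely in moments of the raw $Y_{n,i}$ and in powers of the uniform neighborhood bound $D$, in exactly the form of \eqref{eq:wassersteinbound}. Once this accounting is in place the moment inequalities above are mechanical, and the stated criterion is in fact tight for this scheme --- any qualitative improvement would require extra structural hypotheses on the dependency graph beyond a uniform bound on neighborhood sizes.
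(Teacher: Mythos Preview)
The paper does not prove this theorem; it is quoted as a black-box input from \cite{ross}*{Section 3.2} (indeed the paper remarks just after the statement that the result there is phrased in Wasserstein distance and that weak convergence suffices for the application). Your sketch is essentially the standard local-dependence Stein argument that underlies the cited result, so there is nothing to compare against in the paper itself; your approach is correct and coincides with the proof one finds in the reference.
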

In fact, as is often common with applications of Stein's method,  the result quoted from \cite{ross} is in terms of
Wasserstein distance of $\widetilde W$ from $W$. However for our purposes the above weak convergence statement will
suffice. 

Before we start the proof of Lemma \ref{lem:jointclt}, we state a few facts that
will be useful throughout the paper.

\begin{lemma}
\label{lem:xvmoments}
For $\varphi_v = 2^{-\N(v)} \sim 2^{- \Bin(d,p)}$ as above, we have
\begin{align*}
    \E[\varphi_v^k] = \Rnd{1 - \fr{2^k-1}{2^k} p}^d
\end{align*}
for any positive integer $k$.
\end{lemma}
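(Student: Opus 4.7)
The plan is to recognize the expectation as the probability generating function of a binomial evaluated at a specific point. Since $\N(v)$ counts the open edges incident to $v$ in the percolation, and each of the $d$ edges adjacent to $v$ is present independently with probability $p$, we have $\N(v) \sim \Bin(d,p)$. Therefore
\begin{align*}
    \E[\varphi_v^k] = \E\bigl[2^{-k \N(v)}\bigr] = \E\bigl[(2^{-k})^{\N(v)}\bigr].
\end{align*}

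Next I would apply the standard fact that a $\Bin(d,p)$ random variable $X$ satisfies $\E[z^X] = (1 - p + pz)^d$ for any $z \in \R$, which follows immediately by expanding $(1-p+pz)^d$ or by writing $X$ as a sum of $d$ i.i.d.\ $\Ber(p)$ variables and factoring. Substituting $z = 2^{-k}$ gives
\begin{align*}
    \E[\varphi_v^k] = \Rnd{1 - p + p\cdot 2^{-k}}^d = \Rnd{1 - p\cdot\fr{2^k - 1}{2^k}}^d,
\end{align*}
which is the claimed identity. There is no real obstacle here; the entire content is the observation that raising $\varphi_v$ to the $k$-th power converts the base from $2^{-1}$ to $2^{-k}$, after which the binomial PGF handles the rest.
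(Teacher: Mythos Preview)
Your proof is correct and essentially identical to the paper's: both compute $\E[2^{-k\N(v)}]$ by factoring over the $d$ independent Bernoulli edges, which is exactly the derivation of the binomial probability generating function you invoke.
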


\begin{proof}
Since $\Bin(d,p)$ is a sum of $d$ independent $\Ber(p)$ variables,
\begin{align*}
    \E[\varphi_v^k] &= \Rnd{\Ex{2^{-k \cdot \Ber(p)}}}^d \\
    &= \Rnd{(1-p) + \fr{1}{2^k} p}^d \\
    &= \Rnd{1 - \fr{2^k-1}{2^k}p}^d
\end{align*}
as required.
\end{proof}

\begin{lemma}
\label{lem:exvarcov}
For $\Phi_\Even$ and $\Phi_\Odd$ defined as above, we have
\begin{enumerate}
    \item $\E[\Phi_\Even] = \E[\Phi_\Odd] = \fr{1}{2}(2-p)^d$,
    \item $\Var(\Phi_\Even) = \Var(\Phi_\Odd) = \fr{1}{2}\Fr{4-3p}{2}^d(1-o(1))$,
    \item $\Cov(\Phi_\Even, \Phi_\Odd) \ll \Var(\Phi_\Even)$.
\end{enumerate}
Here and throughout the rest of the article we will use $a\ll b$ to denote that $\f ab$ exponentially decays in $d.$ 
\end{lemma}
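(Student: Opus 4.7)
The plan is to prove all three parts by direct moment computations, invoking Lemma~\ref{lem:xvmoments} for single-vertex quantities and using the bipartite/independent-set structure of $\Q$ to reduce joint moments to products of marginals. Part~(1) will follow immediately from linearity and Lemma~\ref{lem:xvmoments} with $k=1$: $\E[\Phi_\Even] = 2^{d-1}(1-p/2)^d = \tfrac{1}{2}(2-p)^d = \mu_p$, and symmetrically for $\Phi_\Odd$.

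For part~(2), the key structural observation I will exploit is that $\Even$ is itself an independent set of $\Q$, so for any two distinct $v,w\in\Even$ the sets of hypercube edges incident to $v$ and to $w$ are disjoint. Since $\varphi_v$ is a function only of the percolation status of the edges incident to $v$, the family $\{\varphi_v : v\in\Even\}$ is therefore mutually independent, and hence $\Var(\Phi_\Even) = 2^{d-1}\Var(\varphi_v)$. Applying Lemma~\ref{lem:xvmoments} with $k=1,2$ then gives $\Var(\varphi_v) = (1-\tfrac{3p}{4})^d - (1-\tfrac{p}{2})^{2d}$, so that $\Var(\Phi_\Even) = \sigma_p^2 - \tfrac{1}{2}\Rnd{\tfrac{(2-p)^2}{2}}^d$. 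The correction is subdominant to $\sigma_p^2$ by a factor of $\Rnd{\tfrac{(2-p)^2}{4-3p}}^d$, which decays exponentially because the elementary inequality $(2-p)^2<4-3p$ on $(0,1)$ is equivalent to $p^2<p$; this yields the claimed $(1-o(1))$ factor.

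For part~(3), the analogous observation is that for $v\in\Even$ and $w\in\Odd$ the edge sets incident to $v$ and $w$ in $\Q$ are disjoint unless $v,w$ are adjacent in $\Q$, in which case they overlap in exactly the single edge $\{v,w\}$. Hence $\Cov(\varphi_v,\varphi_w)=0$ for all non-adjacent pairs. For an adjacent pair I will write $\N(v) = X + Y_v$ and $\N(w) = X + Y_w$, where $X\sim\Ber(p)$ is the indicator that the edge $\{v,w\}$ survives percolation and $Y_v,Y_w\sim\Bin(d-1,p)$ are independent of $X$ and of each other; a short computation using $\E[2^{-2X}] = 1-\tfrac{3p}{4}$ then gives $\Cov(\varphi_v,\varphi_w) = \tfrac{p(1-p)}{4}(1-p/2)^{2(d-1)}$. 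Summing over the $d\cdot 2^{d-1}$ adjacent $\Even$-$\Odd$ pairs and dividing by $\sigma_p^2$ will produce a ratio of $O(d)\cdot\Rnd{\tfrac{(2-p)^2}{4-3p}}^d$, which is again exponentially small for $p<1$.

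I do not anticipate any substantive obstacle; the only content worth emphasizing is that the same elementary estimate $\tfrac{(2-p)^2}{4-3p}<1$ on $(0,1)$ drives both the variance correction and the covariance bound, so parts~(2) and~(3) collapse to essentially one scalar inequality once the independence/shared-edge structure is identified.
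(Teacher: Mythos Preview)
Your proposal is correct and follows essentially the same route as the paper's proof: linearity plus Lemma~\ref{lem:xvmoments} for part~(1), independence of $\{\varphi_v:v\in\Even\}$ for part~(2), and the single-shared-edge decomposition for adjacent $\Even$--$\Odd$ pairs in part~(3), all resting on the scalar inequality $(2-p)^2<4-3p$ for $p\in(0,1)$. The only cosmetic difference is that the paper writes $\varphi_v\varphi_w\sim 2^{-2\Ber(p)}2^{-\Bin(2d-2,p)}$ directly rather than via your $X+Y_v$ decomposition, but the computations are identical.
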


\begin{proof}
First we calculate the expectations, which are equal by symmetry.
Since each $\varphi_v \sim 2^{- \Bin(d,p)}$, we have
\begin{align*}
    \E[\Phi_\Even] &= \sum_{v \in \Even} \E[\varphi_v] = 2^{d-1} \Rnd{1 - \fr{p}{2}}^d = \fr{1}{2} (2-p)^d.
\end{align*}
as required.

Next we calculate the variances, which are also equal by symmetry.
Since all the $\varphi_v$ for $v \in \Even$ are independent,
\begin{align*}
	\Var(\Phi_\Even) &= \sum_{v \in \Even} \Rnd{\E[\varphi_v^2] - \E[\varphi_v]^2} = 2^{d-1} \Rnd{\Rnd{1 - \fr{3p}{4}}^d -
	\Rnd{1 - \fr{p}{2}}^{2d}} = \fr{1}{2} \Rnd{\fr{4-3p}{2}}^d \Rnd{1 - o(1)},
\end{align*}
as $\Rnd{1 - \f p2}^2 < 1 - \f{3p}{4}$ for all $p \in (0, 1)$.\\

Finally, we turn to the covariance.
Since, for any  $v\in \cE, w\in \cO,$ $\varphi_v$ and $\varphi_w$ are independent when $v \not\sim w$, we have
\begin{align*}
    \Cov(\Phi_\Even, \Phi_\Odd)
    = \sum_{v \sim w} \Rnd{\E[\varphi_v \varphi_w] - \E[\varphi_v] \E[\varphi_w]}.
\end{align*}
Now, when $v \sim w$, there is exactly one edge connecting them, and there are
$2d-2$ edges which are incident to only one of the two vertices.
So $\varphi_v \varphi_w \sim 2^{-2 \cdot \Ber(p)} 2^{-\Bin(2d-2,p)}$, where the two random variables 
in the exponent are independent. 
So we have
\begin{align*}
    \Cov(\Phi_\Even, \Phi_\Odd)
    &= 2^{d-1} d \Rnd{
        \Rnd{1 - \fr{3p}{4}} \Rnd{1 - \fr{p}{2}}^{2d-2}
        - \Rnd{1 - \fr{p}{2}}^{2d} } \\
    &= 2^{d-1} d \Rnd{1 - \fr{p}{2}}^{2d-2} \Rnd{
        1 - \fr{3p}{4} - \Rnd{1 - \fr{p}{2}}^2 } \\
    &= d \Rnd{\fr{(2 - p)^2}{2}}^{d-1} \fr{p(1-p)}{4}.
\end{align*}
Again since $0 < p < 1$, we have $(2-p)^2 = 4 - 4p + p^2 < 4 - 4p + p = 4 - 3p$,
and so this is exponentially smaller than the variance.
\end{proof}

Given the above preparation, we are now in a position to finish the proof of Lemma \ref{lem:jointclt}.
\begin{proof}[Proof of Lemma \ref{lem:jointclt}]
By standard weak convergence theory for multivariate Gaussians, we will consider their arbitrary linear combinations to
reduce to a one-dimensional problem. Towards this, let $c_1, c_2$ be constants such that $c_1^2 + c_2^2 = 1$.
Define
\begin{align*}
    \tilde{\Phi} &\coloneqq c_1 (\Phi_\Even - \mu_p) + c_2 (\Phi_\Odd - \mu_p) \\
    &= \sum_{v \in \Even} c_1 (\varphi_v - \E[\varphi_v]) + \sum_{v \in \Odd} c_2 (\varphi_v - \E[\varphi_v]).
\end{align*}
We would like to apply Theorem \ref{thm:dependentstein} to show that
$\fr{\tilde{\Phi}}{\sigma_p} \weakto \Nor{0}{1}$.
We of course have
\begin{align*}
    \E[c_1 (\varphi_v - \E[\varphi_v])] = 0 \qquad \text{and} \qquad
    \E[c_2 (\varphi_v - \E[\varphi_v])] = 0,
\end{align*}
and also, invoking Lemma \ref{lem:exvarcov},
\begin{align*}
    \Var\Rnd{\tilde{\Phi}}
    &= c_1^2 \Var(\Phi_\Even) + c_2^2 \Var(\Phi_\Odd) 
        + 2 c_1 c_2 \Cov(\Phi_\Even, \Phi_\Odd) \\
    &= \fr{1}{2} \Fr{4-3p}{2}^d (1 + o(1)) \\
    &= \sigma_p^2 (1 + o(1)).
\end{align*}
So the conclusion of Theorem \ref{thm:dependentstein}, namely that
$\fr{\tilde{\Phi}}{\sqrt{\Var(\tilde{\Phi})}} \weakto \Nor{0}{1}$, implies
that $\fr{\tilde{\Phi}}{\sigma_p} \to \Nor{0}{1}$ as well.
It remains to check that \eqref{eq:wassersteinbound} holds.
The dependency neighborhood of any $c_i (\varphi_v - \E[\varphi_v])$ is exactly the neighborhood
in $\Q$ of $v$, so the size of all dependency neighborhoods is $d$.
Now let's calculate the moments of $\varphi_v - \E[\varphi_v]$.
Since $\varphi_v, \E[\varphi_v] \geq 0$, for any $k$ we have
\begin{align*}
    \Ex{\Abs{\varphi_v - \E[\varphi_v]}^k} &\leq \Ex{\Rnd{\varphi_v + \E[\varphi_v]}^k} \\
										   &= \sum_{j=0}^k \binom{k}{j} \Ex{\varphi_v^j} \Rnd{\E[\varphi_v]}^{k-j} \\
										   &\leq 2^k \max_{j=0, \ldots, k} \Ex{\varphi_v^j} \Rnd{\E[\varphi_v]}^{k-j} \\
										   &= 2^k \max_{j=0, \ldots, k} \Rnd{\Ex{\varphi_v^j}^{1/j}}^j
									   \Rnd{\Ex{\varphi_v}}^{k-j}.
\end{align*}
Now, by Jensens' inequality, $\E[\varphi_v]\le \Ex{\varphi_v^j}^{1/j} \le \Ex{\varphi_v^k}^{1/k}$ for any $j\le k$, implying

\begin{align*}
	\Ex{\Abs{\varphi_v - \E[\varphi_v]}^3} \lesssim \Ex{\varphi_v^3} = \Fr{8-7p}{8}^d ,
\end{align*}
and
\begin{align*}
	\Ex{\Rnd{\varphi_v - \E[\varphi_v]}^4} \lesssim \Ex{\varphi_v^4} = \Fr{16-15p}{16}^d,
\end{align*}
where the moments were computed in Lemma \ref{lem:xvmoments}.
So the upper bound in \eqref{eq:wassersteinbound} is, up to constants, at most
\begin{align*}
    \fr{d^2 2^d \Fr{8-7p}{8}^d}
        {\Fr{4-3p}{2}^{3d/2}} 
        + \fr{d^{3/2}\sqrt{2^d \Fr{16-15p}{16}^d}}
        {\Fr{4-3p}{2}^d}
    =
        d^2 \Fr{(8 - 7p)^2}{2(4-3p)^3}^{d/2}
        + d^{3/2} \Fr{16-15p}{2 (4-3p)^2}^{d/2}.
\end{align*}
Both terms above are exponentially small for $0 < p < 1$, as can be 
seen by a slightly tedious but routine calculation.
So $\fr{\tilde{\Phi}}{\sigma_p} \weakto \Nor{0}{1}$ for any $c_1, c_2$ with
$c_1^2 + c_2^2 = 1$.
This finishes the proof.
\end{proof}

\subsection{Proof of Lemma \ref{lem:iop_jointclt}}
\label{sec:proof_of_iop_jointclt}

As a direct consequence of the CLT for $(\Phi_\Even, \Phi_\Odd)$, we also obtain a
distributional limit for $\exp(\Phi_\Even) + \exp(\Phi_\Odd)$; for $p>\fr{2}{3}$
we get another normal random variable, and for $p=\fr{2}{3}$ we get a sum of 
two log-normal random variables.

In particular, we can prove Lemma \ref{lem:iop_jointclt}, whose statement has 
been reproduced below for the reader's convenience.

\begin{lemma}[Restatement of Lemma \ref{lem:iop_jointclt}]
Let $\mu_p = \fr{1}{2}(2-p)^d$ and $\sigma_p^2 = \fr{1}{2} \Fr{4-3p}{4}^d$.
If $p>\fr{2}{3}$, then 
\begin{align*}
    \fr{\exp(\Phi_\Even) + \exp(\Phi_\Odd) - 2 \exp(\mu_p)}
    {\sqrt{2} \cdot \sigma_p \exp(\mu_p)}
    \weakto \Nor{0}{1}.
\end{align*}
If $p=\fr{2}{3}$, then 
\begin{align*}
    \fr{\exp(\Phi_\Even) + \exp(\Phi_\Odd)}{\exp(\mu_p)} 
    \weakto \Exp{\fr{W_1}{\sqrt{2}}} + \Exp{\fr{W_2}{\sqrt{2}}},
\end{align*}
where $W_1$ and $W_2$ are i.i.d.\ $\Nor{0}{1}$.
\end{lemma}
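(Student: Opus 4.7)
The plan is to deduce both halves of the lemma from the joint CLT in Lemma \ref{lem:jointclt}, which asserts that
\[
    \Rnd{W_\Even^{(d)}, W_\Odd^{(d)}} \df \Rnd{\fr{\Phi_\Even - \mu_p}{\sigma_p}, \fr{\Phi_\Odd - \mu_p}{\sigma_p}} \weakto (W_1, W_2),
\]
with $W_1, W_2$ independent standard Gaussians. The two regimes split according to the behavior of $\sigma_p$: at $p = \fr{2}{3}$ it equals the positive constant $\fr{1}{\sqrt{2}}$, whereas for $p > \fr{2}{3}$ it decays exponentially in $d$. This dichotomy is what produces the qualitatively different limits stated in the lemma.

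For $p = \fr{2}{3}$ the argument is immediate: since $\sigma_p$ is a positive constant, the joint CLT gives $(\Phi_\Even - \mu_p, \Phi_\Odd - \mu_p) \weakto (W_1/\sqrt{2}, W_2/\sqrt{2})$ directly, and I would then simply apply the continuous mapping theorem to the continuous function $(x, y) \mapsto e^x + e^y$, obtaining
\[
    \fr{\exp(\Phi_\Even) + \exp(\Phi_\Odd)}{\exp(\mu_p)} = \exp(\Phi_\Even - \mu_p) + \exp(\Phi_\Odd - \mu_p) \weakto \Exp{\fr{W_1}{\sqrt{2}}} + \Exp{\fr{W_2}{\sqrt{2}}}.
\]

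For $p > \fr{2}{3}$ I plan to linearize the exponential. Writing $\Phi_\Even - \mu_p = \sigma_p W_\Even^{(d)}$ and using the elementary inequality $|e^x - 1 - x| \le \tfrac{1}{2} x^2 e^{|x|}$ yields
\[
    \fr{\exp(\Phi_\Even) - \exp(\mu_p)}{\sigma_p \exp(\mu_p)} = \fr{\exp(\sigma_p W_\Even^{(d)}) - 1}{\sigma_p} = W_\Even^{(d)} + r_\Even^{(d)},
\]
where the remainder satisfies $|r_\Even^{(d)}| \le \tfrac{1}{2} \sigma_p \Rnd{W_\Even^{(d)}}^2 \exp\Rnd{\sigma_p |W_\Even^{(d)}|}$. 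Since Lemma \ref{lem:jointclt} guarantees tightness of $W_\Even^{(d)}$ and since $\sigma_p \to 0$ exponentially, this remainder tends to zero in probability; the analogous bound holds on the odd side. Summing the two sides and dividing by $\sqrt{2}$ expresses the target quantity as $\Rnd{W_\Even^{(d)} + W_\Odd^{(d)}}/\sqrt{2}$ plus a term vanishing in probability, and Lemma \ref{lem:jointclt} combined with the continuous mapping theorem identifies the distributional limit of the main term as $(W_1 + W_2)/\sqrt{2} \sim \Nor{0}{1}$. Slutsky's theorem then finishes the argument.

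The only delicate point is ensuring that the Taylor remainder is genuinely of smaller order than $\sigma_p$, but this is essentially automatic from the tightness of the rescaled fluctuations together with the exponential decay of $\sigma_p$; no additional moment control beyond what Lemma \ref{lem:jointclt} provides will be required.
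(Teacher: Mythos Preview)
Your proposal is correct and follows essentially the same approach as the paper's proof: both deduce the result from the joint CLT in Lemma \ref{lem:jointclt}, handle the $p=\fr{2}{3}$ case by the continuous mapping theorem applied to $(x,y)\mapsto e^x+e^y$, and for $p>\fr{2}{3}$ linearize the exponential via the Taylor bound $|e^x-1-x|\le \tfrac12 x^2 e^{|x|}$, using tightness of $W_\Even^{(d)}$ together with $\sigma_p\to 0$ to kill the remainder.
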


\begin{proof}
Let $W_\Even = \fr{\Phi_\Even - \mu_p}{\sigma_p}$ and 
$W_\Odd = \fr{\Phi_\Even - \mu_p}{\sigma_p}$ so that by Lemma \ref{lem:jointclt},
\begin{align*}
    (W_\Even, W_\Odd) \weakto \Nor{0}{1} \otimes \Nor{0}{1}.
\end{align*}
When $p > \fr{2}{3}$, $\sigma_p \ll 1$, and so
\begin{align*}
    \exp(\Phi_\Even) + \exp(\Phi_\Odd)
    &= \exp(\mu_p + \sigma_p W_\Even) + \exp(\mu_p + \sigma_p W_\Odd) \\
    &= \exp(\mu_p) (1 + \sigma_p W_\Even') + \exp(\mu_p) (1 + \sigma_p W_\Odd') \\
    &= 2 \exp(\mu_p) + \sqrt{2} \cdot \sigma_p \exp(\mu_p)
        \fr{W_\Even' + W_\Odd'}{\sqrt{2}},
\end{align*}
where $W_\Even' = \sigma_p^{-1} (\exp(\sigma_p W_\Even) - 1)$ and
$W_\Odd' = \sigma_p^{-1}(\exp(\sigma_p W_\Odd) - 1)$.
If we can show that $W_\Even' - W_\Even \pto 0$ and $W_\Odd' - W_\Odd \pto 0$,
then $\fr{W_\Even' + W_\Odd'}{\sqrt{2}} \weakto \Nor{0}{1}$, which would yield the 
desired conclusion.

To do this, observe that Taylor's theorem tells us that for any $z \in \R$,
\begin{align*}
    \Abs{\fr{\exp(\sigma_p z) - 1 - \sigma_p z}{\sigma_p}} 
    \leq \fr{1}{\sigma_p} \fr{\sigma_p^2 z^2}{2}
        \sup_{|t| \in [0, \sigma_p |z|]} \exp(t)
    \leq \fr{\sigma_p z^2}{2} \exp(\sigma_p |z|).
\end{align*}
Now, for $z = W_\Even$ or $z = W_\Odd$ we have
$\sigma_p z^2 \pto 0$ and $\sigma_p |z| \pto 0$ since $z$ is tight.
Thus, by the continuous mapping theorem, 
$\fr{\sigma_p z^2}{2} \exp(\sigma_p |z|) \pto 0$ as well, finishing the proof
for $p > \fr{2}{3}$.

Now, when $p = \fr{2}{3}$, $\sigma_p^2 = \fr{1}{2}$, so
\begin{align*}
    \fr{\exp(\Phi_\Even) + \exp(\Phi_\Odd)}{\exp(\mu_p)}
    &= \fr{\Exp{\mu_p + \fr{W_\Even}{\sqrt{2}}} +
        \Exp{\mu_p + \fr{W_\Odd}{\sqrt{2}}}}{\exp(\mu_p)} \\
    &= \Exp{\fr{W_\Even}{\sqrt{2}}} + \Exp{\fr{W_\Odd}{\sqrt{2}}},
\end{align*}
which converges in distribution to the desired random variable by the continuous
mapping theorem.
\end{proof}
\section{Inputs from cluster expansion}

\label{sec:polymer}
In this section, we restate a precise version of as well as deliver the proof of Lemma \ref{lem:iop_polymer}. We first set up a convenient language and introduce the definitions of the central objects.
This section will primarily rely on estimates from \cite{ks}, or more streamlined versions thereof. Nonetheless, to
make the paper self-contained we will often include brief arguments to justify the estimates. The interested reader is
however encouraged to
review \cite{ks}*{Sections 3,4} for all the details.  We start with a few definitions.

\begin{definition}
    \label{def:omega}
    For any subset $S \sse \cE$ or $S\sse \cO$, and $B\sse N(S)$, define 
\begin{equation}\label{decpoly12}
    \om(S,B):= 2^{-N(S)} (1 - p)^{E(S, B)},
   \end{equation}
    where $E(S, B)$ is the number of edges between $S$ and $B$.
Thus, for $S\sse \Even,$ and similarly for $S\in \cO$, $\om(S,B)$ is the expected number of independent sets $I$ such
that $I\cap \cE=S$ and $I \cap \cO \cap N(S)=B$ (note that $N(S)\sse \cO$) normalized by dividing by $2^{|\cO|}$, the latter being the
total number of independent sets which are subsets of $\cO$, i.e., corresponding to $S=\varnothing.$
Further, define 
    \begin{equation}\label{weightmarginal} \om(S) = \sum_{B\sse N(S)}\om(S,B) . 
    \end{equation}
    
\end{definition}
Thus, $\omega(S)$ is the expected number of independent sets $I$ such that $I\cap \cE=S$, normalized as above. This also
leads to the following identity.   \begin{align}\label{weightpoly}
        \E 2^{-N_p(S)}  &= 2^{-N(S)} \cdot \Ex{2^{N(S) - N_p(S)}} 
                        = 2^{-N(S)} \cdot \Ex{\sum_{B \sse N(S)} \1{B \not\sse N_p(S)}} \\
                        \nonumber
                        &= 2^{-N(S)} \sum_{B \sse N(S)} \Ex{\1{B \not\sse N_p(S)}} 
                        = 2^{-N(S)} \sum_{B \sse N(S)} (1 - p)^{E(S, B)} \\
                        \nonumber
                        &= \om(S)
    \end{align}
Now, note that the number of independent sets $\Cnt$ is given exactly by
\begin{align}\label{count12}
    \Cnt = 2^{2^{d-1}} \sum_{S \sse \Even} 2^{-\N(S)},
\end{align}
since, if we restrict to considering independent sets which have $S$ as their 
intersection with the even side, then there are $2^{d-1} - \N(S)$ allowable 
vertices on the odd side, and we can take an arbitrary subset of them to obtain 
an independent set.

It will turn out that most independent sets will be such that their intersection with $\cE$ or $\cO$ will be quite
disparate in size. Consequently, as already alluded to, we will term the side with the smaller intersection as the
\emph{minority} side with
the  vertices on this side being considered as defects, and the other side being the \emph{majority} side. We will
denote the set of independent sets with the minority side being $\cE$ as $\EM$ and similarly for the odd side as $\OM.$

For brevity, we will mostly present our arguments for $\EM$ with symmetric arguments holding for $\OM.$
 We start with some central definitions; while we will primarily focus on the even side, analogous statements hold for the odd side as well.

Recall that two vertices $u, v \in \cE$ are said to be $2$-neighbors if they share a common
neighbor (necessarily in $\cO$). This naturally leads to the following notion of connectivity.

\begin{definition}[$2$-linked]
    \label{def:twolinked}
    A subset $S \sse \cE$ is said to be $2$-linked if it is connected under the edge-set
    determined by $2$-neighbors, i.e., any two vertices $u,v \in S$ admit a chain of vertices in $S$, starting and
    ending at $u$ and $v$ respectively, where each element is a $2$-neighbor of the preceding one.
\end{definition}

Next, we define closures of sets. 
\begin{definition}[Closure]
    \label{def:closure}
	For $S \subseteq \cE$, define its closure $\bar{S}$ to be the largest set $T \subseteq \cE$ such that $S$ and $T$
	have the same neighborhood in $\Q$, i.e., $\bar{S} = \{v \in \cE : N(v) \subseteq N(S)\}$. Note that trivially
	$|\bar{S}| \leq d^2 |S|$. However, owing to the iso-perimetric properties of $\Q$, a substantially stronger statement
	holds. Estimates recorded in \cite{ks}*{Lemma 1.8}, which turned out to be useful in many previous works as well, imply that as long as $|S|\le \frac{1}{2}2^{d-1},$ we have $|S|\le |N(S)|$, which implies that as long as $|N(S)|\le \frac{1}{2}2^{d-1},$ we have $|\overline S|\le |N(S)|.$
\end{definition}

\begin{definition}[Polymer]
    \label{def:polymer}
    A subset $\g \sse \cE$ is called a polymer if it is $2$-linked and has a closure of size at most $\f{3}{4} \cdot
    2^{d - 1}$.
\end{definition}

The number $\f34$ is somewhat of an arbitrary choice following \cite{ks} and as also remarked therein, the constant can be
chosen to be any number between $({\f 12},1).$ This constraint on the size of a polymer stems from the fact that sets which
occupy at least half of both $\cE$ and $\cO$ do not have very disparate minority and majority sides (one of
the undesirable properties listed in Section \ref{sec:idea}). Note however that they
span a lot of internal edges in the hypercube and hence the probabilistic cost of making them independent is quite high. We will record an estimate which will indicate that they will not play a prominent role and can be safely ignored.

\begin{definition}[Good sets]\label{goodset123}
A subset $S\sse \cE$ (similarly for $\cO$) will be called good if $|S|\le \frac{2^d}{d^2}$ and no two elements of $S$
are $2-$neighbors. The collection of such sets will be called $\Good^\Even$ (similarly $\Good^\Odd$).  The value
$\fr{2^d}{d^2}$ is not particularly special and falls out of certain estimates from \cite{ks} that we will serve as
inputs for our argument.  It is worth  remarking however that a polymer $S$ of size at most $\fr{2^d}{d^2}$
automatically has a closure of size at most $\frac{3}{4}2^{d-1}$ due to the bound stated above in Definition
\ref{def:closure}. 
\end{definition} 

We now restate Lemma \ref{lem:iop_polymer} before diving into its proof.

\begin{lemma}
\label{lem:iop_polymer1} For $p\ge \f23,$
\begin{align*}
    \fr{\Cnt - 2^{2^{d-1}} \Rnd{\sum_{S \in \Good^\Even} 2^{-\N(S)}
        + \sum_{S \in \Good^\Odd} 2^{-\N(S)}}}
        {2^{2^{d-1}} \sigma_p \exp(\mu_p)} \pto 0.
\end{align*}
\end{lemma}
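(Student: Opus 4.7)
The plan is to decompose $\Cnt$ by inclusion-exclusion on the ``goodness'' of each independent set's two sides, bound the bad and overlap terms in expectation via cluster-expansion inputs from \cite{ks}, and conclude with Markov's inequality. Using the exact identity $\Cnt = 2^{2^{d-1}} \sum_{S \subseteq \Even} 2^{-\N(S)}$ from \eqref{count12}, I first note that the target sum $A_\Even := 2^{2^{d-1}} \sum_{S \in \Good^\Even} 2^{-\N(S)}$ counts exactly those independent sets $I$ with $I \cap \Even \in \Good^\Even$; define $A_\Odd$ analogously. Inclusion-exclusion then gives $\Cnt - A_\Even - A_\Odd = \cR - \cC$, where $\cR$ counts independent sets neither of whose sides is good, and $\cC$ counts those with both sides good. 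It therefore suffices to show $\E \cR + \E \cC = o(2^{2^{d-1}} \sigma_p \exp(\mu_p))$, after which Markov yields the claimed convergence in probability.

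The main work is bounding $\E \cR$. Since $I \in \cR$ forces $I \cap \Even \notin \Good^\Even$, I would use $\E \cR \le 2^{2^{d-1}} \sum_{S \notin \Good^\Even} \om(S)$ via \eqref{weightpoly}, then split non-good $S$ into (a) large, with $|S| > 2^d/d^2$, and (b) small but containing a 2-linked component of size $\geq 2$. For (a), the isoperimetric-closure estimates of \cite{ks}*{Lemma 1.8} together with the polymer partition function bounds from \cite{ks} make the $\om$-weight of such configurations exponentially smaller than $\exp(\mu_p)$, and hence $o(\sigma_p \exp(\mu_p))$ throughout $p \ge \f23$. For (b), decomposing $S$ into its 2-linked components and using that well-separated polymers have multiplicative weights, the contribution factors schematically as $\bigl(\sum_{\gamma:\,|\gamma|\geq 2} \E\om(\gamma)\bigr) \cdot \E\exp(\Phi_\Even)$; the key input from \cite{ks} is that the polymer-weight sum over polymers of size $\geq 2$ decays exponentially in $d$ for all $p > 2 - \sqrt 2$, and in particular is $o(\sigma_p)$ for every $p \geq \f23$.

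For $\E \cC$, a crude pair-count after dropping the $(1-p)^{e(S,T)}$ factor gives $\E \cC \leq |\Good^\Even| \cdot |\Good^\Odd|$. Since $|S|, |T| \leq 2^d/d^2$, each $|\Good^\bullet|$ is at most $\exp(O(2^d/d))$, which is exponentially smaller than $2^{2^{d-1}}$, so $\E \cC \ll 2^{2^{d-1}} \sigma_p \exp(\mu_p)$, as needed.

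I expect the main obstacle to be case (b): polymers $\gamma$ of size $\geq 2$ whose 2-neighborhood overlaps with the singleton component of $S$ do not factor exactly, producing an interaction correction between the polymer sum and $\exp(\Phi_\Even)$. This is precisely the issue that the abstract polymer model machinery of \cite{ks} was built to handle, so the plan is to import the relevant polymer weight estimates from \cite{ks}*{Sections 3--4} (or suitably streamlined versions) and check that the residual interactions preserve the $o(\sigma_p \exp(\mu_p))$ bound across the regime $p \geq \f23$.
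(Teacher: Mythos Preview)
Your inclusion--exclusion $\Cnt - A_\Even - A_\Odd = \cR - \cC$ is correct, and your crude bound on $\cC$ is fine. The genuine gap is in your treatment of $\cR$, specifically case~(a). The one-sided inequality $\E\cR \le 2^{2^{d-1}}\sum_{S\notin\Good^\Even}\om(S)$ throws away the crucial information that $\cR$ requires \emph{both} sides to fail to be good. In particular, the class $\{S\subseteq\Even : |S|>2^d/d^2\}$ contains the even side of every typical ``odd-minority'' independent set: these have $|I\cap\Even|$ of order $2^{d-1}/2$ and $|I\cap\Odd|$ small. Since half of all independent sets look like this, one has
\[
\sum_{S\notin\Good^\Even}\om(S)\;\ge\;\sum_{|S|>2^d/d^2}\om(S)\;\sim\;\exp(\mu_p),
\]
(indeed $\sum_{S\subseteq\Even}\om(S)=\E\Cnt/2^{2^{d-1}}\sim 2\exp(\mu_p)$ and the $\Good^\Even$ part accounts for only about half). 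So your case~(a) bound is of order $\exp(\mu_p)$, not $o(\sigma_p\exp(\mu_p))$, and fails already at $p=\tfrac23$ where $\sigma_p=1/\sqrt2$ and fails badly for $p>\tfrac23$ where $\sigma_p$ is exponentially small. The isoperimetric and polymer-weight estimates from \cite{ks} do not rescue this, because they bound the contribution of sets that are large on the \emph{defect} side, not on the majority side.

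The paper avoids this by never reducing $\cR$ to a one-sided sum. Instead it partitions $\Cnt$ into $\NPolyEnc$ (both closures large, handled by \cite{ks}*{Lemma 4.16}) and $\PolyEnc$, then within $\PolyEnc$ introduces the disjoint classes $\EM$, $\OM$ and the residual $\UD$, and shows via the measure $\hat\mu$ that $\hat\mu(\UD)$ is tiny---this is exactly where the two-sided badness is used. The sandwiching
\[
|\EM|+|\OM|\;\le\;A_\Even+A_\Odd+|(\EM\cup\OM)\setminus\SP|\;\le\;|\EM|+|\OM|+2|\UD|
\]
then closes the argument. Your framework can be salvaged, but you must bound $\cR$ using both conditions simultaneously, which effectively forces you back to the $\UD$/$\NPolyEnc$ route.
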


The above will be a consequence of three statements. The first says that with high probability the majority of the
independent sets will
have at least one side whose closure satisfies the polymer bound of $\f{3}{4} \cdot
    2^{d - 1}$. 
This will imply that for such sets, at least one of the sides admits a decomposition into $2-$linked components each of
which is a polymer. The second statement will prove that among all such sets, most of them have clear majority and
minority sides, i.e. the side with the polymer bound will in fact be the minority side with size at most $\f{2^d}{d^2}$ while the majority side will be bigger than $\f{2^d}{d^2}$.  This is the formal definition of $\EM$ and $\OM$ which have already appeared in our discussion. The final statement will prove
that for most such independent sets, on the minority side,  the polymers are all \emph{singletons}. This final class of sets will be termed $\Nice$ and will be the only class that will feature throughout the rest of the paper.\\

Towards accomplishing the above, we first introduce the following definition.
\def\NPolyEnc{\mathsf{NonPolyEnc}}
\begin{align}\label{non-polymer}
\NPolyEnc \coloneq \Bra{I \sse \Q,\, \text{ such that } |\bar{I\cap \cO}|, |\bar{I\cap \cE}|\ge \frac{3}{4}2^{d-1}}.
\end{align}
The acronym denoting that no side is encodable as a union of polymers. Consequently, we will call the set of remaining  sets $\mathsf{PolyEnc}$.
\def\PolyEnc{\mathsf{PolyEnc}}
The reason behind constraining the size of  the closure and not just the set itself in the above definition is more of a
technical nature leading to desirable combinatorial estimates, and we will refrain from elaborating this further, instead
referring the interested reader to \cite{ks}*{Section 3}.

We then have the following bound.

\begin{lemma}\cite{ks}*{Lemma 4.16}\label{lem:nonpolysmall}
$\E |\NPolyEnc|\le \E [\Cnt] \cdot O(\exp(-2^d/d)).$
\end{lemma}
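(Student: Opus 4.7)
The plan is to recast $\E|\NPolyEnc|$ as a weighted sum over pairs $(S_e, S_o)$ with both closures large and then isolate the decay using isoperimetric input together with a sharp counting bound on sets of large closure. Writing $S_e = I \cap \cE$ and $S_o = I \cap \cO$, and using $\P(I \text{ indep in } \Qp) = (1-p)^{e(S_e, S_o)}$, we have
\[
    \E|\NPolyEnc| = \sum_{(S_e, S_o) \in \mathcal{L}} (1-p)^{e(S_e, S_o)}, \qquad \E[\Cnt] = \sum_{S_e \subseteq \cE,\, S_o \subseteq \cO} (1-p)^{e(S_e, S_o)},
\]
where $\mathcal{L}$ is the set of pairs with $|\overline{S_e}|, |\overline{S_o}| \ge \tfrac{3}{4} 2^{d-1}$. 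The first step is to translate the closure hypothesis into a size hypothesis: since $N(\overline{S_e}) = N(S_e)$ and the bipartite isoperimetric inequality on $\Q$ gives $|N(T)| \ge |T|$ for $T \subseteq \cE$ with $|T| \le 2^{d-1}$, we get $|N(S_e)| \ge |\overline{S_e}| \ge \tfrac{3}{4} 2^{d-1}$. Combined with the bound $|\overline{S}| \le d^2|S|$ recorded in Definition~\ref{def:closure}, this forces $|S_e| \ge \tfrac{3 \cdot 2^{d-1}}{4 d^2} = \Omega(2^d/d^2)$, and symmetrically for $S_o$.

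The second step is to exploit the large-neighborhood conclusion to extract probability decay. For fixed $S_e$, the inner sum factors over odd vertices:
\[
    \sum_{S_o \subseteq \cO} (1-p)^{e(S_e, S_o)} = 2^{2^{d-1} - |N(S_e)|} \prod_{v \in N(S_e)} \bigl(1 + (1-p)^{e(v, S_e)}\bigr) \le 2^{2^{d-1}} \Bigl(1 - \tfrac{p}{2}\Bigr)^{|N(S_e)|}.
\]
With $|N(S_e)| \ge \tfrac{3}{4} 2^{d-1}$ this contributes a factor of $\exp(-\Omega(2^d))$ compared to the unrestricted sum $2 \cdot 2^{2^{d-1}}$. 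Relative to the baseline $\E[\Cnt] \asymp 2^{2^{d-1}+1}$, we would be done \emph{if} the number of $S_e$ satisfying $|\overline{S_e}| \ge \tfrac{3}{4} 2^{d-1}$ were bounded by, say, $\exp(o(2^d))$. Unfortunately the trivial count of such $S_e$ is $2^{2^{d-1}}$, which is far too large and would give a vacuous bound.

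The third and decisive step, therefore, is to replace the trivial count by the Sapozhenko-style \emph{approximation/container} bound on the number of subsets of $\cE$ with a specified closure — the workhorse of the Korshunov-Sapozhenko program that is also relied upon throughout \cite{ks}*{Sections 3,4}. Such sets are highly structured: one encodes $S_e$ by a small ``profile'' of size $O(|\overline{S_e}|/d)$ and runs a boundary-exploration procedure, so that the number of sets with $|\overline{S_e}| = a$ is bounded by something like $\exp(O(a \log d/ d))$, a subexponential-in-$a$ count. Combining this with the probability decay from the previous step and summing over the (polynomially many) values of $|\overline{S_e}|, |\overline{S_o}|$ in $[\tfrac{3}{4}2^{d-1}, 2^{d-1}]$, the exponent matches the $\exp(-2^d/d)$ saving claimed. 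The main obstacle is exactly this step — organising the approximation argument on the percolated graph so that the probability decay dominates the count — but the structure is already in place via the weight $\omega(S)$ introduced in Definition~\ref{def:omega}, and all the combinatorial inputs needed are standard consequences of hypercube isoperimetry after \cites{sapozhenko, g11, ks}.
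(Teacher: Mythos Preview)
The paper does not prove this lemma at all: it is quoted verbatim as \cite{ks}*{Lemma~4.16} and used as a black box, so there is no in-paper argument to compare against. Your outline has the right architecture (Sapozhenko-type containers) and Steps~1--2 are fine, but Step~3 contains a genuine error.

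You claim that the number of sets $S_e\subseteq\cE$ with $|\overline{S_e}|=a$ is at most $\exp(O(a\log d/d))$. This is false. Take $a=2^{d-1}$: a set has $\overline{S_e}=\cE$ iff $N(S_e)=\cO$, and a uniformly random subset of $\cE$ has this property with probability at least $1/2$ (each $v\in\cO$ is uncovered with probability $2^{-d}$, union bound). So there are at least $2^{2^{d-1}-1}$ such sets, not $\exp(O(2^{d-1}\log d/d))$. Sapozhenko's method never bounds the raw count of sets with prescribed closure; it bounds a \emph{weighted} sum, with the weight (essentially $2^{-|N(S)|}$ or its $p$-analogue) baked into the container enumeration. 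The ``small profile of size $O(a/d)$'' encodes an \emph{approximate} $(F,G)$-pair for $(\overline{S_e},N(S_e))$, and each such pair is compatible with exponentially many $S_e$; what is controlled is their total weight, not their number.

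This matters because your Step~2 bound $(1-p/2)^{|N(S_e)|}$ is far too crude for the dominant range $|N(S_e)|\approx 2^{d-1}$: a typical $S_e$ of density $1/2$ has $e(v,S_e)\approx d/2$ for each $v\in\cO$, so the true inner sum is $\approx 1$, whereas your bound gives $(2-p)^{2^{d-1}}$. The actual argument in \cite{ks} does not decouple the two sides as you do; it uses that \emph{both} closures are large to force many $S_e$--$S_o$ edges (or equivalently works with the small complements $\cE\setminus\overline{S_e}$, $\cO\setminus N(S_e)$), and runs the weighted container count on that structure. Dropping the $S_o$ constraint in Step~2 and then trying to recover via a raw count in Step~3 cannot close the gap.
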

\emph{
Note that $\NPolyEnc$ is a deterministic class of sets. Whenever we use notation of the form $|\NPolyEnc|$ we mean the number of independent sets in $\Qp$ which belong to $\NPolyEnc$ (such a convention will also be adopted for other deterministic class of subsets of $\Q$).}\\

Now, for instance, by \eqref{expectation12} and Lemma \ref{lem:exvarcov},
\begin{align}\label{estimate56}
    \E[\Cnt]& = 2\cdot 2^{d-1}\cdot\exp\Rnd{\E[\Phi_{\Even}](1+o(1))}\\
    \nonumber
    &= 2 \cdot 2^{2^{d-1}} \cdot \Exp{\frac{1}{2}(2-p)^d (1+o(1))}\text{ and,}\\
     \sqrt{\Var(\Phi_{\Even})} &= \frac{1 - o(1)}{\sqrt{2}} (2 - 3p/2)^{d/2}.
\end{align}
Thus, on an application of Markov's inequality, with probability at least $1- \Exp{-\frac{2^{d}}{5d}},$
\begin{equation}\label{nonpolysmall}
\fr{\Abs{\NPolyEnc}}{2 \cdot 2^{2^{d-1}} \Exp{\E\Phi_{\Even}} \sqrt{\Var(\Phi_{\Even})}} \le  \Exp{-\frac{2^{d}}{5d}}.
\end{equation}

Now, by definition, every independent set  in $\PolyEnc$ admits a decomposition of its intersection either with
$\cE$ and $\cO$ into polymers.  The remainder of the section delivers the two remaining statements promised.  The first
shows that with high probability, for the majority of the independent sets in $\PolyEnc$, there is an unambiguous
minority or defect side and a majority side where the size of the minority side is less than $\frac{2^d}{d^2}$ while for
the majority side it is larger than that.

\renewcommand\OSmall{\mathsf{Small}^{\cO}}
\renewcommand\ESmall{\mathsf{Small}^{\cE}}

\subsection{Nearly every independent set has a minority side and a majority side}

At this point recalling $\EM$ and $\OM,$ we state the following lemma analogous to \eqref{nonpolysmall}.

\begin{lemma} With probability at least $1- \Exp{-\fr{2^d}{5d^4}}$
\label{lem:bigsmallprim}
\begin{align*}
{   \fr{|\PolyEnc|-\Abs{\EM}-\Abs{\OM}}{2 \cdot 2^{2^{d-1}} \Exp{\E\Phi} \sqrt{\Var(\Phi)}}} \le \exp{\left(-{\fr{2^d}{5d^4}}\right)}
\end{align*}
where $\Phi$ is either $\Phi_{\cE}$ or $\Phi_{\cO}.$
In particular, the above quantities converge to zero in probability.
\end{lemma}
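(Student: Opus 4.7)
The plan is to apply Markov's inequality to $\E|\PolyEnc \setminus (\EM \cup \OM)|$ after bounding it by a cluster expansion input plus a crude counting argument. First observe that by the isoperimetric bound recorded in Definition \ref{def:closure}, any $I$ with $|I \cap \cE| \le 2^d/d^2$ satisfies $|\overline{I \cap \cE}| \le |N(I \cap \cE)| \le d \cdot 2^d/d^2 = 2^d/d$, which is less than $\tfrac{3}{4}2^{d-1}$ for $d$ large; similarly on the odd side. Hence $\EM \cup \OM \sse \PolyEnc$, and $|\PolyEnc| - |\EM| - |\OM|$ counts independent sets $I \in \PolyEnc$ failing both minority conditions. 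For such $I$, at least one side, say WLOG $\cE$, satisfies $|\overline{I \cap \cE}| < \tfrac{3}{4}2^{d-1}$ and thus admits a polymer decomposition, and either
\begin{enumerate}
    \item[(a)] $|I \cap \cE| > 2^d/d^2$, i.e., the polymer-encoded side is already too large to serve as a minority, or
    \item[(b)] both $|I \cap \cE|, |I \cap \cO| \le 2^d/d^2$, i.e., both sides are simultaneously tiny.
\end{enumerate}

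For case (a), we invoke the cluster-expansion tail estimates of \cite{ks}. Their polymer weight bounds, packaged through the Kotecký-Preiss convergence criterion verified in \cite{ks}*{Section 4} for $p \ge \tfrac 23$, imply that after summing over all polymer decompositions whose total size exceeds $2^d/d^2$, the expected contribution to $\Cnt$ is at most
\begin{align*}
    2^{2^{d-1}} \sum_{\substack{S \sse \cE,\; |\overline{S}| < \tfrac{3}{4}2^{d-1} \\ |S| > 2^d/d^2}} \omega(S) \;\le\; \E[\Cnt] \cdot \exp\!\Rnd{-\Omega\Rnd{\tfrac{2^d}{d^4}}}.
\end{align*}
This is essentially a slight extension of the argument underlying \cite{ks}*{Lemma 4.16}; the point is that once the total polymer size exceeds the threshold, the cluster expansion forces an exponential penalty in that size, and the threshold $2^d/d^2$ together with the minimal polymer weight $(1-p/2)^d \cdot \text{(combinatorial factor)}$ gives the stated decay with the $d^4$ in the denominator.

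For case (b), a crude enumeration suffices. The expected number of independent sets with $I \cap \cE = S$ and $I \cap \cO = T$ equals $(1-p)^{E_{\Q}(S,T)} \le 1$. The number of pairs $(S,T)$ with $|S|, |T| \le 2^d/d^2$ is bounded by
\begin{align*}
    \Rnd{\sum_{k \le 2^d/d^2} \binom{2^{d-1}}{k}}^2 \;\le\; \exp\!\Rnd{O\!\Rnd{\tfrac{2^d \log d}{d^2}}} = 2^{o(2^{d-1})},
\end{align*}
which is exponentially smaller than the denominator $2 \cdot 2^{2^{d-1}} \exp(\E\Phi) \sqrt{\Var(\Phi)}$ read off from \eqref{estimate56}.

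Combining (a) and (b), the numerator in the statement has expectation at most $2 \cdot 2^{2^{d-1}} \exp(\mu_p) \sqrt{\Var(\Phi)} \cdot \exp(-\Omega(2^d/d^4))$, and Markov's inequality delivers the probability bound with the exponent $-2^d/(5d^4)$, as claimed. The main obstacle is case (a): one needs tail control not just on a single polymer's weight but on the full sum over unions of disjoint polymers whose aggregate size is large, and extracting this cleanly requires re-reading the abstract polymer machinery from \cite{ks}*{Section 4} and verifying that the Kotecký-Preiss criterion still gives a convergent tail beyond the threshold $2^d/d^2$ uniformly for $p \ge \tfrac{2}{3}$.
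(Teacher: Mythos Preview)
Your decomposition into cases (a) and (b) is valid, and case (b) is correctly handled by your crude enumeration --- this is in fact a more elementary argument than what the paper does for that case. However, your route diverges from the paper's in a way that leaves case (a) as an acknowledged gap rather than a completed step.

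The paper does not split into your two cases. Instead it introduces the annealed measure $\hat\mu_\cE$ (probability of $(A,B_1,B_2)$ proportional to $(1-p)^{E(A,B_1)}$), observes the sandwich
\[
\hat Z_\cE\,\hat\mu_\cE(\EM) + \hat Z_\cO\,\hat\mu_\cO(\OM) \le \E|\PolyEnc| \le \hat Z_\cE + \hat Z_\cO,
\]
and then cites \cite{ks}*{Lemma 3.4} directly for $\hat\mu_\cE(\PolyEnc\setminus\EM)\le\exp(-2^d/d^4)$. The sketch given there handles your case (a) by the polymer weight estimates (the input is \cite{ks}*{Theorem 4.1}, recorded in the paper as Lemma~\ref{weightestimate43}, not Lemma~4.16, which concerns $\NPolyEnc$), and handles your case (b) differently: conditionally on $|A|\le 2^d/d^2$, the majority side $B_2$ is a uniformly random subset of a set of size $\ge 2^{d-1}-2^d/d$, hence $|B_2|\ge 2^d/3$ with probability $1-\exp(-c2^d)$.

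Your case (a) is the right idea --- a Chernoff tilt $\sum_{|S|>M}\prod_i\omega(\gamma_i)\le e^{-tM}\exp\bigl(\sum_\gamma e^{t|\gamma|}\omega(\gamma)\bigr)$ with $t\asymp d^{-3/2}$ does work, since the tilted singleton contribution $(e^t-1)\mu_p$ is dominated by $tM = t\cdot 2^d/d^2$ for $p\ge 2/3$, and the non-singleton contribution is controlled by Lemma~\ref{weightestimate43}. But as written you have only gestured at this, and your citation of \cite{ks}*{Lemma~4.16} is off target. The cleaner fix is simply to invoke \cite{ks}*{Lemma~3.4} as the paper does, which already packages exactly the bound you need.
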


Note that by definition, $\EM$ and $\OM$ are disjoint subsets of $\PolyEnc$, so the LHS is non-negative. 

\begin{proof} Note that that $\Cnt=|\PolyEnc|+|\NPolyEnc|.$
Thus using Lemma \ref{lem:nonpolysmall}, \eqref{estimate56} and Markov's inequality, it will suffice to show that
\begin{align}\label{probbound123}
	\fr{\E|\PolyEnc|-\Rnd{\E\Abs{\EM}+\E\Abs{\OM}}}{\E\abs{\PolyEnc}} \le \exp{\left(-{\fr{2^d}{d^4}}\right)}.
\end{align}

The above bounds now  follow from the estimates in \cite{ks}*{Section 3} which we review briefly next.  
As already referred to in the introduction, the main approach in \cite{ks} is to bound the expectation of the number of independent sets in $\PolyEnc$ which are neither in $\EM$ or in $\OM$, using cluster expansion.

The argument proceeds by analyzing a probability measure $\hat \mu$ defined on sets where the probability of a set is proportional to the probability that  the set is independent in $\Qp.$ (The actual measure used in \cite{ks} is slightly more complicated but we present a simplified version which will suffice to convey the main ideas.)
For any set of the form $I=A\cup B_1\cup B_2$ denoted by $(A,B_1,B_2)$, where $A= I \cap \cE$ and whose closure is at most $\f34 2^{d-1}$, $B_1\sse N(A),$ and
$B_2=I\cap\{\cO\setminus N(A)\}$ 
define
\begin{align}\label{measuredef}
\hat\mu_{\cE} (A,B_1,B_2)=
\frac{(1-p)^{E(A,B_1)}}{\hat Z_{\cE}},
\end{align}
($\hat\mu_{\cO}$ and $\hat Z_{\cO}$ are similarly defined). Thus $\hat Z_{\cE}$ is simply the expected number of independent sets whose even side has closure at most $\f342^{d-1}$ and hence is polymer encodable, which by symmetry is equal to $\hat Z_{\cO}.$
 Note that the marginal on the first two coordinates is proportional to the expected number of independent sets whose
even (or odd) side is $A$, and the odd (or even) side restricted to $N(A)$ is $B_1$ and hence $$\hat\mu_{\cE} (A,B_1)
= \frac{2^{2^{d-1}}}{\hat Z_{\cE}}\omega(A,B_1),$$ where the latter was defined in \eqref{decpoly12}. 
Now note that $\EM$ and $\OM$ are disjoint classes of sets in $\PolyEnc$. Further for any $I\in \PolyEnc$, $$\P(I \text{ is independent})\le \hat Z_{\cE}\hat\mu_{\cE}(I)+\hat Z_{\cO}\hat\mu_{\cO}(I).$$ 
Thus,
\begin{align}\label{sandwiching}
\hat Z_{\cE} \hat\mu_{\cE}(\EM)&+\hat Z_{\cO}\hat\mu_{\cO}(\OM)\le \E|\PolyEnc|\le \hat Z_{\cE}+\hat Z_{\cO}.
\end{align}
Now letting $\UD=\PolyEnc\setminus \Box{\EM\cup \OM},$ by 
\cite{ks}*{Lemma 3.4}, 
\begin{align}\label{probbound}
\hat \mu_{\cE}(\UD)=\hat \mu_{\cO}(\UD) \leq \Exp{- \fr{2^d}{d^4}}.
\end{align}
Note that the above follows if 
\begin{equation}\label{key765}
\hat \mu_{\cE}(\PolyEnc\setminus \EM)\le  \Exp{- \fr{2^d}{d^4}}
\end{equation}
 which is what \cite{ks } shows. The first step is to use combinatorial estimates of $\omega(A,B_1)$ defined in \eqref{decpoly12} to show that in \eqref{measuredef},  it is likely that $|A|\le \frac{2^d}{d^2}$ (a particular instance of such an estimate which will feature again
in our arguments is recorded shortly in Lemma \ref{weightestimate43}). 
Given this, since
$B_2$ is a uniformly chosen subset of  $\cO \setminus N(A)$ (under $\hat \mu_{\cE}$) and $|N(A)|\le \frac{2^d}{d},$ with probability at least
$1-\exp(-c 2^d),$ we have $|B_2|\ge \frac{2^{d}}{3}$. This finishes the proof of \eqref{probbound}.

\eqref{probbound123} is now an immediate consequence of \eqref{sandwiching} and \eqref{key765}.
\end{proof}

\def\SP{\mathsf{Ideal}}

\subsection{Every polymer is a singleton}
Our final statement of this section shows that all but a negligible fraction of  $\EM$ (and similarly $\OM$) are sets
whose defect side admits a polymer decomposition solely consisting of singletons, a class we have termed as $\SP.$

\begin{lemma}\label{notwopoly}
\begin{align}\label{notwopoly1}
\frac{\E\Abs{(\EM\cup \OM)\setminus \SP}}{\exp(\E\Phi_{\cE})\sqrt{
\Var(\Phi)}} \le \exp (-cd),
\end{align}
for some constant $c>0.$
\end{lemma}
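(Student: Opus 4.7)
The plan is to bound $\E|\EM\setminus\SP|$ by $2\cdot 2^{2^{d-1}}e^{\mu_p}\sigma_p e^{-cd}$; the bound for $\OM$ then follows by symmetry of $\cE$ and $\cO$. For $I\in\EM$ and $S:=I\cap\cE$, the defining bound $|S|\le 2^d/d^2$ yields $|N(S)|\le d|S|\le 2^d/d$, so by the isoperimetric observation in Definition \ref{def:closure} the closure satisfies $|\bar S|\le |N(S)|\le \f34\cdot 2^{d-1}$; in particular $S$ itself and each of its $2$-linked components $\g_1,\ldots,\g_k$ are polymers in the sense of Definition \ref{def:polymer}. Mutual $2$-separation of the $\g_i$ forces the neighborhoods $N(\g_i)\subseteq\cO$ to be pairwise disjoint, so the random variables $2^{-\N(\g_i)}$ --- each a function only of the edges between $\g_i$ and $N(\g_i)$ --- are independent, giving the factorization $\omega(S)=\prod_i\omega(\g_i)$. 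Together with \eqref{count12} and \eqref{weightpoly} this yields
\begin{align*}
\E|\EM\setminus\SP| \,\le\, 2^{2^{d-1}}\sum_{\substack{\Gamma\,=\,\{\g_1,\ldots,\g_k\}:\\\text{compatible, some }|\g_i|\ge 2}}\prod_{\g\in\Gamma}\omega(\g).
\end{align*}

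To bound the RHS, I would split $\Gamma=\Gamma_{\ge 2}\sqcup\Gamma_1$ into its non-singleton and singleton parts and drop the pairwise $2$-separation constraints between polymers (an upper bound since $\omega(\g)\ge 0$). Treating each polymer independently as either in or out of $\Gamma$ yields
\begin{align*}
\biggl(\prod_{\g:\,|\g|\ge 2}\bigl(1+\omega(\g)\bigr)-1\biggr)\,\prod_{v\in\cE}\bigl(1+\omega(\{v\})\bigr) \,\le\, (e^B-1)\,e^A,
\end{align*}
where $A:=\sum_{v\in\cE}\omega(\{v\})=\mu_p$ (by the singleton computation underlying Lemma \ref{lem:exvarcov}) and $B:=\sum_{\g\text{ polymer},\,|\g|\ge 2}\omega(\g)$. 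Once I establish $B\le\sigma_p e^{-cd}$, the elementary bound $e^B-1\le 2B$ (valid for the eventual regime $B\ll 1$) closes the argument.

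The estimate on $B$ is driven by the dimer contribution. A dimer $\g=\{u,w\}\subseteq\cE$ consists of two vertices at Hamming distance $2$ in $\Q$; they share exactly $2$ common neighbors, so $|N(\g)|=2d-2$ with two double-degree vertices and $2d-4$ single-degree vertices. An edge-by-edge computation analogous to Lemma \ref{lem:xvmoments} gives $\omega(\g)=\bigl((2-p)/2\bigr)^{2d-4}\bigl((1+(1-p)^2)/2\bigr)^2$, and summing over the $2^{d-2}\binom{d}{2}$ dimers yields a total of $\poly(d)\cdot\bigl((2-p)^2/2\bigr)^d$. Dividing by $\sigma_p=\bigl((4-3p)/2\bigr)^{d/2}/\sqrt 2$, the dimer contribution to $B/\sigma_p$ is $\poly(d)\cdot f(p)^{d/2}$ with $f(p)=(2-p)^4/\bigl(2(4-3p)\bigr)$; a short calculus check shows $f$ is strictly decreasing on $[2/3,1]$ with $f(2/3)=64/81$ and $f(1)=1/2$, so $f(p)<1$ throughout the regime of interest. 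For polymers of size $k\ge 3$ I would invoke the isoperimetric estimates on $2$-linked sets in $\cE$ collected in \cite{ks}*{Section 3}, which give $|N(\g)|\gtrsim dk/2$ and make $\omega(\g)$ geometrically smaller in $k$; combined with the standard enumeration bound of $(Cd^2)^k$ for polymers of size $k$ rooted at a fixed vertex, the contribution of size-$k$ polymers forms a geometric series whose total is dominated by the dimer term, producing $B\le\sigma_p e^{-cd}$ as desired.

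The main obstacle is conceptual rather than technical: at the transition $p=2/3$ a single dimer actually carries \emph{more} weight than two well-separated singletons occupying the same two vertices (the ratio is $25/16$), so exponential smallness of $B$ is not a term-by-term phenomenon. It relies instead on the quadratic shortfall between the $2^{d-2}\binom{d}{2}$ dimer slots and the $\binom{2^{d-1}}{2}$ singleton-pair slots, amplified by the variance normalization $\sigma_p$. This competition is precisely what identifies $p=2/3$ as the phase-transition point: for $p<2/3$ one has $f(p)>1$ and dimers contribute non-trivially, consistent with Remark \ref{below23} and the scope of the forthcoming work \cite{bgw2}.
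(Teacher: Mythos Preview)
Your proposal is correct and follows essentially the same route as the paper. Both arguments reduce to bounding $B=\sum_{|\g|\ge 2}\omega(\g)$ by $\sigma_p e^{-cd}$ after the identical factorization $\omega(S)=\prod_i\omega(\g_i)$ and the inequality $\sum_{S\in\Bad^\cE}\prod_i\omega(\g_i)\le e^{\mu_p}(e^B-1)$; your version with $\prod_\g(1+\omega(\g))$ is in fact marginally tighter than the paper's $\exp\bigl(\sum_\g\omega(\g)\bigr)$ before you apply $1+x\le e^x$. The only substantive difference is packaging: the paper invokes \cite{ks}*{Theorem 4.1} (stated here as Lemma \ref{weightestimate43}) to bound all of $B$ in one stroke, obtaining $B\le 2^d(1-p/2)^{2d}\cdot\poly(d)$, whereas you compute the dimer contribution explicitly and defer $k\ge 3$ to the same \cite{ks} machinery. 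Both arrive at the identical decisive inequality --- your $f(p)=(2-p)^4/\bigl(2(4-3p)\bigr)<1$ is exactly the paper's $4(1-p/2)^4<2-3p/2$ --- so the endgame is the same. Your explicit dimer calculation and the remark on the $25/16$ ratio are correct and illuminate why the argument hinges on slot-counting rather than per-configuration weight, though for a complete proof you would still need to either cite Lemma \ref{weightestimate43} or flesh out the $k\ge 3$ isoperimetry-plus-enumeration sketch.
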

\begin{proof}
For $I \in \EM$ (and similarly for $\OM$), let  $S=I\cap \Even$ denote the intersection of $I$ with the minority
side. As already defined, let $\Good^\cE$ be the set of such subsets which do not admit any polymer of size at least $2$. Let $\Bad^{\cE}$ be the
remaining ones. Note that the expected number of independent sets with minority side as $S$ is at most $2^{2^{d-1}}
\Ex{2^{-N_p(S)}}.$ Thus, all that remains to be done is to control $\Ex{\sum_{S \in \Bad^{\cE}} 2^{-N_p(S)}}$ using
\cite{ks}*{Theorem 4.1}.

To that end observe that for every set $S$ with $|S| \leq \frac{2^d}{d^2}$,
\begin{align*}
	\Ex{2^{-N_p(S)}} = \Ex{2^{-\sum_{i = 1}^r N_p(\g_i)}} = \prod_{i = 1}^r \Ex{2^{-N_p(\g_i)}} = \prod_{i = 1}^r \om(\g_i)
\end{align*}
where $\{\g_1, \ldots, \g_r\}$  form the polymers in $S$, since for a polymer $\g$, by \eqref{weightpoly}, $\om(\g) =
\Ex{2^{-N_p(\g)}}$.

Therefore,
\begin{align*}
	\sum_{S\sse \cE, |S|\leq 2^d/d^2} \Ex{2^{-N_p(S)}} = \sum_{S\sse \cE, |S|\leq 2^d/d^2} \prod_{i=1}^{r}\om(\g_i) \leq \Exp{\sum_{\g} \om(\g)}
\end{align*}
where the sum is over all polymers $\g$ which are subsets of $\cE$ (we will not specify this further to maintain brevity).
To see why this inequality holds, observe that for a fixed set
$S$, if the number of polymers it admits is $r$, then the $r^{th}$ term in the series expansion of $\Exp{\sum_\g \om(\g)}$ is
$\frac{1}{r!}\Rnd{\sum \om(\g)}^r$ contains a term like $ \prod_{i=1}^{r}\om(\g_i)$. Further, this term also
has a prefactor of $1$, since each of the $r!$ orderings of the polymer set appear once in the expansion. Finally, all
the terms in the expansion are non-negative and thus they do not cause any undesirable cancelation.

We will now use this connection to eliminate $\Good^{\cE}$ subsets so that we can bound the sum on $\Bad^{\cE}$
subsets. Observe that  similarly to the above,
\begin{align*}
    \sum_{S\sse \Bad^{\cE}} \Ex{2^{-N_p(S)}} &= \sum_{S \sse \Bad^{\cE}} \prod_{i=1}^r \om(\g_i) \\
                                    &\leq \Exp{\sum_\g \om(\g)} -
    \Exp{\sum_{|\g| = 1} \om(\g)}
\end{align*}
where $|\g| = 1$ indicates that $\g$ is a {\it singleton} polymer. To argue this, note that each bad set $S$ occurs in the expansion
of $\Exp{\sum_\g \om(\g)}$, but does not occur in the expansion of $\Exp{\sum_{|\g| = 1} \om(\g)}$ since at least one
polymer in its decomposition is of size at least 2 by definition. Finally, every term appearing in  $\Exp{\sum_{|\g| =
1}\om(\g)}$ also appears with the exact same coefficient in  $\Exp{\sum_\g \om(\g)}.$

Now
\begin{align*}
    \Exp{\sum_\g \om(\g)} - \Exp{\sum_{|\g| = 1} \om(\g)} &= \Exp{\sum_{|\g| = 1} \om(\g)} \Box{\Exp{\sum_{|\g| \geq 2} \om(\g)} - 1}
\end{align*}
and noticing that 
\begin{align*}
 {   \sum_{\g \sse \cE, |\g| = 1} \om(\g) = \Ex{\sum_{v\in \cE} 2^{-N_p(v)}} = \E \Phi_{\cE}},
\end{align*}
we have
\begin{align*}
	\Ex{\sum_{S \in \Bad^{\cE}} 2^{-N_p(S)}} \leq \Exp{\E \Phi_{\cE}} \Box{\Exp{\sum_{|\g| \geq 2} \om(\g)} - 1}.
\end{align*}

We now quote from \cite{ks} estimates of $\omega(\g)$ which also form the backbone of the bound
in \eqref{probbound}, simplified to suit our application. 
\begin{lemma}\cite{ks}*{Theorem 4.1}\label{weightestimate43}
\begin{align*}
    \sum_{\gamma: |\gamma| \geq 2} \om(\gamma) \Exp{|\gamma| d^{-3/2}} \leq d^{-3/2} 2^d \Exp{-g_2}, \text{ where }\\
    g_2 = (2d - 12)\log(1/(1 - p/2)) - 14\log d.
\end{align*}
 \end{lemma}

Plugging in, we get, 
\begin{align}
\label{estimate15}
    \sum_{|\g| \geq 2} \om(\g) \leq 2^d (1 - p/2)^{2d} \cdot \Bra{(1 - p/2)^{-12} d^{14}} 
\end{align}
Recalling the expression for $\Var(\Phi_{\cE}),$ from Lemma \ref{lem:exvarcov},
note that $2^d (1 - p/2)^{2d} \le e^{-cd} (2 -
    3p/2)^{d/2}$
as long as $p > 0.51$
since
\begin{align*}
    4(1 - p/2)^4 < 2 - 3p/2, \quad \forall p > 0.51.
\end{align*}
Further, since for all $p>2-\sqrt {2}$, 
\begin{align}
\sum_{\g: |\g|\ge 2}\om(\g)&=o(1), \text{it follows that}\\
\Box{\Exp{\sum_{|\g| \geq 2} \om(\g)} - 1}&= O\Rnd{\sum_{|\g| \geq 2} \om(\g)}.
\end{align}
Thus,  we get, 
\begin{align}
    \fr{\E\left[{\sum_{S \in \Bad} 2^{-N_p(S)}}\right]}{\Exp{\E \Phi_{\Even}}\sqrt{\Var(\Phi)}} =O(\exp(-cd)), \label{eq:badnegligible}
\end{align}
for some $c>0$ and hence this finishes the proof.
\end{proof}
We now finish the proof of Lemma \ref{lem:iop_polymer1}.

\begin{proof}
Since $\Cnt=|\PolyEnc|+|\NPolyEnc|$, by \eqref{nonpolysmall}, it suffices to prove
\begin{align*}
    \fr{|\PolyEnc| - 2^{2^{d-1}} \Rnd{\sum_{S \in \Good^\Even} 2^{-\N(S)}
        + \sum_{S \in \Good^\Odd} 2^{-\N(S)}}}
        {2^{2^{d-1}} \sigma_p \exp(\mu_p)} \pto 0.
\end{align*}
Now note that $|\EM|\le 2^{2^{d-1}} \sum_{S \in \Good^\Even} 2^{-\N(S)} +|\EM\setminus \SP|$ since for any independent set in $\EM\cap \SP$, the intersection with $\cE$ is necessarily in $\Good^\cE,$ with a similar statement holding for $\OM.$ 
Further, any independent set appearing in both sums, must necessarily be in $\UD$ since its intersections with both $\cE$ and $\cO$ must have sizes smaller than $\fr{2^d}{d^2}.$ Thus 
\begin{align}\label{imp546}
|\EM|+|\OM| &\le 2^{2^{d-1}}\left[ \sum_{S \in \Good^\Even} 2^{-\N(S)}+ \sum_{S \in \Good^\Odd} 2^{-\N(S)}\right]\\
&\,\,\,\,\,\,\,\,\,\,\,\,\,\,\,\,\,\,\,\,+ |{(\EM\cup \OM)\setminus \SP}|\\
\nonumber
&\le |\EM|+|\OM|+2|\UD|.
\end{align}
which is the same as saying
\begin{align}
|\PolyEnc|-|\UD| &\le 2^{2^{d-1}} \left[\sum_{S \in \Good^\Even} 2^{-\N(S)}+ \sum_{S \in \Good^\Odd} 2^{-\N(S)}\right]\\
&\,\,\,\,\,\,\,\,\,\,+ |{(\EM\cup \OM)\setminus \SP}|\\
\nonumber
&\le |\PolyEnc|+|\UD|.
\end{align}
This along with Lemmas \ref{lem:bigsmallprim} and \ref{notwopoly} finishes the proof.
\end{proof}

\begin{remark}\label{dettorandom}Note that bounds in this section have been proved for sizes of certain sets compared to
	$\exp(\E\Phi_{\cE})\sqrt{\Var(\Phi)}$, as this is what will be needed to prove Theorem \ref{thm:main}. The latter then implies
	that $\Cnt$ and $\exp(\E\Phi_{\cE})$ are within $1+o(1)$ of each other for $p>\f23$ while they are comparable for
	$p=\f23.$ Thus, in both cases all the results in this section continue to hold when the sizes are compared to $\Cnt$
	instead. We particularly record the version of Lemma \ref{notwopoly1} which will be useful in the analysis of the
	structure of a typical independent set in Section \ref{sec:geometry}.
\begin{align}\label{structureinput2}
    1-\fr{\abs{\SP}}
        {\Cnt} \pto 0.
\end{align}
\end{remark}

\section{Analysis of a random non-uniform birthday problem}
\label{sec:birthday_full}
This section is devoted to establishing Lemma \ref{lem:iop_birthday} which we recall first. Throughout this section, the
joint behavior of $(\Phi_\Even, \Phi_\Odd)$ is
unimportant.
So in this section we abbreviate $\Phi \coloneqq \Phi_\Even$, and note
that every result stated also holds for $\Phi_\Odd$.
\begin{lemma}
    \label{lem:iop_birthday234}
    Let $\zeta_p = 1$ for $p > \fr{2}{3}$ and $\zeta_{\f 23} = e^{-{\f 14}}$.
    We have
    \begin{align*}
        \fr{\sum_{S \in \Good^\Even} 2^{-\N(S)} - \zeta_p \exp(\Phi_\Even)}
        {\sigma_p \exp(\mu_p)} \pto 0,
    \end{align*}
    and the same holds if we replace $\Even$ by $\Odd$.
    As a consequence, we also have the following estimate:
    \begin{equation}\label{ratio987}
    \fr{\sum_{S \in \Good^\Even} 2^{-\N(S)}}{\zeta_p \exp(\Phi_\Even)} \pto 1, 
    \end{equation}
    and similarly for the odd side. 
\end{lemma}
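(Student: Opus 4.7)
The plan is to exploit the non-uniform birthday interpretation already set up in the overview. Write $\Phi = \Phi_\Even$ and $\pi(v) = \varphi_v/\Phi$ for the normalized weights on $\Even$. Grouping the left-hand side by cardinality yields the exact identity
\[
\sum_{S \in \Good^\Even} 2^{-\N(S)} \;=\; \sum_{m=0}^{\lfloor 2^d/d^2 \rfloor} \frac{\Phi^m}{m!}\bigl(1 - \pi(\Col_m)\bigr),
\]
to be compared with $\zeta_p e^\Phi = \zeta_p \sum_{m \geq 0} \Phi^m/m!$. Since Lemma~\ref{lem:jointclt} gives $\Phi = \mu_p + O_p(\sigma_p)$, and $\mu_p = (2-p)^d/2$ is super-polynomially smaller than the cutoff $2^d/d^2$, standard Poisson-tail bounds show that truncating the exponential series at $2^d/d^2$ introduces only a doubly-exponentially small correction. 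So the task reduces to showing $\pi(\Col_m) \to 1 - \zeta_p$ in probability, uniformly for $m$ in a $\sqrt{\mu_p}\log d$-window around $\mu_p$ (where $\mathrm{Pois}(\Phi)$ concentrates).

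Let $q = \sum_v \pi(v)^2 + \sum_{v \sim_2 w}\pi(v)\pi(w)$ denote the two-sample collision probability and $\lambda_m = \binom{m}{2}q$ its expectation under $\pi^{\otimes m}$. Lemma~\ref{lem:xvmoments} gives $\E[\varphi_v^2] = (1-3p/4)^d$; moreover, when $v \sim_2 w$ in $\Q$, the vertices have disjoint incident edges, so $\varphi_v$ and $\varphi_w$ are independent with $\E[\varphi_v\varphi_w] = (1-p/2)^{2d}$. In fact $\{\varphi_v\}_{v \in \Even}$ are mutually independent (distinct $v,w\in\Even$ share no edge), so a Chebyshev argument gives exponential-rate concentration of $\sum_v \varphi_v^2$ and $\sum_{v\sim_2 w}\varphi_v\varphi_w$ around their means, yielding $\lambda_m = (1+o_p(1))\cdot (m/\mu_p)^2 \cdot (2-3p/2)^d/4$ with the 2-neighbor term of lower order. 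Thus for $m \asymp \mu_p$, $\lambda_m \to 1/4$ in probability at $p = 2/3$, and $\lambda_m = O_p(\sigma_p^2)$ for $p > 2/3$. In the $p > 2/3$ case a Markov bound alone suffices: $\pi(\Col_m) \leq \lambda_m = O_p(\sigma_p^2) = o_p(\sigma_p)$, matching $\zeta_p = 1$. In the critical case $p = 2/3$, I would apply the Stein--Chen Poisson approximation to $X_m = \sum_{1\leq i<j\leq m}\mathbf{1}[v_i \text{ collides with } v_j]$; the dependency graph on pairs is sparse (two pairs are independent once they share no index), and the resulting Stein--Chen error is controlled by an expectation over ordered triples exhibiting two simultaneous collisions, which a direct combinatorial computation (using the three subcases determined by coincidences and 2-adjacencies among the three vertices) shows to be exponentially small in $d$; Markov upgrades this to convergence in probability, giving $\pi(\Col_m) = 1 - e^{-\lambda_m} + o_p(1) \to 1 - e^{-1/4}$.

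Assembling, I would split the $m$-sum into $|m - \Phi| \leq \sqrt{\Phi}\log d$ and its complement, bound the in-window contribution using the uniform convergence above and the off-window contribution by standard Poisson tails. Dividing by $\sigma_p e^{\mu_p}$ and noting $e^\Phi/e^{\mu_p} = e^{\Phi-\mu_p} = e^{O_p(\sigma_p)} = O_p(1)$ (with $\sigma_p$ bounded) yields the main conclusion; the ratio statement \eqref{ratio987} then drops out by dividing through by $\zeta_p e^\Phi$, since the same quotient $\sigma_p e^{\mu_p}/(\zeta_p e^\Phi)$ is $O_p(\sigma_p) = O_p(1)$. The main obstacle is the $p > 2/3$ regime, where the target accuracy $o_p(\sigma_p)$ is exponentially sharp: what rescues us is the numerical coincidence $\lambda_{\mu_p} \asymp \sigma_p^2$, so the trivial first-moment bound already beats the required rate without any Poisson approximation needed, and a careful bookkeeping of the Chebyshev concentration rates for the numerators defining $q$ (sums of mutually independent $\varphi$-quadratics) completes the argument.
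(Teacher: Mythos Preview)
Your proposal is correct and follows essentially the same route as the paper: the cardinality decomposition and birthday reformulation, the restriction to a Poisson window around $\mu_p$, the Stein--Chen approximation for $N_{\mathrm{Col}}$ at $p=\tfrac23$ with parameter converging to $\tfrac14$, and the crucial observation that for $p>\tfrac23$ the first-moment bound $\pi(\mathrm{Col}_m)\le\lambda_m=O_p(\sigma_p^2)=o_p(\sigma_p)$ already beats the required rate. The only cosmetic differences are that the paper centers its window at the deterministic $\mu_p$ with width $\mu_p^{0.6}$ rather than your random $\Phi$ with width $\sqrt{\Phi}\log d$, and it splits $N_{\mathrm{Col}}=N_{\mathrm{Rep}}+N_{\mathrm{Nbr}}$ and applies Stein--Chen only to $N_{\mathrm{Rep}}$ (showing $N_{\mathrm{Nbr}}\to 0$ separately), whereas you propose to treat $N_{\mathrm{Col}}$ directly; both variants work and the bookkeeping is the same.
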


As outlined in Section \ref{sec:idea}, this can be recast as a birthday problem but we first prove that it suffices to
restrict to sets $S$ of size within a certain window which will be followed by showing that the non-collision
probability is essentially $\zeta_p$ as long as the number of samples drawn is within that window.

\subsection{Relevant values of $m$}
\label{sec:relevant}
We define $\Good^\Even_m$ to be the collection of sets $S \in \Good^\Even$ with $|S| = m$,
 and $\Win = \Bra{m : |m - \mu_p| \leq \mu_p^{0.6}}$.
\begin{lemma}
\label{lem:relevantalpha}
Suppose $p \geq \fr{2}{3}$.
Then
\begin{align}
    \label{eq:alphasforpsi}
    \fr{\sum_{m \notin \Win}\fr{\Phi^m}{m!}}
    {\sigma_p \exp(\mu_p)} \pto 0.
\end{align}
Additionally, since
$\fr{\Phi^m}{m!} \geq \sum_{S \in \Good^\Even_m} 2^{-\N(S)}$
for every $m$, we also have
\begin{align}
    \label{eq:alphasforgood}
    \fr{\sum_{m \notin \Win} \sum_{S \in \Good^\Even_m} 2^{-\N(S)}}
    {\sigma_p \exp(\mu_p)} \pto 0.
\end{align}
\end{lemma}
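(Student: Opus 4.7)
The key identity $e^{\Phi} = \sum_m \Phi^m/m!$ lets us rewrite
\begin{align*}
\sum_{m \notin \Win} \frac{\Phi^m}{m!} = e^{\Phi} \cdot \P(X \notin \Win \mid \Phi),
\end{align*}
where, conditional on $\Phi$, $X \sim \Pois(\Phi)$. Dividing both sides by $\sigma_p e^{\mu_p}$, the quantity we wish to control becomes $\tfrac{e^{\Phi - \mu_p}}{\sigma_p} \cdot \P(X \notin \Win \mid \Phi)$, so the problem reduces to pitting an at most exponentially (in $d$) large prefactor against a Poisson tail probability, which will turn out to decay doubly exponentially in $d$.

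First, I would establish that $\Phi$ is very tightly concentrated around $\mu_p$. By Lemma \ref{lem:jointclt}, $(\Phi - \mu_p)/\sigma_p$ is asymptotically standard Gaussian, so the event $A = \{|\Phi - \mu_p| \leq \sigma_p \log d\}$ satisfies $\P(A) \to 1$. On $A$ two things happen: (a) $e^{\Phi - \mu_p} \leq d^{\sigma_p}$, which is at most polynomial in $d$ since $\sigma_p \leq 1/\sqrt{2}$ for $p \geq 2/3$; and (b) since $\mu_p = \tfrac{1}{2}(2-p)^d$ grows exponentially in $d$ whereas $\sigma_p \log d$ grows only polylogarithmically, we have $\sigma_p \log d \ll \mu_p^{0.6}$, so every $m \notin \Win$ satisfies $|m - \Phi| \geq \tfrac{1}{2}\mu_p^{0.6}$.

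Next, applying a Bennett/Chernoff bound for a $\Pois(\Phi)$ random variable with $\Phi \leq 2\mu_p$ gives, on $A$,
\begin{align*}
\P(X \notin \Win \mid \Phi) \leq 2\exp\!\left(-\frac{(\mu_p^{0.6}/2)^2}{2(\Phi + \mu_p^{0.6}/2)}\right) \leq 2\exp(-c\mu_p^{0.2})
\end{align*}
for some absolute constant $c > 0$. Since $\mu_p$ is exponentially large in $d$, this decays doubly exponentially. Combining, on $A$,
\begin{align*}
\frac{e^{\Phi - \mu_p}}{\sigma_p} \cdot \P(X \notin \Win \mid \Phi) \leq \frac{d^{\sigma_p}}{\sigma_p} \cdot 2e^{-c\mu_p^{0.2}},
\end{align*}
and the prefactor $\sigma_p^{-1} \leq \sqrt{2}\,(2/(4-3p))^{d/2}$ is at most exponential in $d$, so the right-hand side tends deterministically to $0$. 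Since $\P(A) \to 1$, this yields \eqref{eq:alphasforpsi}, and \eqref{eq:alphasforgood} then follows immediately from the pointwise inequality $\sum_{S \in \Good^\Even_m} 2^{-\N(S)} \leq \Phi^m/m!$ noted in the statement.

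The argument is essentially a clean comparison of scales and no genuine difficulty arises; the one subtle point is to choose the concentration window for $\Phi$ (here $\sigma_p \log d$) narrow enough to keep $e^{\Phi - \mu_p}$ polynomially bounded, yet still of probability tending to $1$, which is exactly what the joint CLT of Section \ref{sec:jointclt} provides.
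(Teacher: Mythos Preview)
Your proposal is correct and follows essentially the same approach as the paper: interpret the tail sum as $e^{\Phi}$ times a Poisson tail probability, use concentration of $\Phi$ around $\mu_p$ to translate the window condition, and then let the doubly-exponential Poisson tail decay beat the at-most-exponential prefactors $e^{\Phi-\mu_p}$ and $\sigma_p^{-1}$. The only cosmetic difference is that the paper obtains concentration of $\Phi$ via Chebyshev (choosing $\eps=\mu_p^{-0.01}$ so that $|\Phi-\mu_p|\le \sigma_p\mu_p^{0.005}$ with probability $1-\eps$), whereas you invoke the CLT to get the window $\sigma_p\log d$; both choices are well within the slack needed, and Chebyshev is marginally more elementary since it only uses the variance computation rather than the full Stein's-method argument.
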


Thus, the above lemma shows that we only need to consider values of $m$ within
a slightly larger than $\sqrt{\mu_p}$-sized window around $\mu_p$. This stems from the fact that a Poisson random
variable with mean $\theta$ (which in our case will be $\Phi$) is concentrated around $\theta$ at scale $\sqrt{\theta}.$
We record a concentration result below (the proof is provided in the Appendix; see Lemma \ref{lem:poissontails}) that
will imply the above statement. There exists $c>0$ such that, if $M \sim \Pois(\theta)$ where $\theta\ge 1, $  and 
    $t \leq c \sqrt{\theta}$, then    \begin{align}\label{poissontail456}
		\Px{|M - \lambda| > t \sqrt{\lambda}} \lesssim \exp\Rnd{- \fr{t^2}{3}}.
    \end{align}

\begin{proof}[Proof of Lemma \ref{eq:alphasforpsi}]
Choose $\eps=\frac{1}{{\mu_p}^{.01}}.$
By Chebyshev's inequality, with probability at least $1-\eps$, we have 
$|\Phi - \mu_p| \leq \fr{\sigma_p}{\sqrt{\eps}}$.
On this event, $|m - \mu_p| > \mu_p^{0.6}$ implies that
$|m - \Phi| > \fr{1}{4} \mu_p^{0.1} \sqrt{\Phi}$ for $d$ large enough,
since $\mu_p \gg 1 \geq \sigma_p$ for $p \geq \fr{2}{3}$.
Applying \eqref{poissontail456} we obtain,
\begin{align*}
    \sum_{m \notin \Win} \fr{\Phi^m}{m!}
    &\leq \sum_{m : |m - \Phi| > \fr{1}{4} \mu_p^{0.1} \sqrt{\Phi}}
        \fr{\Phi^m}{m!} \\
    &= \exp(\Phi) \cdot \Px{\Abs{\Pois(\Phi) - \Phi} 
        > \fr{1}{4} \mu_p^{0.1}\sqrt{\Phi}} \\
    &\lesssim \exp(\Phi) \cdot 2 \exp\Rnd{- \fr{\Rnd{\fr{1}{4} \mu_p^{0.1}}^2}{3}} \\
    &\lesssim \Exp{\mu_p + \fr{\sigma_p}{\sqrt{\eps}} - \fr{\mu_p^{0.2}}{48}}\\
    &\lesssim \Exp{\mu_p - \fr{\mu_p^{0.2}}{50}}
\end{align*}
for all large $d,$ with probability at least $1 - \eps$, where in the last inequality we used that
$\fr{\sigma_p}{\sqrt{\eps}}\ll \mu_p^{0.2}$ since $\mu_p = \exp(c_p d)$ and $1 \geq \sigma_p \geq \exp(-C_p d)$
for some constants $c_p, C_p > 0.$ This implies 
\begin{align*}
    \fr{\sum_{m \notin \Win} \fr{\Phi^m}{m!}}{\sigma_p \exp(\mu_p)}
    &\lesssim \Exp{-\fr{\mu_p^{0.2}}{100}}
\end{align*}
for large enough $d$, with probability at least $1-\eps$.
\end{proof}

\subsection{Poisson approximation in the birthday problem}
\label{sec:poisson}

The primary result of this section is Lemma \ref{lem:poisson}, which 
investigates the collision probabilities in the birthday problem, as indicated in Section \ref{sec:idea}. 
Let $\pi\coloneq \pi_{\varphi}$ be the random measure on $\Even$ satisfying $\pi(v)\propto \varphi_v$ and let
$\xi_1,\xi_2,\ldots, \xi_m$ be $m$ i.i.d. samples drawn from $\pi.$ Let the events $\Col_n$ and $\Rep_n$ be defined as
    \begin{align*}
        \Col_n &= \{ \exists 1 \leq i < j \leq n : \xi_i = \xi_j, \text{ or } \xi_i \sim \xi_j \},\\
        \Rep_n &= \{ \exists 1 \leq i < j \leq n : \xi_i = \xi_j \},
    \end{align*}
where $u \sim_2 v$ if $u$ and $v$ are 2-neighbors in $Q_d$ (as a convention, $u \not\sim_2 u$). We shall also be
interested in the counts:
\begin{align}\label{impquant23}
    N_\Col = \sum_{1 \leq i < j \leq n} \1{\xi_i = \xi_j, \text{ or } \xi_i \sim_2 \xi_j}, 
    N_\Rep = \sum_{1 \leq i < j \leq n} \1{\xi_i = \xi_j}, \text{ and }
    N_\Nbr= \sum_{1 \leq i < j \leq n} \1{\xi_i \sim_2 \xi_j}.
\end{align}
Note that these are random variables whose distributions are measurable with respect to the random variables
$\{\varphi_v\}$.  The most important of these, and the subject of the following lemma, is the law of $N_\Col$ which we
denote by $\cL_{n, d}$. The lemma states that with probability close to $1$, the variables $\{\varphi_v\}$ are such that,
$\cL_{n, d}$ is exponentially close, in total variation distance, to a Poisson random variable of parameter $1$ or
$\f14$ depending on whether $p>\f23$ or $p=\f23$ respectively for a certain class of values of $n.$

\begin{lemma}
    \label{lem:poisson}There exists $c=c_p>0,$ such that, with probability at least $1-\exp{\left(-cd\right)},$ the following events hold.\\

\noindent
 $\bullet$  For $p = \f 23$, 
    \[
        \TV{\cL_{n, d} - \Pois\Rnd{\f 14}} \le \exp{\left(-cd\right)}
    \]
    and for $p > \f 23$,
\[
        \TV{\cL_{n, d} - \del_0} \le \exp{\left(-cd\right)},
\]
as $d \to \infty$ as long as $n \in \Win$. \\

As a consequence, for $n$ as above, with probability at least $1-\exp{\left(-cd\right)},$\\
    
\noindent
$\bullet$   $|(1-\pi(\Col_n)) - e^{-{\f 14}}|$ for $p=\f23$ and $|\pi(\Col_n) |$ when $p > \f23$, are bounded
by $\exp{(-cd)}$ respectively. In the latter case, an explicit bound is also given by $\frac{d^2n^2}{\Phi^2}\sum_{v\in
\cE}\varphi_v^2.$ 
\end{lemma}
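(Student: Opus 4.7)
The plan is (i) to condition on the random measure $\pi$ (equivalently on $\{\varphi_v\}_{v\in\cE}$); (ii) to compute $\lambda_\pi := \E[N_\Col \mid \pi]$ in closed form and show via concentration that $\lambda_\pi \to 1/4$ at $p = 2/3$ and $\lambda_\pi \to 0$ exponentially for $p > 2/3$; (iii) to pass from the mean estimate to the claimed total-variation bounds using Markov's inequality (for $p > 2/3$) and a Chen--Stein Poisson approximation (at $p = 2/3$).

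For (ii), the i.i.d.\ sampling of $\xi_1, \ldots, \xi_n$ from $\pi$ yields
\[
\lambda_\pi \;=\; \binom{n}{2} \Phi^{-2}\!\left(\sum_{v\in\cE} \varphi_v^{\,2} + \sum_{\{u,v\}:\,u\sim_2 v} \varphi_u \varphi_v\right).
\]
Because $\varphi_u$ and $\varphi_v$ depend on disjoint edge-incidence sets for distinct $u,v \in \cE$, the family $\{\varphi_v\}_{v\in\cE}$ is mutually independent. Lemma~\ref{lem:xvmoments} gives $\E\sum_v \varphi_v^2 = \f12(2 - \f{3p}{2})^d$ and $\E\sum_{u\sim_2 v} \varphi_u\varphi_v = 2^{d-1}\binom{d}{2}(1 - p/2)^{2d}$; at $p = 2/3$ the latter is $O(d^2 (8/9)^d)$, exponentially smaller than the former. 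A variance computation together with Chebyshev gives exponentially small relative fluctuations in both sums. Combined with $\Phi = \mu_p (1 + o(1))$ (Lemma~\ref{lem:exvarcov}) and $n = \mu_p(1 + O(\mu_p^{-0.4}))$ from $n \in \Win$, we obtain $\binom{n}{2}/\Phi^2 = \f12(1 + o(1))$, so that with probability $1 - e^{-cd}$,
\[
\lambda_\pi \;=\; \f14 \left(2 - \f{3p}{2}\right)^d (1 + O(e^{-cd})),
\]
which is $\f14 + O(e^{-cd})$ at $p = 2/3$ and $O(e^{-cd})$ for $p > 2/3$.

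For step (iii) with $p > 2/3$, Markov's inequality applied to the conditional law yields $\pi(\Col_n) \le \lambda_\pi$, whence $\TV{\cL_{n,d} - \del_0} \le \lambda_\pi \le e^{-cd}$. The explicit estimate $\f{d^2 n^2}{\Phi^2}\sum_v \varphi_v^2$ follows from AM--GM, $\varphi_u \varphi_v \le \f12(\varphi_u^2 + \varphi_v^2)$, combined with the fact that each vertex of $\cE$ has at most $\binom{d}{2} \le d^2$ two-neighbors. At $p = 2/3$ I would invoke a Chen--Stein bound for sums of weakly dependent indicators (e.g.\ \cite{ross}) applied to $N_\Col = \sum_{i < j} I_{ij}$ with $I_{ij} = \1{\xi_i = \xi_j \text{ or } \xi_i \sim_2 \xi_j}$; the dependency neighborhoods $B_{ij} = \{I_{kl} : \{k,l\} \cap \{i,j\} \ne \emptyset\}$ have size $O(n)$. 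The resulting TV bound splits into a term $\sum p_{ij} p_{kl}$ over dependent pairs, which is $O(n^3) \cdot (\lambda_\pi / \binom{n}{2})^2 = O(1/n)$, and a triple-correlation term $\sum \E[I_{ij} I_{kl} \mid \pi]$ over pairs sharing an index, which reduces to $O(n^3) \sum_u \pi(u) q(u)^2$ with $q(u) = \pi(u) + \sum_{v \sim_2 u} \pi(v)$. Expanding the square and using $|N_2(u)| \le d^2$ bounds this by $O(n^3 d^4) \sum_v \pi(v)^3$; concentration of $\sum_v \varphi_v^3$ at its mean $\f12(2 - \f{7p}{4})^d = \f12(5/6)^d$ at $p = 2/3$ then gives a bound of polynomial-in-$d$ times $(5/6)^d$, which is exponentially small.

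The main obstacle will be controlling this triple-correlation error, as it requires quantitative spreading of the random sampling measure $\pi$ (i.e., a sufficiently sharp upper bound on $\sum_v \pi(v)^3$). The clean way to handle it is the third-moment concentration estimate for $\sum_v \varphi_v^3$ sketched above, which runs parallel to the mean and variance computations of step (ii) and, after dividing by $\Phi^3 \asymp \mu_p^3$ (whose scale cancels the $n^3$ prefactor), yields the claimed exponential decay of the overall Chen--Stein bound.
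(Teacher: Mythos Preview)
Your proposal is correct and follows essentially the same strategy as the paper: concentration of the conditional mean $\lambda_\pi$, Markov's inequality for $p > \tfrac23$, and Chen--Stein at $p = \tfrac23$. The one tactical difference is that the paper first splits $N_\Col = N_\Rep + N_\Nbr$, disposes of $N_\Nbr$ by a first-moment bound (using $\E\sum_{u\sim_2 v}\varphi_u\varphi_v \lesssim d^2 2^d(1-p/2)^{2d}$), and then applies Chen--Stein only to the simpler indicators $\chi_{ij} = \1{\xi_i = \xi_j}$. This makes the triple-correlation term cleaner---it becomes exactly $n^3\sum_v\pi(v)^3$ without the $d^4$ factor you pick up from carrying the 2-neighbor terms through the AM--GM step---though your route still yields exponential decay, so both work. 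One small omission: after Chen--Stein gives closeness to $\Pois(\lambda_\pi)$, you still need a line (e.g.\ $\TV{\Pois(\lambda_\pi) - \Pois(1/4)} \lesssim |\lambda_\pi - 1/4|$) to pass to $\Pois(1/4)$; the paper invokes this explicitly.
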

By symmetry, the natural counterpart of the above statement holds for the odd side.

Again, as in the proof of the CLT result in Lemma \ref{lem:jointclt}, in the presence of weak dependence, Stein's
method, now for \emph{Poisson convergence} seems particularly suitable. More precisely, we will invoke the following
quantitative total-variation bound, derived via the Stein-Chen technique. For convenience, we reproduce the required
statement from \cite{ross}. A more comprehensive treatment of the result may be found there, or in the survey article
\cite{chatterjee}. 
\begin{lemma}[\cite{ross}*{Theorem 4.7}]
    \label{lem:stein}
	Let $\chi_1, \ldots, \chi_n$ be indicator variables with $\P(\chi_i) = 1 = p_i, \widetilde{M} = \sum_i \chi_i$, and
	$\t = \E \widetilde{M} = \sum_i
    p_i$. For each $i$, let $\cN(i) \sse \{1, \ldots, n\}$ be the {\it dependency neighborhood}, i.e., $\chi_i$ is
    independent of $\{\chi_j : j \notin \cN(i)\}$. If $p_{ij} \df \E[\chi_i \chi_j]$ and $M \sim \Pois(\t)$, then 
    \[
		\TV{\widetilde{M} - M} \leq \min(1, \t^{-1}) \Rnd{\sum_{i = 1}^n\sum_{j \in
        \cN(i) \setminus \{i\}} p_{ij}+ \sum_{i = 1}^n \sum_{j \in \cN(i)} p_i p_j}.
    \]
\end{lemma}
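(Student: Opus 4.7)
The plan is to prove this via the Stein--Chen method for Poisson approximation, following the pattern of Barbour--Holst--Janson (and as summarized in \cite{ross}); I would regard the role of this sketch as recording the main steps. First I would introduce the Stein operator $\cA$ acting on bounded $f\colon \bbZ_{\ge 0} \to \bbR$ by $(\cA f)(k) = \t f(k+1) - k f(k)$, and record the characterization that a $\bbZ_{\ge 0}$-valued $M$ has law $\Pois(\t)$ iff $\E[(\cA f)(M)] = 0$ for every bounded $f$. For each $A \sse \bbZ_{\ge 0}$ the associated Stein equation
\[
\t f_A(k+1) - k f_A(k) = \1{k \in A} - \Px{M \in A}
\]
has an explicit solution $f_A$ (writable as a sum against Poisson weights), and I would quote the standard bound $\sup_A \|\Delta f_A\|_\infty \le \min(1, \t^{-1})$ on the forward difference $\Delta f_A(k) = f_A(k+1) - f_A(k)$. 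Since $\TV{\widetilde M - M} = \sup_A |\Px{\widetilde M \in A} - \Px{M \in A}|$, it then suffices to bound $|\E[(\cA f_A)(\widetilde M)]|$ uniformly in $A$.

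Next I would exploit the dependency structure. Writing $\E[\widetilde M f_A(\widetilde M)] = \sum_i \E[\chi_i f_A(\widetilde M)]$ and decomposing $\widetilde M = U_i + V_i$ with $U_i = \sum_{j \in \cN(i)} \chi_j$ and $V_i = \sum_{j \notin \cN(i)} \chi_j$, the dependency-neighborhood hypothesis (in the strong form used in \cite{ross}, where the block $\{\chi_j\}_{j \in \cN(i)}$ is independent of $\{\chi_j\}_{j \notin \cN(i)}$) gives that $V_i$ is independent of both $\chi_i$ and $U_i$. Using $\t = \sum_i p_i$ together with $p_i \Ex{f_A(V_i + 1)} = \Ex{\chi_i f_A(V_i + 1)}$ (by independence of $\chi_i$ and $V_i$), this yields the decomposition
\begin{align*}
\E[(\cA f_A)(\widetilde M)] = \sum_i p_i \Ex{f_A(\widetilde M + 1) - f_A(V_i + 1)} - \sum_i \Ex{\chi_i \bigl(f_A(\widetilde M) - f_A(V_i + 1)\bigr)}.
\end{align*}

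The final step is to telescope each bracketed difference along unit steps: the first bracket accumulates at most $U_i$ increments of $\Delta f_A$ (going from $V_i + 1$ up to $V_i + U_i + 1 = \widetilde M + 1$), while on $\{\chi_i = 1\}$ the second bracket accumulates at most $U_i - \chi_i$ such increments (going from $V_i + 1$ up to $V_i + U_i = \widetilde M$). Applying $\|\Delta f_A\|_\infty \le \min(1, \t^{-1})$ and taking absolute values gives
\[
\bigabs{\E[(\cA f_A)(\widetilde M)]} \le \min(1, \t^{-1}) \Rnd{\sum_i p_i \Ex{U_i} + \sum_i \Ex{\chi_i (U_i - \chi_i)}},
\]
and unpacking $U_i$ yields exactly $\sum_i \sum_{j \in \cN(i)} p_i p_j$ and $\sum_i \sum_{j \in \cN(i) \setminus \{i\}} p_{ij}$, matching the claim. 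The main technical ingredient, though entirely standard and invoked as a black box in this framework, is the sharp $\min(1, \t^{-1})$ estimate on $\Delta f_A$, which is proved by an explicit representation of $f_A$ against Poisson weights together with a monotonicity/comparison argument; I would cite this from the Poisson Stein--Chen literature rather than reprove it. Everything else is bookkeeping with the dependency neighborhoods.
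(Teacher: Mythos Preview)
The paper does not give its own proof of this lemma; it is quoted verbatim as a black box from \cite{ross} (Theorem~4.7 there), so there is nothing to compare against on the paper's side.

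Your sketch is the standard Stein--Chen argument and is correct. One small point: you invoke a ``strong form'' of the dependency hypothesis in which the entire block $\{\chi_j\}_{j\in\cN(i)}$ is independent of $\{\chi_j\}_{j\notin\cN(i)}$, but your argument never actually uses this. The only place independence enters is in the identity $p_i\,\Ex{f_A(V_i+1)}=\Ex{\chi_i f_A(V_i+1)}$, which requires just that $\chi_i$ is independent of $V_i$ --- exactly the hypothesis stated in the lemma. The telescoping bounds on the two bracketed differences are pointwise and need no independence at all. So you may drop the parenthetical about the strong form; the weak hypothesis suffices.
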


\begin{proof}[Proof of Lemma \ref{lem:poisson}]
    \def\Cond{\mathsf{Condition}}
    \def\Joints{\mathsf{Joints}}
    \def\Products{\mathsf{Products}}
     Starting with the observation that $N_\Col = N_\Rep + N_\Nbr$, our first goal is to assert that the claimed
	asymptotic behavior of $N_\Col$ coincides with that of $N_\Rep$, by showing that $N_\Nbr$ is much smaller simply by
	an expectation bound. 

    To see this, observe that 
    \begin{align}
		\E_{\pi} N_\Nbr &\lesssim n^2 \cdot \pi(\xi_1 \sim_2 \xi_2) \lesssim n^2 \cdot \fr{\sum_{u \sim_2 v} \varphi_u
		\varphi_v}{\Rnd{\sum \varphi_v}^2} \eqcolon Y^{(1)}_d.
    \end{align}
    Then, if $Y^{(1)}_d \to 0$,
    \[
        \TV{N_\Nbr - \del_0} = \pi(N_\Nbr > 0) \leq \E_{\pi} N_\Nbr \lesssim Y^{(1)}_d \pto 0,
    \]
	 Recall that $n$ is assumed to be in $\Win$ and hence by concentration of $\Phi=\sum_{v\in \cE} \varphi_v$, the term
	 $\frac{n^{2}}{\Phi^2}$ will converge in probability to one and thus this is equivalent to showing that  $\sum_{u
	 \sim_2 v} \varphi_u \varphi_v$ converges to zero in probability.  This will follow again by computing its expectation,
	 the rationale being that it is quite unlikely for  $\varphi_u$ and $\varphi_v$ to be large when $u\sim v.$ The
	 computations are presented at the end of the proof.
 
    Turning to $N_\Rep$, let us first consider the case of $p = \f 23$. 
    We will apply Lemma \ref{lem:stein} to the random variables $\chi_{ij} \df \1{\xi_i = \xi_j}$,
    so that $N_\Rep = \sum_{i < j} \chi_{ij}$. The dependency structure is as follows: for fixed $i < j$, $\chi_{ij}$ is
    independent of $\{\chi_{k\ell}\}_{\{k, \ell\}}$, where the index is over all $\{k, \ell\}$ which do not overlap with
    $\{i, j\}$, i.e., $\{k, \ell\} \cap \{i, j\} = \varnothing$. The parameter $\t$ for our
    approximating $\Pois$, at $d$ (and $n$,
    which is determined by $d$)
    is then given by
    \begin{align}
        \label{eq:thetadef}
        \t = \t_d = \sum_{i < j} \pi(\chi_{ij} = 1) = \sum_{i < j} \pi(\xi_i = \xi_j) = \binom{n}{2} \fr{\sum \varphi_v^2}{(\sum \varphi_v)^2}.
    \end{align}
    
    Lemma \ref{lem:stein} describes the total-variation error in this approximation as 
    \[
        \TV{\cL_{n, d} - \Pois(\t)} \leq \min(1, \t^{-1}) \Rnd{{\mathrm{I}} + {\mathrm{II}}}
    \]
    where ${\mathrm{I}}$ and ${\mathrm{II}}$ are the two terms in the error bound, respectively. We compute them now:
    \begin{align*}
        {\mathrm{I}} = \sum_{\substack{i < j, k < \ell \\ (k, \ell) \in \cN((i, j)) \setminus \{(i, j)\}}} \pi(\chi_{ij} = 1,
        \chi_{k\ell} = 1)
                \lesssim n^3 \cdot \pi(\xi_1 = \xi_2 = \xi_3) 
                \lesssim n^3 \fr{\sum \varphi_v^3}{(\sum \varphi_v)^3} \df Y^{(2)}_d,
    \end{align*}
    and 
    \begin{align*}
        {\mathrm{II}} = \sum_{\substack{i < j, k < \ell \\ (k, \ell) \in \cN((i, j)) }} \pi(\chi_{ij} = 1)\pi(\chi_{k\ell}
        = 1) 
                  \lesssim n^3 \cdot \pi(\xi_1 = \xi_2) \pi(\xi_2 = \xi_3) 
                  \lesssim n^3 \Fr{\sum \varphi_v^2}{(\sum \varphi_v)^2}^2 \df Y^{(3)}_d.
    \end{align*}

    In summary, 
    \begin{itemize}
        \item if $Y^{(1)}_d \pto 0$, $\TV{N_\Nbr - \del_0} \pto 0$, 
        \item if $Y^{(2)}_d, Y^{(3)}_d \pto 0$, $\TV{N_\Rep - \Pois(\t)} \pto 0$,
		\item if $\t \pto \t_0$ for some $\t_0 > 0$, then $\TV{\Pois(\t_d) - \Pois(\t_0)} \pto 0$ (This is standard. A
			bound for concreteness is recorded in the appendix, as Lemma
            \ref{lem:poissontv}).
    \end{itemize}

    Thus, if all four convergences hold, triangle inequality for total-variation implies 
    \[
        \TV{N_\Col - \Pois(\t_0)} \pto 0,
    \]
    finishing the proof, at least for $p = \f 23$. We now proceed to implement this. 
    First observe that by Chebyshev's inequality, $\Phi = \sum_v \varphi_v$ satisfies that with probability at least $1-\exp{(-cd)},$
\begin{align}\label{compar2344}
       \left|\fr{\Phi}{\Ep}- 1\right|\le \exp{(-cd)}, 
    \end{align}
    since $\sqrt{\Var(\Phi)}$ is exponentially in $d$ smaller than $\Ep$ for all $p$. Moreover, since $n\in \Win,$ we also have 
    \begin{align}\label{compar2345}
    \left|\frac{n}{\Ep}-1\right|\le \exp{(-cd)}.
    \end{align}

    Further, again by Chebyshev's inequality at $p = \f 23$, 
    \[
        \left|\sum_v \varphi_v^2 -{\f 12}\right| \le \exp{(-cd)},
    \]
	as $\E[\sum \varphi_v^2] = 2^{d - 1} (1 - 3p/4)^d = {\f 12}$ and $\Var\Rnd{\sum_v \varphi_v^2} = \sum_v \Var(\varphi_v^2) \leq 2^{d - 1}
    (3/8)^d$. Thus, putting the above together we get that with probability $1-\exp{(-cd)},$
    
    $$|\t_d-\t_0|\le \exp{(-cd)}$$
where     \begin{align}
        \label{eq:theta1}
        \t_d &\coloneq \fr{1}{2} \cdot \fr{n(n - 1)}{\Phi^2} \cdot \sum \varphi_v^2, \text{ and }
         \t_0\coloneq{\f 14}.  
    \end{align}
    It is this ${\f 14}$ that appears in the statement of the lemma.

We now establish the bounds on $Y^{(1)}_d$, $Y^{(3)}_d$, $Y^{(3)}_d$ with explicit $p$ dependence which will turn out to
be handy in the forthcoming analysis of the $p>\f23$ case as well. 
For $Y^{(1)}_d$, observe that
\begin{align}
        \label{eq:nbrx}
        \Ex{\sum_{u \sim_2 v} \varphi_u \varphi_v} \lesssim d^2 2^d (1 - p/2)^{2d}
    \end{align}
which is exponentially small in $d$ as long as $p>2-\sqrt 2,$ and hence in particular for the entirety of the interval $[\f23,1).$

For $Y^{(2)}_d$, it suffices to observe that 
    \[
        \Ex{\sum_v \varphi_v^3} \lesssim 2^d (1 - 7p/8)^d \text{ which at }p=\f23 \text{ is }  2^d (5/12)^d.
    \]

    Finally, for $Y^{(3)}_d$, a straightforward algebra shows that   \[
        Y^{(3)}_d = \Fr{n}{\Phi}^4 \Fr{\sum \varphi_v^2}{\sqrt{n}}^2,
    \] and further note that since $n \geq (1 - o(1)) \Ep \gtrsim (1 - o(1)) (2 - p)^d$, at $p = \f 23$ we have
    \[
        \Ex{\fr{\sum_v \varphi_v^2}{\sqrt{n}}} \lesssim (2 - 3p/2)^d \cdot (1 - o(1)) \cdot (2 - p)^{-d/2}.
    \] 
Simple applications  of Markov's inequality, along with the exponentially decaying bounds on the above expectations
conclude the proof for $p = \f 23$.

    For $p > \f 23$, the argument is even simpler. By a straightforward  calculation
    \begin{align*}
        \E_\pi N_\Nbr +\E_\pi N_\Rep &\lesssim d^2n^2 \pi(\xi_1 = \xi_2) \\
                      &= d^2n^2 \fr{\sum \varphi_v^2}{(\sum \varphi_v)^2} 
                      = d^2\Fr{n}{\Phi}^2 \cdot \sum \varphi_v^2.
    \end{align*} 
	The first inequality follows by observing that $\pi(\xi_1 \sim \xi_2)=\frac{\sum_{u\sim v}\varphi_u\varphi_v}{\Phi^2}\le
	d^2 \frac{\sum_{u}\varphi^2_u}{\Phi^2}$ using the simple inequality $\varphi_u\varphi_v\le \frac{\varphi^2_u+\varphi^2_v}{2}.$ The
	proof is now complete on plugging in the earlier comparison between $n$ and $\Phi$ from \eqref{compar2344} and
	\eqref{compar2345}, and observing that $\E[\sum \varphi_v^2]$  is, up to constants, $2^d (1 - 3p/4)^d$, which is
	exponentially small when $p > \f 23$.

\end{proof}

\subsection{Proof of Lemma \ref{lem:iop_birthday234}}
\label{sec:proof_of_iop_birthday}

Here we combine the results of this section in order to prove Lemma \ref{lem:iop_birthday234}.
We start with a simple lemma that will be useful multiple times later.

\begin{lemma}\label{useful245}
Both $\fr{\exp(\Phi)}{\exp(\mu_p)}$ and its inverse are  bounded in probability. 
\end{lemma}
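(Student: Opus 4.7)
The plan is to leverage the CLT already established in Lemma \ref{lem:jointclt} together with the fact that $\sigma_p$ is uniformly bounded for $p \in [\tfrac 23, 1)$. Concretely, it suffices to show that $\Phi - \mu_p$ is tight as $d \to \infty$, since then $\exp(\Phi - \mu_p)$ and $\exp(\mu_p - \Phi) = \exp(-(\Phi - \mu_p))$ are both tight by the continuous mapping theorem applied to $x \mapsto e^x$ and $x \mapsto e^{-x}$.

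To establish tightness of $\Phi - \mu_p$, I would split into the two cases.  For $p > \tfrac 23$, Lemma \ref{lem:exvarcov} gives $\Var(\Phi) = (1+o(1))\sigma_p^2$ with $\sigma_p \to 0$ exponentially fast, so by Chebyshev $\Phi - \mu_p \pto 0$, which is a much stronger statement than tightness and indeed shows $\exp(\Phi - \mu_p) \pto 1$.  For $p = \tfrac 23$, we have $\sigma_p = 1/\sqrt{2}$, and Lemma \ref{lem:jointclt} (or just its marginal version) gives
\[
    \Phi - \mu_p = \sigma_p \cdot \fr{\Phi - \mu_p}{\sigma_p} \weakto \fr{1}{\sqrt 2}\, W,
\]
where $W \sim \Nor{0}{1}$. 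Weak convergence of the shifted variables implies they form a tight family, which is exactly the tightness of $\Phi - \mu_p$ that we require.

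Combining both cases, $\Phi - \mu_p$ is tight, hence so is $\pm(\Phi - \mu_p)$, and therefore $\exp(\Phi - \mu_p)$ and $\exp(\mu_p - \Phi)$ are each tight as well. This is precisely the statement that $\exp(\Phi)/\exp(\mu_p)$ and its inverse are bounded in probability. No real obstacle arises here: the lemma is essentially a reformulation of the CLT plus the trivial observation that $\sigma_p$ is bounded on $[\tfrac 23, 1)$, and its role later is merely to let us pass factors of $\exp(\Phi)/\exp(\mu_p)$ freely through $\pto 0$ statements when proving Lemma \ref{lem:iop_birthday234}.
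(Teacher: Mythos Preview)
Your proof is correct, but it is more elaborate than necessary. The paper's own argument is a one-liner: since $\E[\Phi]=\mu_p$ and $\Var(\Phi)=O(1)$ uniformly for $p\ge \tfrac23$ (by Lemma~\ref{lem:exvarcov}), Chebyshev's inequality alone gives tightness of $\Phi-\mu_p$, with no need to split into cases or invoke the CLT. Your case split is harmless but redundant: in the $p=\tfrac23$ case you appeal to Lemma~\ref{lem:jointclt} to get weak convergence and hence tightness, whereas the direct variance bound $\Var(\Phi)=(1+o(1))\sigma_p^2=\tfrac12(1+o(1))$ already yields tightness via Chebyshev without any distributional input. The upshot is the same, and your observation that this lemma exists solely to pass factors of $\exp(\Phi)/\exp(\mu_p)$ through $\pto 0$ statements is exactly right.
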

\begin{proof}
This is immediate since $\E(\Phi)=\mu_p$ and $\Var(\Phi)=O(1)$.
\end{proof}

Having recorded this we now proceed to using the results  of the previous subsection to show that for $m \in \Win$, the 
sum $\sum_{S \in \Good^\Even_m} 2^{-\N(S)}$ is well-approximated by $\fr{\Phi^m}{m!}$ times
the probability of non-collision when sampling $m$ points from the random probability measure
$\pi$.

\begin{lemma}
    \label{lem:goodisnoncollision}
    Let $\zeta_p = 1$ for $p > \fr{2}{3}$ and $\zeta_{\f 23} = e^{-{\f 14}}$.
    We have
    \begin{align*}
        \sum_{m \in \Win} \fr{\Abs{\sum_{S \in \Good^\Even_m} 2^{-\N(S)} - \zeta_p \fr{\Phi^m}{m!}}}
            {\sigma_p \exp(\mu_p)} \pto 0.
    \end{align*}
\end{lemma}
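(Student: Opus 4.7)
The plan is to exploit the exact identity
\[
\sum_{S \in \Good^\Even_m} 2^{-\N(S)} \;=\; \frac{\Phi^m}{m!}\bigl(1 - \pi(\Col_m)\bigr),
\]
which was established in the setup just before Lemma \ref{lem:iop_birthday}, so that each summand on the left-hand side of the desired display can be written as
\[
\frac{\Phi^m}{m!}\,\bigl|(1 - \pi(\Col_m)) - \zeta_p\bigr|.
\]
Thus the problem reduces to controlling $|(1-\pi(\Col_m)) - \zeta_p|$ uniformly over $m \in \Win$, and then summing $\Phi^m/m!$ over $m$ using the trivial bound $\sum_{m \geq 0} \Phi^m/m! = \exp(\Phi)$.

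For the uniform control, I would directly invoke the second bullet of Lemma \ref{lem:poisson}, noting that the high-probability event on which those bounds hold is defined purely in terms of concentration of $\sum_v \varphi_v$, $\sum_v \varphi_v^2$, $\sum_v \varphi_v^3$, and $\sum_{u \sim_2 v} \varphi_u \varphi_v$ around their expectations, none of which depend on $m$. Hence on a single event of probability at least $1 - \exp(-c_p d)$, the bounds hold simultaneously for every $m \in \Win$. In the case $p = \tfrac 23$, this gives $|(1-\pi(\Col_m)) - e^{-1/4}| \leq \exp(-c d)$; in the case $p > \tfrac 23$, the explicit estimate gives $\pi(\Col_m) = O(d^2 \sigma_p^2)$ since $\sum_v \varphi_v^2$ concentrates at $\sigma_p^2$ and $n^2/\Phi^2 \to 1$ for $n \in \Win$.

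Putting this together, the whole sum is bounded on that event by
\[
\varepsilon_d \cdot \frac{\exp(\Phi)}{\sigma_p \exp(\mu_p)},
\]
where $\varepsilon_d = \exp(-cd)$ when $p = \tfrac23$ and $\varepsilon_d = O(d^2 \sigma_p^2)$ when $p > \tfrac 23$. By Lemma \ref{useful245}, $\exp(\Phi)/\exp(\mu_p)$ is $O_{\bbP}(1)$, so it suffices to show that $\varepsilon_d / \sigma_p \to 0$. For $p = \tfrac23$ this is immediate since $\sigma_p = 1/\sqrt{2}$ is a constant and $\varepsilon_d$ decays exponentially; for $p > \tfrac 23$ we get $\varepsilon_d / \sigma_p = O(d^2 \sigma_p) \to 0$ because $\sigma_p = \tfrac{1}{\sqrt 2}((4-3p)/2)^{d/2}$ decays exponentially.

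The only subtle point is verifying that the high-probability event from Lemma \ref{lem:poisson} really does furnish a single bound valid simultaneously for all $m \in \Win$ (since $|\Win|$ is itself exponentially large in $d$, a union bound over $m$ would be fatal). This is what I expect to require the most careful bookkeeping, but the resolution is structural rather than quantitative: the bounds in Lemma \ref{lem:poisson} depend on $m$ only through the smooth quantity $n^2/\Phi^2$, which concentrates uniformly at $1$ over the window by Chebyshev's inequality, so no union bound over $m$ is needed. With this observation the rest of the argument is a routine chain of inequalities.
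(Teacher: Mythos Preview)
Your proof is correct and follows the same approach as the paper's: rewrite each summand via the identity $\sum_{S \in \Good^\Even_m} 2^{-\N(S)} = \tfrac{\Phi^m}{m!}(1 - \pi(\Col_m))$, control the gap $|(1-\pi(\Col_m)) - \zeta_p|$ uniformly over $m \in \Win$ using Lemma \ref{lem:poisson}, bound $\sum_m \Phi^m/m!$ by $\exp(\Phi)$, and finish with Lemma \ref{useful245}. The only minor difference is in how uniformity over $m$ is handled: the paper observes that $m \mapsto \pi(\Col_m)$ is monotone and thereby reduces to the two endpoints $m_\pm$ of $\Win$, whereas you argue (also correctly) that the high-probability event underlying Lemma \ref{lem:poisson} is $m$-independent so the bounds hold simultaneously for all $m \in \Win$.
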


\begin{proof}
First recall that
\begin{align*}
    \sum_{S \in \Good^{\cE}_m} 2^{-\N(S)}
    = \sum_{S \in \Good^{\cE}_m} \prod_{v \in S} \varphi_v
    = \fr{\Phi^m}{m!} (1 - \pi(\Col_m)).
\end{align*}
So the expression in question is
\begin{align*}
    \fr{\sum_{m \in \Win} \fr{\Phi^m}{m!} \cdot \Abs{1 - \pi(\Col_m) - \zeta_p}}
    {\sigma_p \exp(\mu_p)}.
\end{align*}
Since $\pi(\Col_m)$ is increasing in $m$ (as the chance of a collision increases as more samples are drawn), this is in the interval
\begin{align*}
    \fr{\sum_{m \in \Win} \fr{\Phi^m}{m!}}{\exp(\mu_p)}
    \cdot \frac{\Box{\min(g_{m_-}, g_{m_+}), \max(g_{m_-}, g_{m_+})}}{\sigma_p},
\end{align*}
where $m_\pm$ are defined by $\Win = [m_-, m_+]$ and the gaps $g_{m} \df \Abs{1 - \pi(\Col_m) - \ze_p}$.
Now for $p\ge \f23,$ the prefactor, which is at most
$\fr{\exp(\Phi)}{\exp(\mu_p)}$, by Lemma \ref{useful245}, is bounded in probability.
Now let us treat the case $p=\f23$ first, where $\sigma_p=\frac{1}{\sqrt2}.$  In that case by Lemma \ref{lem:poisson},
\begin{align*}
    1 - \pi(\Col_{m_+}) - \zeta_p \pto 0,
    \quad \text{and} \qquad
    1 - \pi(\Col_{m_-}) - \zeta_p \pto 0,
\end{align*}
since $m_\pm \in \Win,$ which finishes the proof. In the case $p> \f23,$ $\zeta_p=1,$ and so we have to bound
$\frac{\pi(\Col_{m_+})}{\sigma_p}$. Note however that while $\sigma_{d}$ appearing in the denominator is exponentially
small, the numerator by Lemma \ref{lem:poisson} is bounded $\frac{d^2{m_+}^2}{\Phi^2}\sum_{v\in \cE}\varphi_v^2.$ Again the
pre-factor is bounded by, say, \eqref{compar2344} and \eqref{compar2345}, while $\E[\sum \varphi_v^2]$  is, up to
constants, $\sigma_p^2$.  Thus, a simple application of Markov's inequality finishes the proof.

\end{proof}

The proof of Lemma \ref{lem:iop_birthday234} is now immediate. 

\begin{proof}[Proof of Lemma \ref{lem:iop_birthday234}]
Again we set $\Phi = \Phi_\Even$; the same proof works for $\Phi_\Odd$.
We can write the expression in the lemma as a sum of three terms, all of
which are small by the results of this section.
Indeed,
\begin{align}
    \label{eq:birthday_full_disp}
    \fr{\sum_{S \in \Good^\Even} 2^{-\N(S)} - \zeta_p \exp(\Phi_\Even)}
        {\sigma_p \exp(\mu_p)}
    &= \fr{\sum_{m \notin \Win} \sum_{S \in \Good^\Even_m} 2^{-\N(S)}}{\sigma_p \exp(\mu_p)}
        - \zeta_p \fr{\sum_{m \notin \Win} \fr{\Phi^m}{m!}}{\sigma_p \exp(\mu_p)} \\
    \intertext{\hfill (both terms converge to $0$ in probability by Lemma \ref{lem:relevantalpha})} 
    &\quad + \sum_{m \in \Win} \fr{\sum_{S \in \Good^\Even} 2^{-\N(S)} \nonumber
        - \zeta_p \fr{\Phi^m}{m!}}
        {\sigma_p \exp(\mu_p)}
    \intertext{\hfill (converges to $0$ in probability by Lemma \ref{lem:goodisnoncollision})}
    \nonumber
\end{align}
This finishes the proof of the first part. The final conclusion is then immediate, since by Lemma \ref{useful245},
$\frac{\exp(\mu_p)}{\exp(\Phi_{\cE})}$ is bounded in probability and $\sigma_p=O(1).$ 
\end{proof}

\section{Geometry of a typical independent set}
\label{sec:geometry}
In this section, we prove Theorem \ref{thm:geometry} and en route, the key Proposition \ref{lem:sampling1}. We restate
the proposition, to include the $p=\f23$ case as well as a further Poisson convergence statement, that will allow us to
prove Theorem \ref{thm:geometry} quickly. We first state a more technical version of $\AS$  covering the $p=\f23$ case
that we will continue to refer to as $\AS$ henceforth. The only difference with Definition \ref{def:sampling} is the
first step of sampling $\cH$.
\begin{definition}[Approximate sampler]
    \label{def:sampling1}
    Given a realization of the percolation $\Qp$, we sample an independent set $S \sim \AS$ as follows:
\begin{enumerate}
	\item Choose a side $\cH = \Even$ or $\Odd$ with probabilities $\frac{e^{\Phi_\cE}}{e^{\Phi_\cE}+e^{\Phi_\cO}}$ and
		$\frac{e^{\Phi_\cO}}{e^{\Phi_\cE}+e^{\Phi_\cO}}$ respectively.
	\item For each vertex $v \in \cH$, pick it independently with probability $\fr{2^{-N_p(v)}}{1 + 2^{-N_p(v)}}$, and collect these
        vertices in the set $S_1$.
    \item For each vertex $u$ in the other side $\cH^\c$ not in $N_p(S_1)$, i.e., for every $u \in \cH^\c \setminus
        N_p(S_1)$, pick it independently with probability ${\f 12}$. Collect these vertices in the set $S_2$.
    \item Output $S = S_1 \cup S_2$.
\end{enumerate}
\end{definition}

\begin{proposition}[Properties of $\AS$]
    \label{lem:sampling2} For $p\ge \f23$,
    \[
        \TV{\AS - \US} \pto 0.
    \]
Further, \[\TV{|S_1|-\Pois(\mu_p)} \pto 0.\]
\end{proposition}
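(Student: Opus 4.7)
The plan is to compute $\P(\AS = I)$ explicitly and match it to $\P(\US = I) = 1/\Cnt$ on the typical support of $\US$. Let $A^\cE$ denote the set of independent sets $I$ with $I \cap \cE \in \Good^\cE$ and $\cE$ the minority side; define $A^\cO$ analogously. By Lemma \ref{notwopoly} and \eqref{structureinput2}, $\P(\US \in A^\cE \cup A^\cO) \pto 1$, so it suffices to show both that $\AS$ concentrates on $A^\cE \cup A^\cO$ as well, and that on this set the two densities agree up to a multiplicative factor tending to $1$ in probability.

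A direct computation from Definition \ref{def:sampling1} shows that, conditional on $\cH = \cE$, any independent set $I = S_1 \cup S_2$ with $S_1 = I \cap \cE$ is produced with probability $\prod_{v \in S_1} \varphi_v \cdot \prod_{v \in \cE}(1+\varphi_v)^{-1} \cdot 2^{-(2^{d-1} - N_p(S_1))}$. The crucial algebraic identity is that if $S_1 \in \Good^\cE$, then well-separation forces $N_p(S_1) = \sum_{v \in S_1} N_p(v)$, so $\prod_{v \in S_1} \varphi_v \cdot 2^{N_p(S_1)} = 1$. Consequently, for every $I \in A^\cE$, this conditional probability equals the $I$-independent constant $2^{-2^{d-1}} \prod_{v \in \cE}(1+\varphi_v)^{-1}$; in other words, conditional on $\cH = \cE$ and $S_1 \in \Good^\cE$, $\AS$ is exactly uniform on $A^\cE$. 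A Taylor expansion of $\log(1+\varphi_v)$ paired with concentration of the moment sums $M_k = \sum_v \varphi_v^k$ (which have $\E M_2 = 1/2$ at $p = 2/3$ and are exponentially small for $k \geq 2$ when $p > 2/3$, while $\E M_k \to 0$ for $k \geq 3$ whenever $p \geq 2/3$) yields $\prod_{v \in \cE}(1+\varphi_v) \approx \zeta_p e^{\Phi_\cE}$, which matches $\sum_{S \in \Good^\cE} 2^{-N_p(S)}$ by Lemma \ref{lem:iop_birthday234}. This both gives $\P(S_1 \in \Good^\cE \mid \cH = \cE) \pto 1$ and implies the marginal match $\P(\cH = \cE) = \tfrac{e^{\Phi_\cE}}{e^{\Phi_\cE}+e^{\Phi_\cO}} \approx |A^\cE|/\Cnt$. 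The ``wrong side'' contribution $\P(\AS \in A^\cE \mid \cH = \cO)$ vanishes in probability, since under $\cH = \cO$ the set $S_1 \subseteq \cO$ has size concentrated around $\mu_p \ll 2^{d-2}$, so $\cO$ rather than $\cE$ ends up as the minority side. Combining these ingredients yields $\TV{\AS - \US} \pto 0$.

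For the Poisson claim, conditional on a fixed $\cH$, $|S_1|$ is a sum of independent Bernoullis with parameters $p_v = \varphi_v/(1+\varphi_v) \leq \varphi_v$. The sharp Barbour--Hall bound gives $\TV{|S_1| - \Pois(\tilde \Phi_\cH)} \leq \sum_v p_v^2 / \tilde \Phi_\cH \leq M_2 / \tilde \Phi_\cH$, where $\tilde \Phi_\cH = \sum_v p_v$. Since $M_2$ is bounded in probability while $\tilde \Phi_\cH \approx \Phi_\cH \approx \mu_p$ grows exponentially in $d$, this vanishes in probability. A standard comparison $\TV{\Pois(\tilde \Phi_\cH) - \Pois(\mu_p)} \lesssim |\tilde \Phi_\cH - \mu_p|/\sqrt{\mu_p}$ then completes the proof, since the numerator is bounded in probability (being of order $\sigma_p + M_2$, both $O(1)$) while $\sqrt{\mu_p}$ grows exponentially.

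The main technical obstacle is handling the exponentially large means together with the delicate $O(1)$ corrections, particularly at the critical $p = 2/3$ where $\zeta_p = e^{-1/4}$ is nontrivial. The appearance of this same constant in both $\prod_v(1+\varphi_v)$ and $\sum_{S \in \Good^\cE} 2^{-N_p(S)}$ is not a coincidence: they cancel in the ratio that controls the density of $\AS$, which is precisely what makes $\AS$ approximately uniform on $A^\cE$ even at $p = 2/3$, mirroring the role of $\zeta_p$ in Theorem \ref{thm:main}.
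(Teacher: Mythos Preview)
Your proof is correct, and shares the same skeleton as the paper's: both pivot on the algebraic identity that $\AS^\cE$ assigns the $I$-independent mass $2^{-2^{d-1}}\prod_{v\in\cE}(1+\varphi_v)^{-1}$ to each $I$ with $I\cap\cE\in\Good^\cE$, both decompose $\AS$ and $\US$ as mixtures over $\cH\in\{\cE,\cO\}$, and both reduce the Poisson claim to $|\Phi_\cH-\mu_p|/\sqrt{\mu_p}\pto 0$ at the end. The two arguments diverge in how they fill in the remaining estimates.

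For the first part, the paper verifies $\AS^\cE(\NiceE)\pto 1$ by separately ruling out the three failure modes ($|S_1|$ too large, $|S_2|$ too small, $S_1$ not $2$-separated), then matches the mixture weights via Lemmas~\ref{lem:iop_polymer1} and~\ref{lem:iop_birthday234}. You instead control the normalizing constant $\prod_v(1+\varphi_v)$ directly through the expansion $\log(1+\varphi_v)=\varphi_v-\tfrac12\varphi_v^2+O(\varphi_v^3)$, obtaining $\prod_v(1+\varphi_v)=\zeta_p e^{\Phi_\cE}(1+o_P(1))$; combined with~\eqref{ratio987} this yields $\AS^\cE(A^\cE)\pto 1$ and the weight match in one stroke. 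This is cleaner and makes transparent why the same constant $\zeta_p$ appears on both sides. (One small point you leave implicit: $A^\cE$ also requires $|S_2|>2^d/d^2$, but this only removes an exponentially small fraction of $\{I:I\cap\cE\in\Good^\cE\}$, so your normalizing-constant argument still goes through.)

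For the Poisson claim the difference is more pronounced. The paper conditions on $S_1\in\Good^\cE$, writes the resulting law $\cP(m)$ as a ratio of $\Good$-sums, and then compares term-by-term to $\Pois(\Phi)$ using Lemmas~\ref{lem:relevantalpha} and~\ref{lem:goodisnoncollision}, effectively re-using the birthday-problem machinery. Your route via the Barbour--Hall bound $\TV{|S_1|-\Pois(\tilde\Phi_\cH)}\le\sum_v p_v^2/\tilde\Phi_\cH\le M_2/\tilde\Phi_\cH$ is substantially shorter and bypasses the birthday lemmas entirely for this step. The paper's approach has the advantage of keeping all estimates within the framework already built; yours is more self-contained and would work even without the $\Win$-restriction lemmas.
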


The plausibility of the Poisson convergence statement is apparent from  Lemma \ref{lem:goodisnoncollision} which
suggests that the number of independent sets with the size of the minority side, say the intersection with $\cE,$ being
$m$ is approximately $\frac{\Phi_{\cE}^m}{m!}.$

However, before proceeding with the proof we quickly remark that in the $p>\f23$ case, the outcomes of Definitions
\ref{def:sampling} and \ref{def:sampling1} are very close in total variation norm, with high probability, which then
will allow us to simply work with the latter throughout this section.  To see this note that since $\sigma_p$
exponentially decays in $d$ when $p>\f23,$ both $\Phi_{\cE}-\mu_p$ and $\Phi_{\cO}-\mu_{d}$ converge to zero in
probability, and hence the sampling probabilities for $\cH,$ i.e., $\frac{e^{\Phi_\cE}}{e^{\Phi_\cE}+e^{\Phi_\cO}}$ and
$\frac{e^{\Phi_\cO}}{e^{\Phi_\cE}+e^{\Phi_\cO}}$ converge to $\f12$ in probability.

\begin{proof}[Proof of  Proposition \ref{lem:sampling2}]
    Recall the definition of $\Nice$ sets from \eqref{structureinput2}: a set $S$ is in $\Nice$ if 
    \begin{itemize}
        \item either, $S \cap \Even \in \Good^\Even$ and $|S \cap \Odd| > 2^d/d^2$, 
        \item or, $S \cap \Odd \in \Good^\Odd$ and $|S \cap \Even| > 2^d/d^2$.
    \end{itemize}
	Let us also call the sets of the former kind $\NiceE$ and sets of the latter kind $\NiceO$. Note that these two sets
	are disjoint. Our
    first step is to show that $\AS$ is almost uniform when restricted to $\Nice$. 

    To that end, let $S \in \Nice$, and further, without loss of generality, suppose $S \in \NiceE$. Call $S_1 = S \cap
    \Even \in \Good^\Even$, and $S_2 = S \setminus S_1$. 
    Note that $\AS$ is a mixture of $\AS^\Even$ and $\AS^\Odd$ which are the distributions given $\cH=\cE$ and $\cH=\cO$ respectively.
  We first bound  $\AS^\Odd(\NiceE)$ and its symmetric counterpart. Note that this is bounded by the probability $\AS^\Odd (|S_2|\ge \fr{2^d}{d^2}).$
Now under $\AS^\Odd$, $\E(|S_2|)=\Phi_{\cO}.$
Thus, using this reasoning for both odd and even we get,    
\begin{equation}\label{bound57}
\AS^\Odd(\NiceE)+\AS^\Even(\NiceO) \le d^2\frac{\Phi_{\cO}+\Phi_{\cE}}{2^d}.
\end{equation}
  On the other hand, conveniently, $\AS^\cE$ restricted to $\NiceE$ is uniform (the corresponding statement holds for $\cO$).
To see this note that the probability that $S\in \NiceE$ is chosen 
    by $\AS^{\cE}$ is
	\begin{align}
		\label{eq:unifapprox}
		\prod_{v \notin S_1} \frac{1}{1 + 2^{-N_p(v)}} \cdot \prod_{v \in S_1} \frac{2^{-N_p(v)}}{1 + 2^{-N_p(v)}} 
			&\cdot \P(\text{uniform subset of $\cH^\c \setminus N_p(S_1)$ is $S_2$}) \\ 
			&= \frac{\prod_{v \in S_1} 2^{-N_p(v)} \cdot \Rnd{2^{2^{d - 1} - N_p(S_1)}}^{-1}}{\prod_{v \in \Even} (1 + 2^{-N_p(v)})} \nonumber \\
			&= \frac{1}{2^{2^{d - 1}} \prod_{v \in \Even} (1 + 2^{-N_p(v)})}, \nonumber
	\end{align}
    a quantity that does not depend on $S$ since $\sum_{v \in S_1} N_p(v) = N_p(S_1)$ as $S_1$ is well-separated by
    definition. 
On the other hand $\US$ is uniform on $\Nice$ by definition. 
Further by \eqref{structureinput2},
\begin{align*}
1-\frac{|\Nice|}{\Cnt}\pto 0.
\end{align*}
We next show that the above continues to hold for  $\AS^{\cE}$ and $\AS^{\cO}$ i.e.,
\begin{align}\label{conv56789}
1-\AS^{\cE}{(\Nice^{\cE})}\pto 0, \,\,\text{and,}\,\,  1-\AS^{\cO}{(\Nice^{\cO})}\pto 0,
\end{align} 
with even simpler arguments due to its explicit description.  We will just describe the first case. Observe that the
outcome of $\AS^{\cE}$, which we call $S$, fails to be in $\Nice^{\cE}$ for three
    possible reasons:
    \begin{itemize}
        \item $|S \cap \Even|> 2^d/d^2$,
        \item $|S \cap \Odd|< 2^d / d^2$,
        \item $S \cap \Even$ is not well-separated.
    \end{itemize}
    The first case is bounded by the reasoning in \eqref{bound57}.
	For the second case, we need a slightly sharper argument than in \eqref{bound57} and need to use that the expected
	size of $S_1$ is $\Phi$ and the variance is at most $\sum_v \varphi_v = \Phi$ too (since $\Var(\Ber(p)) \leq p$). Thus,
	by Chebyshev's inequality, $|S_1| \leq 2\Phi$ with probability $1 - \frac{1}{\Phi}$.Then, on this event, $S_2$ is a
	uniform subset of a set of size at least $2^{d - 1} - 2d\Phi\ge \frac{2^{d}}{4}$ (the last inequality holding with
	high probability in the behavior of $\Phi$), so
    $|S_2| \geq 2^d / d^2$ with probability $1 - \exp{(-cd)}$.

    Finally, to rule out case 3, 
    observe that the expected number of {\it invalid pairs}, i.e., chosen vertices in $\cE$ which are 2-neighbors, is
    \[
        \sum_{u \sim_2 v} \varphi_u \varphi_v, \quad u, v \in \cH,
    \]
    so a first moment bound and \eqref{eq:nbrx} showing that this goes to zero in probability, we see that
    $\AS(\Nice^\c) \pto 0$.
At this point it will be convenient to let $\US^{\Even}$ denote the distribution of $\US$ \emph{conditioned} to be in
$\Nice^{\cE}$ and similarly define $\US^{\Odd}.$

We will now rely on the following straightforward bound which states that for any two probability measures $\mu$ and
$\nu$ on a common space, and a set $A$, such that $\mu$ and $\nu$ conditioned on $A$ are uniform,
\begin{align}\label{TVest675}
    \TV{\mu-\nu}\lesssim \mu(A^\c) +\nu(A^\c).
    \end{align}
Using this and setting $A$ to be $\Nice^{\cE}$ (and symmetrically for the odd side), it follows that 
\begin{align}\label{tv456}
\TV{\AS^{\cE}-\US^{\cE}}&\lesssim 1-\AS^{\cE}(\Nice^{\cE}).\\
\nonumber
\TV{\AS^{\cO}-\US^{\cO}}&\lesssim 1-\AS^{\cO}(\Nice^{\cO}).
\end{align}
Now notice that both $\US$ and $\AS$ admit the following representations as mixtures of distributions.
\begin{align}\label{mixture123}
\AS&= \frac{e^{\Phi_\cE}}{e^{\Phi_\cE}+e^{\Phi_\cO}}\AS^{\cE}+\frac{e^{\Phi_\cE}}{e^{\Phi_\cE}+e^{\Phi_\cO}}\AS^{\cO},\\
\US&=\frac{|\Nice^\cE|}{\Cnt}\US^{\cE}+\frac{|\Nice^\cO|}{\Cnt}\US^{\cO}\\
\nonumber
&+\frac{|\Nice^\c|}{\Cnt}\US(\cdot\mid \Nice^\c).
\end{align}
{
	By Lemma \ref{lem:iop_polymer1}, Lemma \ref{lem:bigsmallprim} and  Lemma \ref{lem:iop_birthday234}, \eqref{imp546}, the
	discussion above it, and \eqref{structureinput2},
we have $$\frac{|\Nice^\cE|}{2^{2^{d-1}}\zeta_p \exp{\Phi_{\cE}}}\pto 1,$$ and similarly for odd. Putting things
together, we have the following convergence:
}
\begin{align*}
\left|\frac{|\Nice^\cE|}{\Cnt}- \frac{e^{\Phi_\cE}}{e^{\Phi_\cE}+e^{\Phi_\cO}}\right|+ \left|\frac{|\Nice^\cO|}{\Cnt}-
\frac{e^{\Phi_\cO}}{e^{\Phi_\cE}+e^{\Phi_\cO}}\right|+\frac{|\Nice^\c|}{\Cnt}\pto 0.
\end{align*}
Note that this implies that the mixture distributions in \eqref{mixture123} are close to each other in total variation
distance.  This along with \eqref{tv456} and triangle inequality for total variation distance finishes the proof of the
first part of the lemma.

To prove the second part of the lemma, observe that it suffices to establish
    \begin{align}
        \label{eq:evensize}
        \TV{\cQ^\Even - \Pois(\mu_p)} \pto 0
    \end{align}
    where $\cQ^\Even$ is the law of $|S \cap \Even|$ under $S \sim \AS^\Even$, since a
    similar result holds for the $\Odd$ side, and then the claim in the statement follows by triangle inequality. To
    simplify the goal further, note that by \eqref{conv56789}, we have
    \[
        \AS^\Even(S \cap \Even \notin \Good^\Even) \pto 0.
    \]
	Thus,
    \[
        \TV{\AS^\Even(\cdot\ |\ S \cap \Even \in \Good^\Even) - \AS^\Even} \pto 0.
    \]
    Call the first measure in the above expression $\ASE$, and for $S \sim \ASE$, let $\cP$ denote the law of $|S \cap
    \Even|$. We will show that $\TV{\cP - \Pois(\Phi_\Even)} \pto 0$. Recalling the definition of $\Good_m^\Even$ from the beginning 
	of Section \ref{sec:relevant}, we have via a similar argument as \eqref{eq:unifapprox}
    \[
		\cP(m) = \fr{\sum_{S_1 \in \Good^\Even_m} \frac{\prod_{v \in S_1}2^{-\N(v)}}{\prod_v (1 + 2^{-N_p(v)})}}{\sum_{S_1 \in \Good^\Even} 
			\frac{\prod_{v \in S_1}2^{-N_p(v)}}{\prod_v (1 + 2^{-N_p(v)})}} 
				= 
				\fr{\sum_{S_1 \in \Good^\Even_m} \prod_{v \in S_1}2^{-\N(v)}}{\sum_{S_1 \in \Good^\Even} \prod_{v \in S_1}2^{-N_p(v)}} 
				= 
				\fr{\sum_{S_1 \in \Good^\Even_m} 2^{-N_p(S_1)}}{\sum_{S_1 \in \Good^\Even} 2^{-N_p(S_1)}}
    \]
    and so, abbreviating $\Phi_\Even$ to $\Phi$,
    \[
        \Abs{\cP(m) - \P(\Pois(\Phi) = m)} = \Abs{\fr{\sum_{S_1 \in \Good^\Even_m} 2^{-\N(S_1)}}{\sum_{S_1 \in
        \Good^\Even} 2^{-\N(S_1)}} - \fr{\ze_p\fr{\Phi^m}{m!}}{\ze_p \exp(\Phi)}}
    \]
    introducing the $\ze_p$ in accordance with Lemma \ref{lem:goodisnoncollision} to prepare for the next step. For
    positive numbers $a, b, c, d$, 
    \begin{align}
        \label{eq:abcd}
        \Abs{\fr{a}{b} - \fr{c}{d}} = \Abs{\fr{a}{d} \cdot \Rnd{1 + \Box{\fr{d}{b} - 1}} - \fr{c}{d}} \leq \fr{\Abs{a - c}}{d} +
        \fr{a}{d} \Abs{\fr{d}{b} - 1},
    \end{align}
    so we have
    \begin{align}
        \label{eq:decomptv}
        \sum_{m \geq 0} \Abs{\cP(m) - \P(\Pois(\Phi) = m)} &\leq 
            \fr{1}{\ze_p \exp(\Phi)} \sum_{m} \Abs{\sum_{S_1 \in \Good^\Even_m} 2^{-\N(S_1)} - \ze_p \fr{\Phi^m}{m!}} \nonumber
            \\
                                                            &\phantom{=====} + 
                                                           \fr{\sum_{S_1 \in \Good^\Even}
                                                           2^{-\N(S_1)}}{\ze_p \exp(\Phi)} \cdot \Abs{\fr{\ze_p \exp(\Phi)}{\sum_{S_1 \in
                                                       \Good^\Even} 2^{-\N(S_1)}} - 1}
    \end{align}
Recall from \eqref{ratio987},
    \[
        \fr{\sum_{S_1 \in \Good^\Even} 2^{-\N(S_1)}}{\ze_p \exp(\Phi)} \pto 1,
    \]
    showing that the second term in \eqref{eq:decomptv} goes to zero in probability. It remains to prove the same for the
    first term. However, this essentially follows from Lemma \ref{lem:relevantalpha}
    and \ref{lem:goodisnoncollision}. To see how, first observe that by Lemma \ref{useful245} it suffices to control
    \[
        \fr{1}{\exp(\mu_p)} \sum_{m} \Abs{\sum_{S_1 \in \Good^\Even_m} 2^{-\N(S_1)} - \ze_p \fr{\Phi^m}{m!}}.
    \]
    The terms with $m \notin \Win$ go to zero in probability by Lemma \ref{lem:relevantalpha}. 
    The remaining terms, i.e., the sum over $m \in \Win$, goes to zero in probability by
    Lemma \ref{lem:goodisnoncollision}, concluding the proof of the fact that 
    \[
        \TV{\cP - \Pois(\Phi)} \pto 0.
    \]
    To finish the proof, it now suffices to show that 
    \[
        \TV{\Pois(\Phi) - \Pois(\mu_p)} \pto 0.
    \]
However, this is straightforward considering $\E(\Phi)=\mu_p$ and $\Var(\Phi)=O(1)$ and since Poisson variables with
large means behave like Gaussians, so as long as the difference in the means is much smaller than their fluctuations,
one should expect the Poisson distributions to be close. We record a precise bound in the appendix (see Lemma
\ref{lem:poissontv}), which then reduces things to asserting that $\sqrt{\Phi} - \sqrt{\mu_p} \pto 0$. This,
    however follows from the CLT for $\Phi$, i.e., Lemma \ref{lem:jointclt}, since
    \[
        \Abs{\sqrt{\Phi} - \sqrt{\mu_p}} = \fr{\Abs{\Phi - \mu_p}}{\sig_p} \cdot \fr{\sig_p}{\sqrt{\Phi} +
        \sqrt{\mu_p}} \leq \fr{\Abs{\Phi - \mu_p}}{\sig_p} \cdot \fr{\sig_p}{\sqrt{\mu_p}} \pto 0.
    \]
    The first factor is bounded in probability, and the second goes to zero, deterministically, as
    \[
        (2 - 3p/2)^d \ll \sqrt{(2 - p)^d}
    \]
    when $p \geq \f 23$.
\end{proof}

We finish off this section with a quick proof of Theorem \ref{thm:geometry}.

\begin{proof}[Proof of Theorem \ref{thm:geometry}]
	The proof is now an immediate consequence of Proposition \ref{lem:sampling2} since as proven, $\AS(\abs{S_1}< \abs{S_2})\pto
	1$ and thus by Proposition \ref{lem:sampling2} the defect side in $\US$ and $S_1$ in $\AS$ can be coupled with high
	probability with the law of the latter being close to a Poisson implying the same for the former.
    
\end{proof}

\def\pc{\fr{(1 + \lam)^2}{2\lam(2 + \lam)}}
 
\section{Extensions to the hard-core model}\label{hardcore3456}
As indicated in the introduction, in particular in \eqref{hard12} and \eqref{hard13}, all our main results extend beyond
the case of the uniformly sampled independent sets and their counts to the hard-core model defined in
\eqref{hardcoredef342}. While the arguments are essentially the same which we will not repeat, in this short section, we
discuss the counterparts of all the key inputs appearing throughout the paper. The arguments then work as is with these
alternate inputs plugged in.  As already stated, for a given $\lambda>0$, the putative transition location $\pc$ is less than $1$ only when $\lambda > \sqrt{2}-1$ which is the only regime which will be considered here (we will not be repeating this condition on $\lambda$ further).

The key difference stems from the effect of the defects. Note that the contribution to the partition function $\Cntl$
from independent sets solely on, say, the odd side is $(1+\lambda)^{2^{d-1}}.$ A single defect, $v$ on the even side,
changes this by $\lambda(1+\lambda)^{-N_{p}(v)}$ which, being the counterpart of $2^{-N_p(v)}$ will now be denoted by
$\varphi_v=\varphi_{v,\lam}.$ This leads to the corresponding definitions of $\Phi_{\cE}=\Phi_{\cE, \lam}$ and
$\Phi_{\cO}=\Phi_{\cO,\lam}.$

In analogy with the $\lam = 1$ case, the proofs proceed via  examining an approximation of the type 
\[
        \cZ_\lam = \cZ_{d, p, \lam} \approx (1 + \lam)^{2^{d - 1}} \Rnd{\exp(\Phi_{\Even, \lam}) + \exp(\Phi_{\Odd,\lam})},
\]
and potentially with a multiplicative correction
at the critical $p$, on account of the ``collisions'', as suggested by Section \ref{sec:birthday_full}.
Observe that for any $S \sse \Even$, the overall contribution to $\cZ_\lam$ of independent sets $I$ with $I \cap \Even = S$ is
\[
    \sum_{B \sse \Odd \setminus N_p(S)} \lam^{|S| + |B|} = \lam^{|S|} (1 + \lam)^{2^{d - 1} - N_p(S)}.
\]
Thus, if $S$ is well-separated, this contribution is simply
\[
    (1 + \lam)^{2^{d - 1}} \prod_{v \in S} \lam(1 + \lam)^{-N_p(v)}.
\]

Let us next present a few instructive moment computations mimicking Lemmas \ref{lem:xvmoments} and
\ref{lem:exvarcov}.

\begin{lemma}[Moments]
    \label{lem:momentslam}
    We have the following moments:
    \begin{align*}
        \E[\varphi_v^k] &= \lam^k\Rnd{1 - p + \fr{p}{(1 + \lam)^k}}^d, \quad k \geq 1, \\
        \Var(\varphi_v) &= \lam^2 \Rnd{1 - p + \fr{p}{(1 + \lam)^2}}^d \cdot (1 - o(1)), \\
        \E[\Phi_\Even]  &= \fr{\lam}{2}\Rnd{2\Box{1 - \fr{\lam p}{1 + \lam}}}^d, \\
        \Var(\Phi_\Even) &= \fr{\lam^2}{2} \Rnd{2\Box{1 - p + \fr{p}{(1 + \lam)^2}}}^d \cdot (1 - o(1)), 
    \end{align*}
    with similar results for $\Phi_\Odd$.
\end{lemma}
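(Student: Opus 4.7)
The plan is to mirror Lemmas \ref{lem:xvmoments} and \ref{lem:exvarcov} with $\varphi_v = \lam(1+\lam)^{-N_p(v)}$ replacing $2^{-N_p(v)}$; every step is a routine binomial moment computation together with a single Jensen-type comparison for the variance.

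For the $k$-th moment of a single $\varphi_v$, I would use that $N_p(v) \sim \Bin(d,p)$ is a sum over the $d$ edges incident to $v$ of independent $\Ber(p)$ random variables, so that
\[
    \E[\varphi_v^k] \;=\; \lam^k \Rnd{(1-p) + p(1+\lam)^{-k}}^d \;=\; \lam^k \Rnd{1 - p + \fr{p}{(1+\lam)^k}}^d,
\]
exactly as in the $\lam=1$ case. Summing the $k=1$ version over the $2^{d-1}$ vertices $v \in \Even$ and using the identity $1 - p + \fr{p}{1+\lam} = 1 - \fr{\lam p}{1+\lam}$ produces the stated expression for $\E[\Phi_\Even]$.

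For the variances, the key structural input is that any two distinct even vertices have disjoint sets of incident edges in $\Q$, so $\{\varphi_v\}_{v \in \Even}$ are mutually independent (and likewise on the odd side). Hence $\Var(\Phi_\Even) = 2^{d-1}\Var(\varphi_v)$, and plugging in the single-vertex first and second moments reduces the claim to the exponential domination
\[
    \Rnd{1 - \fr{\lam p}{1+\lam}}^2 \;<\; 1 - p + \fr{p}{(1+\lam)^2}.
\]
This is precisely Jensen's inequality $\E[X]^2 < \E[X^2]$ applied to $X \in \{1,\,1/(1+\lam)\}$ with weights $1-p,\,p$, which is strict for every $\lam > 0$ and $p \in (0,1)$; strictness is what extracts the $(1-o(1))$ factor in the displayed formula for $\Var(\Phi_\Even)$.

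There is no real obstacle to overcome — the lemma is purely a setup step — but it is worth flagging that the same independence of $\{\varphi_v\}_{v \in \Even}$ exploited here is exactly what lets the Stein-type argument of Section \ref{sec:jointclt} carry over to the $\lam$-setting with no modification beyond the new moments above. In particular, reading the formulas off the lemma, the base $2\Box{1 - p + p/(1+\lam)^2}$ governing $\sig_{p,\lam}^2$ equals $1$ precisely at $p = \fr{(1+\lam)^2}{2\lam(2+\lam)}$; thus $\sig_{p,\lam}$ is of constant order at that value of $p$ and decays exponentially above it, exactly mirroring the role of $p = 2/3$ in the $\lam=1$ case and producing the transition point asserted in \eqref{hard12}--\eqref{hard13}.
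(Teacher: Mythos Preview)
Your proof is correct and follows essentially the same route as the paper: compute $\E[\varphi_v^k]$ via the binomial/Bernoulli factorization, deduce $\E[\Phi_\Even]$ and $\Var(\Phi_\Even)$ from independence of $\{\varphi_v\}_{v\in\Even}$, and use the strict inequality $\Rnd{1-p+\fr{p}{1+\lam}}^2 < 1-p+\fr{p}{(1+\lam)^2}$ to extract the $(1-o(1))$. Your Jensen justification of that inequality is a nice touch the paper omits, but otherwise the arguments coincide.
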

\begin{proof}
    The proof is straightforward. For each $k \geq 1$ we have
    \[
        \E[\varphi_v^k] = \lam^k\Ex{(1 + \lam)^{-k \cdot \Bin(d, p)}} = \lam^k \Rnd{\Ex{(1 + \lam)^{-k\cdot\Ber(p)}}}^d = \lam^k
        \Rnd{1 - p + \fr{p}{(1 + \lam)^k}}^d.
    \]
    Thus,
    \begin{align*}
        \Var(\varphi_v) = \E[\varphi_v^2] - \E[\varphi_v]^2 &= \lam^2 \Box{\Rnd{1 - p + \fr{p}{(1 + \lam)^2}}^d - \Rnd{1 - p + \fr{p}{1 +
        \lam}}^{2d}}  \\
                                          &= \lam^2 \Rnd{1 - p + \fr{p}{(1 + \lam)^2}}^d \cdot (1 - o(1))
    \end{align*}
    since
    \[
        \Rnd{1 - p + \fr{p}{1 + \lam}}^2 < 1 - p + \fr{p}{(1 + \lam)^2}, \quad p \in (0, 1), \lam > 0.
    \]
    The remaining estimates follow from the fact that $\Phi$ is a sum of $2^{d - 1}$ i.i.d. copies of $\varphi_v$.
\end{proof}

In particular, note that since 
\[
    2\Box{1 - p + \fr{p}{(1 + \lam)^2}} < 1 \iff p > \pc, 
\]
the variance of $\Phi$ is exponentially small when $p > \pc$ and constant at $p = \pc$, with the constant being
$\lam^2/2$. It might be reassuring to check that  indeed $\pc = 2/3$ when $\lam = 1$.\\

In the rest of the section we discuss the modifications needed to deliver the key inputs which serve as the main
ingredients in the proof of Theorem \ref{thm:main}.

The first is the CLT result Lemma \ref{lem:jointclt} and its straightforward consequences recorded in Lemma
\ref{lem:iop_jointclt}. The next is Lemma \ref{lem:relevantalpha} which shows that the terms that contribute most to
$\exp(\Phi)$, i.e., where $\Phi$ is either $\Phi_\Even$ or $\Phi_\Odd$, are those with index in a $\Win \df [\mu_p -
\mu_p^{0.6}, \mu_p + \mu_p^{0.6}]$ (the choice of $0.6$ is, again, arbitrary, and anything slightly larger than $0.5$
suffices). 
The final input is Lemma \ref{lem:poisson} which sharply estimates the collision probability by proving closeness of the
number to a Poisson variable.

In the general $\lambda$ case, 
the joint CLT result for $(\Phi_\Even, \Phi_\Odd)$ and its consequence as in Lemma \ref{lem:jointclt} hold verbatim,
once appropriate replacements of the scaling and centering constants have been made. To this end recall the constants
from \eqref{modconstant123},
\begin{align*}
    \mu_{p, \lam} &= \fr{\lam}{2} \Rnd{2 - \fr{2\lam p}{1 + \lam}}^d = \E \Phi_\Even\\
    \sig_{p, \lam}^2 &= \fr{\lam^2}{2}\Rnd{2\Box{1 - p + \fr{p}{(1 + \lam)^2}}}^d = \Var(\Phi_\Even), \quad \text{up to
    $1 - o(1)$ correction}.
\end{align*}
{\it For the remainder of this section, we will abbreviate $\mu_{d, \lam}$ to $\mu_p$ and $\sig_{d, \lam}$ to $\sig_p$,
and keep the $\lam$-dependence implicit.}

\newcommand{\pl}{{p, \lam}}

\begin{lemma}
    \label{lem:jointcltlam}
    In the notation above, the following joint CLT holds for $\Phi_\Even$ and $\Phi_\Odd$ for all $p$ and $\lam$ fixed:
\begin{align*}
    \Rnd{
		\fr{\Phi_\Even - \mu_{p, \lam}}{\sigma_{p, \lam}}, \fr{\Phi_\Odd - \mu_{p, \lam}}{\sigma_{p, \lam}}
    } \weakto \Nor{0}{1} \otimes \Nor{0}{1}.
\end{align*} 
    Consequently, for $p > \pc$ defined above, we have 
    \[
		\fr{\exp(\Phi_\Even) + \exp(\Phi_\Odd) - 2\exp(\mu_\pl)}{\sqrt{2}\sig_\pl \exp(\mu_\pl)} \weakto \Nor{0}{1},
    \]
    and at $p = \pc$, 
    \[
        \fr{\exp(\Phi_\Even) + \exp(\Phi_\Odd)}{\exp(\mu_\pl)} \weakto \Exp{\fr{\lam W_1}{\sqrt{2}}} + \Exp{\fr{\lam W_2}{\sqrt{2}}}
    \]
    where $W_1, W_2 \sim \Nor{0}{1}$, i.i.d.
\end{lemma}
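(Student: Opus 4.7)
The plan is to mirror the $\lam=1$ proofs of Lemmas~\ref{lem:jointclt} and \ref{lem:iop_jointclt} essentially verbatim, with the $\lam$-dependent moment estimates from Lemma~\ref{lem:momentslam} substituted in. First I would establish the joint CLT via the Cramér--Wold device: fixing $c_1, c_2 \in \R$ with $c_1^2 + c_2^2 = 1$, I set
\[
    \widetilde{\Phi} = c_1(\Phi_\Even - \mu_\pl) + c_2(\Phi_\Odd - \mu_\pl) = \sum_{v \in \Even} c_1(\varphi_v - \E \varphi_v) + \sum_{v \in \Odd} c_2(\varphi_v - \E \varphi_v)
\]
and it suffices to show $\widetilde{\Phi}/\sig_\pl \weakto \Nor{0}{1}$. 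The only nontrivial piece of the variance computation is $\Cov(\Phi_\Even, \Phi_\Odd)$, which receives contributions only from pairs $v \in \Even, w \in \Odd$ with $v \sim w$ (within a single side the $\varphi_v$ depend on disjoint edge sets in $\Qp$, hence are independent). A short calculation using Lemma~\ref{lem:momentslam} combined with the strict Jensen inequality $[(1-p) + p/(1+\lam)]^2 < (1-p) + p/(1+\lam)^2$ for the Bernoulli variable taking values $1$ and $1/(1+\lam)$ shows this covariance is exponentially smaller than $\sig_\pl^2$, giving $\Var(\widetilde{\Phi}) = \sig_\pl^2(1 + o(1))$.

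To prove asymptotic normality of $\widetilde{\Phi}/\sig_\pl$ I would then apply Theorem~\ref{thm:dependentstein}. The dependency graph on $\Q$ has degree $d$, so $D = d$. By the same Jensen-type reduction used in the $\lam=1$ proof, $\E|\varphi_v - \E\varphi_v|^3 \lesssim \E\varphi_v^3 = \lam^3(1 - p + p/(1+\lam)^3)^d$ and $\E(\varphi_v - \E\varphi_v)^4 \lesssim \lam^4(1 - p + p/(1+\lam)^4)^d$. Inserting these along with $\sig_\pl^2 = \frac{\lam^2}{2}(2[1 - p + p/(1+\lam)^2])^d$ into \eqref{eq:wassersteinbound} produces a bound that, up to polynomial factors in $d$, is controlled by powers of
\[
    \frac{[1 - p + p/(1+\lam)^3]^2}{2[1 - p + p/(1+\lam)^2]^3}
    \quad\text{and}\quad
    \frac{1 - p + p/(1+\lam)^4}{2[1 - p + p/(1+\lam)^2]^2},
\]
and the task reduces to verifying that these ratios stay bounded strictly below $1$ in the relevant parameter regime.

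For the consequences about $\exp(\Phi_\Even) + \exp(\Phi_\Odd)$, the argument follows the proof of Lemma~\ref{lem:iop_jointclt} with $\mu_p, \sig_p$ replaced by $\mu_\pl, \sig_\pl$. Writing $\Phi_\Even = \mu_\pl + \sig_\pl W_\Even$ and $\Phi_\Odd = \mu_\pl + \sig_\pl W_\Odd$: for $p > \pc$, $\sig_\pl$ is exponentially small, so Taylor's theorem yields $\exp(\sig_\pl W) - 1 - \sig_\pl W = O(\sig_\pl^2 W^2 \exp(\sig_\pl |W|))$, and since $W_\Even, W_\Odd$ are tight this error vanishes in probability; dividing by $\sqrt 2\, \sig_\pl \exp(\mu_\pl)$ yields the normal limit. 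At $p = \pc$, where $\sig_\pl = \lam/\sqrt 2$ exactly, no expansion is needed: the continuous mapping theorem applied to $\exp(\lam W_\Even/\sqrt 2) + \exp(\lam W_\Odd/\sqrt 2)$ delivers the sum-of-log-normals limit directly.

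The main obstacle is the tedious but routine verification that the two Stein-error ratios displayed above are bounded away from $1$ in the relevant parameter regime; everything else proceeds exactly as in the corresponding $\lam = 1$ arguments already laid out in Sections~\ref{sec:jointclt} and \ref{sec:proof_of_iop_jointclt}.
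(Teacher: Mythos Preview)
Your proposal is correct and follows essentially the same approach as the paper: the paper's own proof simply says that the argument of Lemma~\ref{lem:jointclt} applies verbatim for the joint CLT, and that the only change in the consequence from Lemma~\ref{lem:iop_jointclt} is that $\sig_\pl^2 = \lam^2/2$ at the critical point, replacing $\exp(W_i/\sqrt{2})$ by $\exp(\lam W_i/\sqrt{2})$. Your write-up spells out these steps in more detail (Cram\'er--Wold, covariance bound, Stein error ratios, Taylor expansion versus continuous mapping), but the route is identical.
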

\begin{proof}
    For the joint CLT, the same proof as that of  Lemma \ref{lem:jointclt} applies. For the
    distributional limits of exponentials, the only modification from Lemma \ref{lem:iop_jointclt} is that
	$\exp(W_1/\sqrt{2})$ is replaced by $\exp(\lam W_1 / \sqrt{2})$ (and similarly for $W_2$). This is due to the fact
	that at $\pc$, $\sig_\pl^2$ 
	is now $\fr{\lam^2}{2}$ and not simply ${\f 12}$.
\end{proof}

The same argument as in  Lemma \ref{lem:relevantalpha} implies that when $p \geq \pc$, the terms that contribute most to
$\exp(\Phi)$, are those with index in $\Win$ and hence this does not warrant any further commentary.

We end this section with a discussion on the counterpart of Lemma \ref{lem:poisson}. This is the
content of the next lemma. We will use the same notations  $\Col_n, \Rep_n$ as well as $N_\Col, N_\Rep, N_\Nbr$ as discussed 
right before Lemma \ref{lem:poisson}, but now for
samples $\xi_1, \ldots, \xi_n \sim \pi.$ Here $\pi(v)$ is  the random measure with weights proportional to $\varphi_{v}
$ depending on $\lambda,$ as defined above. Given this, the following statement replaces Lemma \ref{lem:poisson} where,
as before, the law of $N_\Col$ is referred to as
$\cL_{n, d}$ (with now an implicit dependence on $\lam$ as well).

\begin{lemma}
    \label{lem:poissonlam}
    The law $\cL_{n, d}$ satisfies that for $p = \pc$, 
    \[
		\TV{\cL_{n, d} - \Pois\Fr{\lam^2}{4}} \pto 0
    \]
    and for $p > \pc$,
    \[
        \TV{\cL_{n, d} - \del_0} \pto 0,
    \]
    as $d \to \infty$ as long as $n = \mu_\pl \cdot (1 \pm o(1))$.

     As a consequence, for $p = \pc$, 
    \[
		\pi(\Col_n) \pto 1 - e^{-\fr{\lam^2}{4}},
    \]
    and when $p > \pc$,
    \[
        \pi(\Col_n) \pto 0.
    \]
	In the latter case, as in Lemma \ref{lem:poisson}, an explicit bound is also given by
	$\frac{d^2n^2}{\Phi^2}\sum_{v\in \cE}\varphi_v^2.$
\end{lemma}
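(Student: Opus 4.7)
The plan is to mirror the proof of Lemma \ref{lem:poisson} essentially verbatim, with each moment of $\varphi_v = 2^{-N_p(v)}$ replaced by the analogous $\lambda$-dependent moment from Lemma \ref{lem:momentslam}. As before, we write $N_\Col = N_\Rep + N_\Nbr$ and analyze each piece separately: $N_\Nbr$ via a first-moment bound, and $N_\Rep$ via the Stein--Chen quantitative Poisson approximation (Lemma \ref{lem:stein}) applied to the indicators $\chi_{ij} \df \1{\xi_i = \xi_j}$.

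First I would treat $N_\Rep$ at $p = \pc$. The Stein--Chen parameter is $\t = \binom{n}{2} \sum_v \varphi_v^2 / \Phi^2$, and the defining identity for $\pc$ is precisely
\[
1 - p + \frac{p}{(1 + \lam)^2} \,=\, 1 - \frac{p\lam(2 + \lam)}{(1 + \lam)^2} \,=\, \frac{1}{2} \qquad \text{at } p = \pc,
\]
so $\E[\sum_v \varphi_v^2] = 2^{d-1} \lam^2 \cdot 2^{-d} = \lam^2/2$, with variance $\le 2^{d-1} \E[\varphi_v^4]$ that is exponentially smaller than $(\lam^2/2)^2$. Combined with Chebyshev concentration $\Phi/\mu_\pl \pto 1$ and the hypothesis $n = \mu_\pl(1 \pm o(1))$, this yields $\t \pto \lam^2/4$, which is the $\lam$-counterpart of the $1/4$ from Lemma \ref{lem:poisson}. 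The two Stein--Chen error terms, $\mathrm{I} \lesssim n^3 \sum_v \varphi_v^3 / \Phi^3$ and $\mathrm{II} \lesssim n^3 (\sum_v \varphi_v^2 / \Phi^2)^2$, are then handled by Markov after observing that $\E[\sum_v \varphi_v^k] = 2^{d-1} \lam^k (1 - p + p/(1 + \lam)^k)^d$ is exponentially small at $\pc$ for every $k \ge 3$, since $1 - p + p/(1+\lam)^k$ is strictly decreasing in $k$. Invoking Lemma \ref{lem:poissontv} to pass from $\Pois(\t)$ to $\Pois(\lam^2/4)$ closes out $N_\Rep$.

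Next, the bound on $N_\Nbr$ is $\E_\pi N_\Nbr \lesssim n^2 \sum_{u \sim_2 v} \varphi_u \varphi_v / \Phi^2$. When $u \sim_2 v$ share a single common neighbor (the overwhelming majority of such pairs), the $2d$ edges incident to $\{u, v\}$ split into one shared edge and $2d - 2$ independent ones, giving
\[
\E[\varphi_u \varphi_v] \,=\, \lam^2 \Bigl(1 - p + \tfrac{p}{(1+\lam)^2}\Bigr)\Bigl(1 - p + \tfrac{p}{1+\lam}\Bigr)^{2d-2}.
\]
Since there are $O(d^2 \cdot 2^{d-1})$ such pairs and $\mu_\pl = (\lam/2)(2 - 2\lam p/(1+\lam))^d$, and since $1 - p + p/(1+\lam) = 1 - \lam p/(1+\lam)$, the ratio of the expected numerator to $\mu_\pl^2$ is $O(d^2 / 2^d)$. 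Markov plus $n/\Phi \pto 1$ yields $N_\Nbr \pto 0$, and triangle inequality then gives $\TV{N_\Col - \Pois(\lam^2/4)} \pto 0$. For $p > \pc$, the same bound $\E_\pi N_\Col \lesssim d^2 (n/\Phi)^2 \sum_v \varphi_v^2$ immediately gives $N_\Col \pto 0$, since $\E[\sum_v \varphi_v^2] = (\lam^2/2)(2[1 - p + p/(1+\lam)^2])^d$ is exponentially small past $\pc$. The stated consequences for $\pi(\Col_n)$ follow at once by evaluating the limiting laws on $\{N_\Col \ge 1\}$, and the explicit bound $\pi(\Col_n) \le d^2 n^2 \sum_v \varphi_v^2 / \Phi^2$ for $p > \pc$ is simply the above first-moment estimate.

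The main obstacle, as in Lemma \ref{lem:poisson}, is bookkeeping: verifying the exponential decay of several $\lam$-indexed quantities of the form $(2[1 - p + p/(1+\lam)^k])^d$ at and past $p = \pc$. All of these inequalities reduce, after elementary algebra, to the strict monotonicity of $k \mapsto 1 - p + p/(1+\lam)^k$ in $k$ combined with the defining relation $2[1 - p + p/(1+\lam)^2] = 1$ at $p = \pc$. This renders the required decay verifications routine rather than substantive, and allows the argument to proceed identically to the $\lam = 1$ case.
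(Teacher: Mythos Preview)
Your proposal is correct and follows exactly the paper's approach: the paper's own proof simply states that the argument of Lemma \ref{lem:poisson} goes through verbatim and then verifies only the computation $\t_d \pto \lam^2/4$, which you reproduce. One small slip: for $u \sim_2 v$ with $u,v \in \Even$, there is no shared edge (the edges $(u,w)$ and $(v,w)$ to a common neighbor $w$ are distinct), so $\varphi_u$ and $\varphi_v$ are in fact independent and $\E[\varphi_u\varphi_v] = \lam^2(1 - p + p/(1+\lam))^{2d}$; your stated formula is a slight overcount, but since it only enters as an upper bound and still yields the $O(d^2/2^d)$ ratio, the argument is unaffected.
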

\begin{proof}
    The proof of Lemma \ref{lem:poisson} essentially goes through verbatim. The only computation we wish to
	demonstrate is the origin of the parameter $\fr{\lam^2}{4}$ for $p = \pc$.
    For this, we recompute equation \eqref{eq:theta1}, i.e., we
    determine the (in probability) limit of 
    \[
        \t_d \df \binom{n}{2}\fr{\sum_v \varphi_v^2}{\Rnd{\sum_v \varphi_v}^2} \quad \text{(see \eqref{eq:thetadef})}
    \]
    where $n/\mu_\pl \to 1$. Via the CLT for $\Phi$, $(\sum_v \varphi_v)^{-2} \binom{n}{2} \pto \fr{1}{2}$. Also,
    \[
        \Ex{\sum_v \varphi_v^2} = \sig_\pl^2 = \fr{\lam^2}{2}, \quad \text{at $p = \pc$.}
    \]
    This is also the limit in probability of $\sum_v \varphi_v^2$ since
    \[
        \Var\Rnd{\sum_v \varphi_v^2} \leq \sum_v \E[\varphi_v^4] \lesssim \lam^4\Rnd{2\Box{1 - p + \fr{p}{(1 + \lam)^4}}}^d =
        \lam^4 \Fr{\lam^2 + 2\lam}{\lam^2 + 2 \lam + 1}^d \ll 1,
    \]
    when $p = \pc$, as may be verified by a direct calculation. Putting everything together,
 $\t_d \pto \fr{\lam^2}{4}$.
\end{proof}

We end this article by recording the elementary proofs of some of the lemmas featuring in the main body.

\section{Appendix}\label{appendix234}
\begin{lemma}
\label{lem:poissontails}
Let $X \sim \mathrm{Poisson}(\lambda)$.
Then we have
\begin{itemize}
    \item (Upper Tail Bound). For any $t > 0$,
    \begin{align*}
        \P[X > \lambda + t \sqrt{\lambda}]
        \leq \exp\Rnd{- \fr{t^2}{2} + \fr{t^3}{2 \sqrt{\lambda}}}.
    \end{align*}
    \item (Lower Tail Bound). For any $\eta > 0$ there is some constant $c_\eta$ such that 
    if $t \leq c_\eta \sqrt{\lambda}$, we have
    \begin{align*}
        \P[X < \lambda - t \sqrt{\lambda}]
        \leq \exp\Rnd{- \Rnd{\fr{1}{2} - \eta} t^2
        - \Rnd{\fr{1}{2} + \eta} \fr{t^3}{\sqrt{\lambda}}}.
    \end{align*}
    \item (Two-Sided Bound). In particular, if
    $t \leq \min\left\{c_{\fr{1}{6}}, \fr{1}{3}\right\} \cdot \sqrt{\lambda}$,
    we have
    \begin{align*}
        \P[|X - \lambda| > t \sqrt{\lambda}] \leq 2 \exp\Rnd{- \fr{t^2}{3}}.
    \end{align*}
\end{itemize}
\end{lemma}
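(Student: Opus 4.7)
The plan is to derive all three estimates from a single Chernoff/Cram\'er bound, and then to obtain the explicit polynomial exponents by Taylor-expanding the rate function $h(v) \coloneq (1+v)\log(1+v) - v$ around $v = 0$. Since $X \sim \Pois(\lam)$ has moment generating function $\E[e^{sX}] = \exp(\lam(e^s - 1))$, the standard exponential Markov argument, optimized at $s = \log(1+u)$ for the upper tail and at $s = \log(1-u)$ for the lower tail with $u = t/\sqrt{\lam}$, yields the classical Bennett bounds
\begin{align*}
    \P\Box{X > \lam + t\sqrt{\lam}} \leq \exp(-\lam h(u)), \qquad
    \P\Box{X < \lam - t\sqrt{\lam}} \leq \exp(-\lam h(-u)),
\end{align*}
with the second inequality valid for $u \in (0, 1)$. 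This Chernoff step is the only probabilistic input; everything after is a deterministic comparison of scalar functions.

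For the upper tail, I would verify the elementary inequality $h(u) \geq u^2/2 - u^3/2$ for every $u \geq 0$. Setting $f(u) \coloneq h(u) - u^2/2 + u^3/2$, one checks $f(0) = f'(0) = 0$ together with
\begin{align*}
    f''(u) = \fr{1}{1+u} - 1 + 3u = \fr{u(2 + 3u)}{1+u} \geq 0,
\end{align*}
so $f \geq 0$ on $[0, \infty)$. Multiplying through by $\lam$ and substituting $u = t/\sqrt{\lam}$ then gives exactly the claimed upper-tail estimate.

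For the lower tail, I would expand $h(-u)$ directly. Using $\fr{d^k}{du^k} h(-u)\big|_{u=0} = (k-2)!$ for $k \geq 2$, one obtains the power series with only nonnegative coefficients
\begin{align*}
    h(-u) = \sum_{k \geq 2} \fr{u^k}{k(k-1)} = \fr{u^2}{2} + \fr{u^3}{6} + \fr{u^4}{12} + \cdots.
\end{align*}
Dropping the (helpful, nonnegative) tail and comparing coefficients,
\begin{align*}
    h(-u) - \Rnd{\fr{1}{2} - \eta} u^2 - \Rnd{\fr{1}{2} + \eta} u^3 \geq \eta u^2 - \Rnd{\fr{1}{3} + \eta} u^3 = u^2 \Box{\eta - \Rnd{\fr{1}{3} + \eta} u},
\end{align*}
which is nonnegative whenever $u \leq c_\eta \coloneq \eta/\Rnd{\fr{1}{3} + \eta}$. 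Rescaling by $\lam$ produces the stated lower-tail bound.

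The two-sided estimate is then immediate by specializing $\eta = \fr{1}{6}$, so $c_{1/6} = \fr{1}{3}$, and restricting to $t \leq \fr{\sqrt{\lam}}{3}$. Under this constraint, the upper-tail exponent satisfies $-\fr{t^2}{2} + \fr{t^3}{2\sqrt{\lam}} \leq -\fr{t^2}{2} + \fr{t^2}{6} = -\fr{t^2}{3}$, while the lower-tail exponent is a sum of two nonpositive terms whose quadratic piece is already $-\fr{t^2}{3}$; a union bound over the two sides yields the claimed $2\exp(-t^2/3)$. Overall the calculations are routine; the only place requiring any care is the identification of the constant $c_\eta$, which is precisely what is needed so that the quadratic gain $\eta u^2$ absorbs the cubic correction $(\fr{1}{3} + \eta) u^3$ without needing to invoke the higher-order positive coefficients in the series for $h(-u)$.
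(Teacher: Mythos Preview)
Your proof is correct and follows essentially the same route as the paper: both optimize the Chernoff bound to reach the Bennett rate $\exp(-\lambda h(\pm u))$ with $u=t/\sqrt\lambda$, and then compare $h(\pm u)$ to a cubic. The only cosmetic difference is that the paper establishes the cubic comparison via the inequalities $\log(1+x)\ge x-\tfrac{x^2}{2}$ and $\log(1-x)\ge -x-(\tfrac12+\eta)x^2$, whereas you work directly with $h$ through a second-derivative check and the positive power series for $h(-u)$; this yields a slightly larger admissible constant $c_\eta=\tfrac{3\eta}{1+3\eta}$ (in particular $c_{1/6}=\tfrac13$) than the paper's $c_\eta=\tfrac{4\eta}{1+6\eta+8\eta^2}$, but the argument is otherwise the same.
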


\begin{proof}[Proof of Lemma \ref{lem:poissontails}]
Let $T_\pm = \lambda \pm t \sqrt{\lambda}$.
We will apply Chernoff's method.
First, for any $a > 0$,
\begin{align*}
    \P[X > T_+] \leq \fr{\exp(\lambda (e^a - 1))}{\exp(T_+ a)}.
\end{align*}
Let us choose $a$ such that $e^a = \fr{T_+}{\lambda}$ (such an $a$ is positive
since $T_+ > \lambda$).
This gives
\begin{align*}
    \P[X > T_+] &\leq \fr{\exp(T_+ - \lambda)}{\Fr{T_+}{\lambda}^{T_+}} = \fr{\exp(t \sqrt{\lambda})}
        {\Rnd{1 + \fr{t}{\sqrt{\lambda}}}^{\lambda + t \sqrt{\lambda}}} \\
    &\leq \exp \Rnd{ t \sqrt{\lambda} - 
        \Rnd{\fr{t}{\sqrt{\lambda}} - \fr{t^2}{2\lambda}}
        \Rnd{\lambda + t \sqrt{\lambda}} } = \exp \Rnd{- \fr{t^2}{2} + \fr{t^3}{2 \sqrt{\lambda}}},
\end{align*}
where in the second inequality we used the fact that
$1 + x \geq \exp\Rnd{x - \fr{x^2}{2}}$, which holds for all $x > 0$.
Similarly, for any $a > 0$ we have
\begin{align*}
    \P[X < T_-] = \P[- X > - T_-] \leq \fr{\exp(\lambda (e^{-a} - 1))}{\exp(- T_- a)}.
\end{align*}
Choosing $a$ such that $e^{-a} = \fr{T_-}{\lambda}$, we obtain
\begin{align*}
    \P[X < T_-] &\leq \fr{\exp(T_- - \lambda)}{\Fr{T_-}{\lambda}^{T_-}} = \fr{\exp(- t \sqrt{\lambda})}
        {\Rnd{1 - \fr{t}{\sqrt{\lambda}}}^{\lambda - t \sqrt{\lambda}}} \\
    &\leq \exp \Rnd{ - t \sqrt{\lambda} - 
        \Rnd{- \fr{t}{\sqrt{\lambda}} - \Rnd{\fr{1}{2} + \eta} \fr{t^2}{\lambda}}
        \Rnd{\lambda - t \sqrt{\lambda}}} \\
    &= \exp\Rnd{
        - \Rnd{\fr{1}{2} - \eta} t^2 - \Rnd{\fr{1}{2} + \eta} \fr{t^3}{\sqrt{\lambda}}
    }
\end{align*}
where in the second inequality we used the fact that
$1 - x \geq \exp\Rnd{- x - (\fr{1}{2}+\eta) x^2}$, which holds for all positive 
$x \leq \fr{4 \eta}{1 + 6 \eta + 8 \eta^2}$, which we will take to be $c_\eta$.
The two-sided bound follows from a combination of the two previous bounds, with 
a union bound.
\end{proof}

\begin{lemma}
    \label{lem:poissontv}
    The total-variation distance between $\Pois(\lam)$ and $\Pois(\mu)$ is, up to universal constants,
    at most $\Abs{\sqrt{\lam} - \sqrt{\mu}}$.
\end{lemma}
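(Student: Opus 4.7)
The plan is to reduce to the Hellinger distance, for which the Poisson family admits a clean closed form, and then invoke the standard comparison between total variation and Hellinger. Recall the Hellinger distance between discrete measures $P,Q$ is $H^2(P,Q) = \tfrac{1}{2}\sum_k (\sqrt{p_k}-\sqrt{q_k})^2$, and that $\TV{P-Q} \le \sqrt{2}\,H(P,Q)$ is a universal bound.

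First I would expand the Hellinger distance between $\Pois(\lam)$ and $\Pois(\mu)$ using the product form of Poisson point masses:
\[
H^2(\Pois(\lam),\Pois(\mu)) = 1 - e^{-(\lam+\mu)/2}\sum_{k\ge 0}\fr{(\sqrt{\lam\mu})^k}{k!} = 1 - \Exp{-\tfrac{1}{2}(\sqrt{\lam}-\sqrt{\mu})^2},
\]
which is the key computation and follows because the cross-term in $\tfrac12\sum(\sqrt{p_k}-\sqrt{q_k})^2$ is precisely the moment generating series $\sum_k (\sqrt{\lam\mu})^k/k! = e^{\sqrt{\lam\mu}}$, and $-\tfrac{\lam+\mu}{2}+\sqrt{\lam\mu} = -\tfrac12(\sqrt{\lam}-\sqrt{\mu})^2$.

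Second, I would apply the elementary inequality $1-e^{-x}\le x$ with $x = \tfrac12(\sqrt{\lam}-\sqrt{\mu})^2$ to obtain $H^2 \le \tfrac{1}{2}(\sqrt{\lam}-\sqrt{\mu})^2$, and then conclude via the standard bound $\TV{\Pois(\lam)-\Pois(\mu)} \le \sqrt{2}\,H(\Pois(\lam),\Pois(\mu)) \le \abs{\sqrt{\lam}-\sqrt{\mu}}$, which gives the desired estimate with universal constant $1$.

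There is no real obstacle here; the only step that requires any care is the Hellinger computation itself, where one has to recognize the exponential generating function of $\sqrt{\lam\mu}$ after pulling $e^{-(\lam+\mu)/2}$ out. Everything else is a one-line inequality, so the argument is completely routine once the Hellinger formula is in hand.
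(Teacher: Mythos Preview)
Your argument is correct. The Hellinger identity $H^2(\Pois(\lam),\Pois(\mu)) = 1 - \exp\bigl(-\tfrac{1}{2}(\sqrt{\lam}-\sqrt{\mu})^2\bigr)$ is standard and your derivation of it is fine; combined with $1-e^{-x}\le x$ and the Le Cam-type inequality $\TV{P-Q}\le \sqrt{2}\,H(P,Q)$ you indeed get the bound with universal constant $1$.

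The paper, by contrast, does not prove the lemma at all: it simply cites \cite{roos}*{Theorem~1(b)}. So your route is genuinely different in that it is self-contained and elementary, whereas the paper defers to the literature. The advantage of your approach is that a reader need not chase the reference and gets an explicit constant; the advantage of citing Roos is brevity and, if one cares, a sharper constant (Roos obtains the asymptotically correct prefactor). For the purposes of the present paper either is entirely adequate, since the lemma is only used qualitatively to conclude $\TV{\Pois(\Phi)-\Pois(\mu_p)}\pto 0$ from $\sqrt{\Phi}-\sqrt{\mu_p}\pto 0$.
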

\begin{proof}
    See \cite{roos}*{Theorem 1(b)}.
\end{proof}

\bibliographystyle{alpha}
\bibliography{references}

\end{document}